\pgfplotsset{compat=newest}
\definecolor{darkgreen}{rgb}{.39 0.84 0.10}
\newcommand{\bGamma}{\boldsymbol{\Gamma}}
\newcommand{\bgamma}{\boldsymbol{\gamma}}
\newcommand{\tLB}{\emph{LB}}
\newcommand{\tUB}{\emph{UB}}
\newcommand{\I}{\mathcal{I}}
\newtheorem{Theorem}{Theorem}
\newtheorem{Corollary}{Corollary}
\newtheorem{Definition}{Definition}
\newtheorem{Lemma}{Lemma}
\newtheorem{Remark}{Remark}
\newcommand{\R}{\mathbb{R}}
\renewcommand{\P}{\mathcal{P}}
\begin{document}
%\linenumbers
\title{Eigenvalue bounds for preconditioned symmetric multiple saddle-point matrices}
\author{Luca Bergamaschi  \and
\'Angeles Mart\'inez \and John W. Pearson \and Andreas Potschka}
\footnotetext[1]{Department of Civil Environmental and Architectural Engineering, University of Padua, Via Marzolo, 9, 35100 Padua, Italy,
E-mail: \texttt{luca.bergamaschi@unipd.it}}
\footnotetext[2]{Department of Mathematics, Informatics and Geosciences, University of Trieste, Via Alfonso Valerio, 12/1, 34127 Trieste, Italy,
E-mail: \texttt{amartinez@units.it}}
\footnotetext[3]{School of Mathematics, The University of Edinburgh, James Clerk Maxwell Building, The King's Buildings, Peter Guthrie Tait Road, Edinburgh, EH9 3FD, United Kingdom,
E-mail: \texttt{j.pearson@ed.ac.uk}}
\footnotetext[4]{Institute of Mathematics, Clausthal University of Technology, Erzstr.~1, 38678 Clausthal-Zellerfeld, Germany,
E-mail: \texttt{andreas.potschka@tu-clausthal.de}}

\begin{abstract}
We develop eigenvalue bounds for symmetric, block tridiagonal multiple saddle-point linear systems, preconditioned with block diagonal matrices.
	We extend known results for $3 \times 3$ block systems {[Bradley \& Greif, IMA J.\ Numer.\ Anal.\ 43 (2023)]}
	and for $4 \times 4$ systems [Pearson \& Potschka, IMA J.\ Numer.\ Anal.\ 44 (2024)]
	to an arbitrary number of blocks. Moreover, our results generalize the bounds in {[Sogn \& Zulehner, IMA J.\ Numer.\ Anal.\ 39 (2018)]}, developed for an arbitrary number of blocks with null diagonal blocks.  Extension to the bounds when the Schur complements are approximated is also provided, using perturbation arguments. Practical bounds are also obtained for the double saddle-point linear system. Numerical experiments validate our findings.
\end{abstract}
\maketitle
%\date
\textbf{Keywords}: Multiple saddle-point systems; preconditioned iterative methods; MINRES; block diagonal preconditioning; Chebyshev polynomials; eigenvalue bounds

\bigskip

\textbf{AMS Classification}: 65F08, 65F10, 65F50, 49M41
%-----------------------------------------------------
%%%%%%%%%%%%%%%%%%%%%%%%%%%%%%%%%%%%%%%%%%%%%%%
%-----------------------------------------------------
%%\input{intro}

\section{Introduction}
We consider the iterative solution of a block tridiagonal multiple saddle-point linear system $\mathcal{A} w = f$, where
\begin{equation*}%\label{Eq1FULL}
	\mathcal{A} = \begin{bmatrix}
	A_0 & B_1^\top & 0 & \cdots & 0  \\
	B_1 & -A_1& B_2^\top & \ddots & \vdots \\
	0 &  B_2 &A_2  & \ddots & 0 \\
	\vdots	& \ddots & \ddots & \ddots & B_N^\top \\[.3em]
	0	& \cdots & 0 & B_N & (-1)^N A_N
\end{bmatrix}.
\end{equation*}
We assume that $A_0 \in \R^{n_0 \times n_0}$ is symmetric positive definite, all other square block matrices $A_k \in \mathcal{M}_{n_k \times n_k}$ are symmetric positive semi-definite
and $B_k \in \R^{n_k \times n_{k-1}}$ have full rank (for $k = 1, \ldots, N$).
We further assume that ${n_{k} \le n_{k-1}}$ (for $k = 1, \ldots, N$).
These conditions are sufficient (though not necessary) to ensure invertibility of $\mathcal{A}$. Less restrictive conditions,
in the case of double saddle-point matrices, have been discussed, e.g., in \cite{BEIK2024403}.

In this work, we develop eigenvalue bounds for the symmetric multiple saddle-point matrix {$\mathcal{A}$}, preconditioned with block diagonal Schur complement preconditioners.
Linear systems involving matrix {$\mathcal{A}$} arise, for example with $N =2$ (double saddle-point systems), in many scientific applications including constrained least squares problems \cite{Yuan}, constrained quadratic programming \cite{Han}, magma--mantle dynamics \cite{Rhebergen}, 
liquid crystal director modeling \cite{RamGar2023} or in the coupled Stokes--Darcy problem, and the preconditioning of such linear systems has been considered in, e.g., \cite{Balani-et-al-2023a, Balani-et-al-2023b, Szyld, Benzi2018, BeikBenzi2022}.
In particular, block diagonal preconditioners for  matrix {$\mathcal{A}$} have been thoroughly studied in
\cite{Bradley,PPNLAA24}.

More generally, multiple saddle-point linear systems with $N > 2$ have been addressed in \cite{SZ} where the block diagonal matrix with positive definite Schur complements blocks is considered as the preconditioner, and bounds on the condition number
of the preconditioned matrix have been developed. Furthermore, the eigenvalues of the preconditioned matrix
are exactly computed, in the simplified case where $A_k$ are $n_k \times n_k$ zero matrices for $k > 0$, and are related to the zeros
of suitable Chebyshev-like polynomials. A new symmetric positive definite preconditioner
has recently been proposed in \cite{pearson2023symmetric} and successfully tried on constrained optimization problems up to $N=4$. The spectral properties of
the resulting preconditioned matrix have been described in \cite{BMPP_COAP24} for $N=2$. Preconditioning of the $4 \times 4$ saddle-point linear system ($N=3$), although
with a different block structure, has been addressed in  \cite{BSZ2020} to solve a class of optimal control problems.
A practical preconditioning strategy for multiple saddle-point linear systems, based on sparse approximate inverses of the diagonal blocks of the 
block diagonal Schur complement preconditioning matrix, is proposed in \cite{FerFraJanCasTch19}, for the solution of 
coupled poromechanical models and the mechanics of fractured media.	

In this work, we extend the results in \cite{Bradley} for $3 \times 3$ block systems and those for $4 \times 4$ block systems given in \cite{pearson2023symmetric} to an arbitrary number of blocks. Moreover, our results generalize the bounds in \cite{SZ}, developed for arbitrary $N$, but with null diagonal blocks. We are also interested in settings where the (inverses of the) Schur complements are applied approximately, so we extend the eigenvalue bounds to such settings using generic perturbation arguments. In the double saddle-point setting, we may additionally apply the methodology of this work to devise bespoke bounds which reflect the quality of the approximations of the respective Schur complements. Altogether, this paper provides a range of strategies to quantify the spectra of preconditioned systems, with exact and inexact applications of the (inverse) Schur complements.

This paper is structured as follows: We state the block diagonal Schur complement preconditioner in Section \ref{sec:bd_sc_prec}, and relate the eigenvalues of the preconditioned system matrix to the zeros of a family of polynomials defined via a three-term recurrence. We derive bounds for their extremal zeros and, thus, also for the extremal eigenvalues in Section \ref{sec:polynomials}. We provide two approaches for the analysis of approximate versions of the preconditioner in Section \ref{sec:inexact}, one general result based on a backward analysis argument and one refined result for double saddle-point systems. 
For double saddle-point linear systems with approximated Schur complements, the bounds are verified using synthetic test cases, as well as realistic problems arising from PDE-constrained optimization. Some concluding remarks are provided in Section \ref{sec:conc}.

\section{Characterization of the eigenvalues of the block diagonally preconditioned matrix}% $\mathcal{P}_D^{-1} \mathcal{A}$}
\label{sec:bd_sc_prec}
We consider preconditioning a linear system involving $\mathcal A$ as the coefficient matrix, with the block diagonal preconditioner defined as:
\begin{equation*}%\label{Eq2FULL}
	\mathcal{P}_D = \text{blkdiag} \left(S_0, S_1,  \ldots, S_N\right),
	%\mathcal{P}_D = \left[ \begin{array} {ll}
		%\begin{array}{lll}
		%S_0 & & \\
		%& S_1 & \\
		%& & S_2 \end{array} & \text{\Large{O}} \\
		%\text{\Large{O}} &
		%\begin{array}{ll}
			%\hspace{-4mm} \ddots   &\\
		%&  S_N \end{array}
	%\end{array} \right],
\end{equation*}
\[ \text{with} \qquad S_0 = A_0, \qquad S_k = A_k + B_k S_{k-1}^{-1} B_k^\top, \quad k = 1, \ldots, N.\]
The previously mentioned conditions on the rectangular matrices $B_k$ ensure that all Schur complements $S_0, S_1, \ldots, S_N$
are symmetric positive definite and thus invertible. In the following, to simplify the notation, we will denote as $I$ the identity matrix, the dimension of which will be clear
from the context.

To investigate the eigenvalues of $\mathcal{P}_D ^{-1} \mathcal{A}$ we consider instead the similar symmetric matrix:
\begin{align}
	\label{compact}
	&\mathcal{Q} := \mathcal{P}_D ^{-1/2} \mathcal{A} \mathcal{P}_D ^{-1/2}  \\
\nonumber 	&=	\begin{bmatrix}
	I & R_1^\top & 0 & \cdots & 0 \\
	R_1 & -E_1 & R_2^\top & \ddots & \vdots \\
	0 &  R_2 &E_2  & \ddots & 0 \\
	\vdots	& \ddots & \ddots & \ddots & R_N^\top \\[.3em]
	0	& \cdots & 0 & R_N & (-1)^N E_N
	\end{bmatrix}
=
	\label{Ecompact}
	\begin{bmatrix}
	I & R_1^\top & 0 & \dots & 0 \\
	R_1 & -I + R_1 R_1^\top & R_2^\top & \ddots & \vdots \\
	0 &  R_2 &I -R_2R_2^\top   & \ddots & 0 \\
	\vdots	& \ddots & \ddots & \ddots & R_N^\top \\[.3em]
	0	& \cdots & 0 & R_N & (-1)^N (I - R_N R_N^\top )
	\end{bmatrix},
\end{align}
where
\[ R_k = S_{k}^{-1/2} B_k S_{k-1} ^{-1/2}, \qquad E_k =  S_{k}^{-1/2}  A_k  S_{k}^{-1/2}, \]
which implies that
\begin{equation} \label{RRE} 
R_k R_k^\top + E_k = I.
\end{equation}
Componentwise, we write the eigenvalue problem $\mathcal Q u = \lambda u$, with $u = \begin{bmatrix} u_1^\top, \ldots, u_{N+1}^\top\end{bmatrix}^\top$, as
\begin{equation}
	\begin{array}{lclclclcl}
		 		u_1 &{}+& R_1^\top u_2  &{}&&&  & = & \lambda u_1, \\
		 		R_1 u_1 &{}+& (R_1 R_1^\top- I) u_2  &{}+& R_2^\top u_3  && & = & \lambda u_2, \\
		 		&& R_2 u_2 &{}+& (I - R_2 R_2^\top) u_3  &{}+& R_3^\top u_4  & = & \lambda u_3, \\
		&{}&& \vdots &{}& \vdots && \vdots &  \\
				&{}& R_{N-1} u_{N-1} &{}+& (-1)^{N-1}(R_{N-1} R_{N-1}^\top - I) u_{N}  &+& R_N^\top u_{N+1}  & = & \lambda u_{N}, \\%[.3em]
		 		%&{}&& \ldots &{}& \ldots && = &\ldots  \\
		 		&{}&& & R_{N} u_{N} &{}+& (-1)^N(R_{N} R_{N}^\top- I) u_{N+1} & = & \lambda u_{N+1}.
	\end{array}
\label{dpsp}
\end{equation}
The matrices $R_k R_k^\top$ are all symmetric positive definite. We define $\gamma_k$ as a generic value of the
Rayleigh quotient of $R_k R_k^\top$, that is
\[ \gamma_k \equiv \gamma_k(w) = \frac{w^\top R_k R_k^\top w}{w^\top w}, \qquad \text{with} \quad 0 < \gamma_k \le 1, \]
the upper bound arising from the fact that $R_k R_k^\top \preceq I$, due to \eqref{RRE}.
Moreover, we define the vector
\begin{equation*}
	\boldsymbol{\gamma} \equiv  \begin{bmatrix} \gamma_1, \ldots, \gamma_N\end{bmatrix}.
		\end{equation*}
%	\textbf{Example}. Set $n=4$, we have (compare \cref{figroots}).
	%\begin{eqnarray*} J_5^- &=& \left[-2\cos\frac{1}{9} \pi, -2\cos\frac{4}{11} \pi\right] = \left[-1.8794, -0.8308\right] \\
		%J_5^+ &=& \left[2\cos\frac{5}{11} \pi, 2\cos\frac{1}{11} \pi\right] = \left[0.2846, 1.9190\right]  \end {eqnarray*}
In the next section, we will characterize the eigenvalues of the preconditioned matrix $\mathcal Q$ in terms of the zeros of the following sequence of
parametric polynomials:
\begin{Definition}
\begin{align}
	\nonumber U_0(x, \boldsymbol{\gamma}) & =  1, \\
	\nonumber U_1(x, \boldsymbol{\gamma}) & =  x - 1, \\
	\nonumber U_2(x, \boldsymbol{\gamma}) & =  x^2 - \gamma_1 x - 1, \\
%	\ldots & & \ldots\nonumber \\
\label{un} U_{k+1}(x, \boldsymbol{\gamma})  &=  ((-1)^{k+1}(1-\gamma_k) + x) U_k(x, \boldsymbol{\gamma}) - \gamma_k U_{k-1}(x, \boldsymbol{\gamma}), \quad k \ge 1. %\label{Recur_U}.
\end{align}
\end{Definition}

\bigskip

\noindent
	\textbf{Notation}. We will denote as 
$\mathcal{I}(Q)$ the union of the intervals 
containing the zeros of a $\bgamma$-dependent polynomial $Q(x, \bgamma)$ for $\bgamma \in [0, 1]^N$. We use the simplified notation
$\mathcal{I}_k = \mathcal{I}(U_k(\lambda, \boldsymbol{\gamma}))$ to {denote bounds for the roots of polynomials of the form $U_k$} over the
valid range of $\boldsymbol\gamma$.
%{\color{red} [Better to combine Sections 2 and 3?]} 
{With $\sigma(A)$ we denote the spectrum of the square matrix $A$.}

\bigskip

We now define
\[\boldsymbol{\Gamma} = \begin{bmatrix} R_1 R_1^\top, R_2 R_2^\top, \ldots, R_N R_N^\top\end{bmatrix}
	,\]  and a sequence of symmetric
	matrix-valued  functions $\{Z_k\}_{k = 1, \ldots, N+1} \in \R^{n_{k-1} \times n_{k-1}}$.
\begin{Definition}
%%{\color{red} [Should probably not use $n$ for a dimension and an index.]}
\begin{align*}
	%	Z_0(\lambda, \boldsymbol{\Gamma}) & = &  I, \\
	Z_1(\lambda, \boldsymbol{\Gamma}) & =  (\lambda  - 1)I, \\
	Z_2(\lambda, \boldsymbol{\Gamma}) & =  -\frac{R_1 R_1^\top}{\lambda-1} + I - R_1 R_1^\top + \lambda I, \quad \lambda \ne 1 \\
%	\ldots & & \ldots\nonumber \\
	Z_{k+1}(\lambda, \boldsymbol{\bGamma})  &=  -R_k Z_{k}(\lambda, \bGamma)^{-1}R_k^\top + \lambda I +  (-1)^{k+1}(I - R_k R_k^\top), \quad k \ge 1 ~ \emph{ and } ~ \lambda ~ \emph{ s.t. } ~ 0 \not \in \sigma\left(Z_{k}(\lambda, \bGamma)\right).
\end{align*}
\end{Definition}
		We premise a technical lemma whose proof can be found in \cite[Lem. 2.1]{BMPP_COAP24}.
\begin{Lemma}\label{Le1}
        Let $Z$ be a symmetric matrix valued function  defined in  $F \subset \R$, and
        \[0 \notin [\min\{\sigma(Z(\zeta))\},\max\{\sigma(Z(\zeta))\}],\quad {\text{for all } \zeta \in F.}\]
        Then, for arbitrary $s \neq 0$, there exists a vector $v \neq 0$ such that
\begin{equation*}
        \frac{s^\top Z(\zeta)^{-1}s}{s^\top s}=\frac{1}{\gamma_Z},
        \qquad \text{with} \quad \gamma_Z = \frac{v^\top Z(\zeta) v}{v^\top v}.
\end{equation*}
\end{Lemma}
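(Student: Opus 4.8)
The plan is to fix $\zeta \in F$ and reduce the whole statement to a fact about the single symmetric matrix $M := Z(\zeta)$. The hypothesis says that $0$ lies outside $[\min\sigma(M), \max\sigma(M)]$, so $M$ is either positive definite or negative definite; in particular it is invertible, $M^{-1}$ is symmetric with the same (strict) definiteness, and $\sigma(M^{-1}) = \{1/\mu : \mu \in \sigma(M)\}$, all eigenvalues of one common sign. The target then becomes a pure intermediate-value statement: the scalar $q := (s^\top M^{-1} s)/(s^\top s)$ should be the reciprocal of some Rayleigh quotient of $M$.

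First I would locate $q$. Since $M^{-1}$ is symmetric, its Rayleigh quotient at $s$ lies between its extreme eigenvalues, i.e.\ $q \in [\min\sigma(M^{-1}), \max\sigma(M^{-1})]$, which — because all eigenvalues of $M^{-1}$ share a sign — is the interval with endpoints $1/\min\sigma(M)$ and $1/\max\sigma(M)$. Moreover $q \neq 0$ precisely because $M^{-1}$ is definite, so $1/q$ is well defined, and taking reciprocals in the inclusion yields $1/q \in [\min\sigma(M), \max\sigma(M)]$.

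Next I would invoke the elementary fact that the Rayleigh-quotient map $v \mapsto (v^\top M v)/(v^\top v)$ sends $\R^{n_{k-1}} \setminus \{0\}$ onto exactly $[\min\sigma(M), \max\sigma(M)]$: it is continuous on the connected unit sphere and attains $\min\sigma(M)$ and $\max\sigma(M)$ at the corresponding unit eigenvectors, so by the intermediate value theorem its image is that whole closed interval. Applying this with the target value $1/q$ produces a vector $v \neq 0$ with $(v^\top M v)/(v^\top v) = 1/q$, and setting $\gamma_Z := 1/q = (v^\top Z(\zeta) v)/(v^\top v)$ gives $(s^\top Z(\zeta)^{-1} s)/(s^\top s) = q = 1/\gamma_Z$, as required. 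I do not expect a real obstacle here; the only points demanding care are the bookkeeping of the two definiteness cases (so that passing to reciprocals preserves the interval inclusion) and the observation that $q \neq 0$, which is exactly what the hypothesis $0 \notin [\min\sigma(Z(\zeta)), \max\sigma(Z(\zeta))]$ provides. This reproduces the argument of \cite[Lem.~2.1]{BMPP_COAP24}.
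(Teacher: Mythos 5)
Your argument is correct, and it is essentially the standard proof of this statement: the paper itself does not reproduce a proof of \Cref{Le1} but merely cites [BMPP\_COAP24, Lem.~2.1], and the route you take (definiteness of $Z(\zeta)$ from the spectral-interval hypothesis, bounding the Rayleigh quotient of $Z(\zeta)^{-1}$ by its extreme eigenvalues, passing to reciprocals using that all eigenvalues share a sign, then invoking connectedness of the unit sphere and the intermediate value theorem to realize $1/q$ as a Rayleigh quotient of $Z(\zeta)$) is exactly the natural argument one would expect that reference to contain. The two points you flag as needing care — that $q \neq 0$ and that taking reciprocals preserves the interval inclusion — are indeed the only subtleties, and you handle both correctly via the definiteness of $Z(\zeta)^{-1}$.
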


		The next lemma will be used in the proof of the subsequent \Cref{theo2}.
\begin{Lemma}
	\label{lem2}
	For every $u \ne 0$, there is a choice of $\boldsymbol{\gamma}$ for which
	\[ \frac{u^\top Z_{k+1}(\lambda, \bGamma) u}{u^\top u}  = \frac{U_{k+1}(\lambda, \boldsymbol{\gamma}) }{U_{k}(\lambda, \boldsymbol{\gamma})} \qquad {\text{for all }} \lambda \not \in \bigcup_{j=1}^{k} \mathcal I_j. \]
\end{Lemma}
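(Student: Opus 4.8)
The plan is to proceed by induction on $k$, using Lemma 1 to convert the matrix-inverse term $R_k Z_k^{-1} R_k^\top$ appearing in the recursion for $Z_{k+1}$ into a scalar Rayleigh-quotient expression, and then matching the resulting scalar recursion against the three-term recurrence defining $U_{k+1}$.

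Let me sketch the approach. First I would establish the base case: for $k=1$ (and possibly $k=0$), one checks directly that $u^\top Z_1 u / u^\top u = (\lambda - 1) = U_1/U_0$ and $u^\top Z_2 u/u^\top u = U_2/U_1$ for a suitable choice of $\gamma_1$, using the explicit formulas for $Z_1, Z_2$ and $U_0, U_1, U_2$. For the base case of $Z_2$, one writes $u^\top Z_2 u/u^\top u = -\tfrac{1}{\lambda-1} \cdot \tfrac{u^\top R_1 R_1^\top u}{u^\top u} + 1 - \tfrac{u^\top R_1 R_1^\top u}{u^\top u} + \lambda$; setting $\gamma_1$ to be exactly this Rayleigh quotient $u^\top R_1 R_1^\top u / u^\top u \in (0,1]$, one recognizes $-\gamma_1/(\lambda-1) + 1 - \gamma_1 + \lambda = (\lambda^2 - \gamma_1\lambda - 1)/(\lambda-1) = U_2(\lambda,\bgamma)/U_1(\lambda,\bgamma)$.

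For the inductive step, assume the claim holds at level $k$. Fix $u \ne 0$ and apply the $Z_{k+1}$ recursion: $u^\top Z_{k+1}(\lambda,\bGamma) u = -u^\top R_k Z_k(\lambda,\bGamma)^{-1} R_k^\top u + \lambda u^\top u + (-1)^{k+1}\, u^\top (I - R_k R_k^\top) u$. Now I set $s = R_k^\top u$; if $s = 0$ the inverse term vanishes and one argues separately (or by continuity/density), otherwise Lemma 1 applied to $Z(\zeta) = Z_k(\lambda,\bGamma)$ — whose spectrum avoids $0$ precisely because $\lambda \notin \bigcup_{j=1}^{k-1}\mathcal I_j$ guarantees $U_k(\lambda,\bgamma) \ne 0$, hence $Z_k$ is invertible with spectrum bounded away from zero — produces a vector $v \ne 0$ with $s^\top Z_k^{-1} s / s^\top s = 1/\gamma_{Z_k}$ where $\gamma_{Z_k} = v^\top Z_k v / v^\top v$. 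By the inductive hypothesis applied to this $v$, there is a choice of $\bgamma$ (coordinates $\gamma_1,\dots,\gamma_{k-1}$) for which $\gamma_{Z_k} = U_k(\lambda,\bgamma)/U_{k-1}(\lambda,\bgamma)$. Choosing in addition $\gamma_k$ to be the Rayleigh quotient $u^\top R_k R_k^\top u / u^\top u \in (0,1]$, so that $s^\top s = u^\top R_k R_k^\top u = \gamma_k\, u^\top u$ and $u^\top(I - R_k R_k^\top)u = (1-\gamma_k) u^\top u$, I can divide through by $u^\top u$ to get
\[
\frac{u^\top Z_{k+1} u}{u^\top u} = -\gamma_k \cdot \frac{U_{k-1}(\lambda,\bgamma)}{U_k(\lambda,\bgamma)} + \lambda + (-1)^{k+1}(1-\gamma_k) = \frac{U_{k+1}(\lambda,\bgamma)}{U_k(\lambda,\bgamma)},
\]
the last equality being exactly the defining recurrence \eqref{un} for $U_{k+1}$ divided by $U_k$.

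The main obstacle I anticipate is the bookkeeping of the $\bgamma$-choices: the vector $v$ produced by Lemma 1 depends on $\lambda$ and on $u$, and the inductive hypothesis supplies a $\bgamma$ tailored to $v$, so one must check that the freedom in the lower coordinates $\gamma_1,\dots,\gamma_{k-1}$ (fixed via $v$) is compatible with the new top coordinate $\gamma_k$ (fixed via $u$) — these are independent parameters, so this is fine, but it needs to be stated carefully. A secondary technical point is the degenerate case $R_k^\top u = 0$: here the inverse term drops out and the identity must still match $U_{k+1}/U_k$ with $\gamma_k \to 0^+$ in the limit, or one handles it by noting $\gamma_k = 0$ is the limiting value and the polynomial identity still holds by continuity in $\bgamma$ on the closed cube $[0,1]^N$; alternatively, since $\gamma_k>0$ strictly whenever the Rayleigh quotient is taken at $u\ne 0$ with $R_k$ full rank, one only needs $R_k^\top u \ne 0$, which holds as $R_k^\top$ is injective (recall $B_k$ has full rank and $n_k \le n_{k-1}$, so $R_k R_k^\top$ is SPD on $\R^{n_{k-1}}$... wait, $R_k \in \R^{n_k\times n_{k-1}}$, so $R_k^\top$ has full column rank $n_k$ only if $n_k \le n_{k-1}$, giving $R_k^\top$ injective) — so in fact $R_k^\top u \ne 0$ for all $u \ne 0$ and the degenerate case does not arise. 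Finally, one must confirm that the excluded set is correctly $\bigcup_{j=1}^k \mathcal I_j$: invertibility of $Z_k$ with spectrum away from $0$ requires $U_k(\lambda,\bgamma)\ne 0$ for all admissible $\bgamma$, i.e. $\lambda \notin \mathcal I_k$, and the recursion already needed $\lambda \notin \bigcup_{j=1}^{k-1}\mathcal I_j$, so the union is exactly right.
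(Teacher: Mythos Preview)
Your proposal is correct and follows essentially the same induction-plus-Lemma~1 route as the paper's proof: set $w = R_k^\top u$, use Lemma~1 and the inductive hypothesis to turn $w^\top Z_k^{-1} w / w^\top w$ into $U_{k-1}/U_k$, then recognize the recurrence for $U_{k+1}/U_k$. Your extra checks (that $R_k^\top u \ne 0$ by injectivity of $R_k^\top$, and the correct splitting of the excluded set into $\mathcal I_k$ for $U_k \ne 0$ versus $\bigcup_{j\le k-1}\mathcal I_j$ for the inductive hypothesis) are slightly more careful than the paper's version, which leaves these points implicit.
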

\begin{proof}
	This is shown by induction. For $k=0$ we have $\dfrac{u^\top Z_1(\lambda, \boldsymbol{\Gamma}) u}{u^\top u} = \lambda -1 = \dfrac{U_1(\lambda, \bgamma)}{U_0(\lambda, \bgamma)}
	\ \text{ for all } \lambda \in \R$. % \added[id=AP]{for arbitrary $\bgamma$}.
	If $k \ge 1$,  {the condition
	$\lambda \not \in \mathcal I_k$, together with
	the inductive hypothesis 
	$\dfrac{u^\top Z_{k}(\lambda, \bGamma) u}{u^\top u}  = \dfrac{U_{k}(\lambda, \boldsymbol{\gamma}) }{U_{k-1}(\lambda, \boldsymbol{\gamma})}$, implies invertibility of $Z_{k}(\lambda, \bGamma)$. Moreover, {by the resulting definiteness of $Z_k$ this} is equivalent to the condition
	$0 \not \in [\min\{\sigma(Z_{k}(\lambda, \bGamma))\},\max\{\sigma(Z_{k}(\lambda, \bGamma))\}]$,
	which guarantees that \Cref{Le1} can be applied.}
	% {\color{red} [Just to check the result as written allows us to make this choice? That is, what happens if $\lambda$ does not allow this? Also, should we be using $\lambda_{\min}$/$\lambda_{\max}$ and $\lambda$ to mean different things in the same equation? We do use $\lambda_{\min}$/$\lambda_{\max}$ throughout, however.]}.
	Therefore, we can write
	\begin{eqnarray}
		\frac{u^\top Z_{k+1}(\lambda, \bGamma) u}{u^\top u}   & = &
		-\frac{u^\top R_k Z_{k}(\lambda, \bGamma)^{-1}R_k^\top u}{u^\top u} + \lambda
		+  (-1)^{k+1}(1 - \gamma_k)\nonumber \\
		&\underbrace{=}_{\text{\normalsize{$w = R_k^\top u$}}} & -\gamma_k \frac{w^\top Z_{k}(\lambda, \bGamma)^{-1} w}{w^\top w} + \lambda +  (-1)^{k+1}(1 - \gamma_k). \label{Zrec}
	\end{eqnarray}
	% {\color{red} [Just to check the result as written allows us to make this choice? That is, what happens if $\lambda$ does not allow this? Also, should we be using $\lambda_{\min}$/$\lambda_{\max}$ and $\lambda$ to mean different things in the same equation? We do use $\lambda_{\min}$/$\lambda_{\max}$ throughout, however.]}.
		We then apply \Cref{Le1} and the inductive hypothesis to write
		\[ \frac{w^\top Z_{k}(\lambda, \bGamma)^{-1} w}{w^\top w} = \frac
		{U_{k-1}(\lambda, \boldsymbol{\gamma}) }
		{U_{k}(\lambda, \boldsymbol{\gamma})}. \]
		Substituting into \eqref{Zrec} and using the relation \eqref{un} yields the assertion.
\end{proof}

%\subsection{Eigenvalues of $\mathcal {P}_D^{-1} \mathcal{A}$}
\begin{Theorem}
	Any eigenvalue of $\mathcal{P}_D^{-1} \mathcal{A}$ is located in $\displaystyle \bigcup _{k=1}^{N+1} \mathcal{I}_k$. %{\color{red} [TO CHECK: Should upper limit be $N+1$? Probably linked to my previous comment]}.
	\label{theo2}
\end{Theorem}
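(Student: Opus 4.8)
The plan is to reduce the eigenvalue problem $\mathcal Q u = \lambda u$ to a sequence of equations involving the matrix-valued functions $Z_k$, and then to show that any $\lambda$ outside $\bigcup_{k=1}^{N+1}\mathcal I_k$ cannot be an eigenvalue. Suppose for contradiction that $\lambda$ is an eigenvalue of $\mathcal Q$ (equivalently of $\mathcal P_D^{-1}\mathcal A$, by similarity) with eigenvector $u = [u_1^\top,\dots,u_{N+1}^\top]^\top \ne 0$, and that $\lambda \notin \bigcup_{k=1}^{N+1}\mathcal I_k$. First I would show by induction on $k$ that the first $k$ equations of \eqref{dpsp} force $Z_k(\lambda,\bGamma)\,u_k = (\text{terms in } u_{k+1})$; more precisely, that one can eliminate $u_1,\dots,u_{k-1}$ successively to obtain a relation of the form $Z_k(\lambda,\bGamma) u_k + R_k^\top u_{k+1} = 0$ for $k \le N$ (and $Z_{N+1}(\lambda,\bGamma) u_{N+1} = 0$ from the last equation), where the elimination step uses $u_{k-1} = -Z_{k-1}(\lambda,\bGamma)^{-1} R_{k-1}^\top u_k$, valid precisely because $Z_{k-1}(\lambda,\bGamma)$ is invertible. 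This invertibility is the crux: since $\lambda \notin \mathcal I_{k-1} = \mathcal I(U_{k-1})$, Lemma \ref{lem2} tells us that for every $u_{k-1}\ne 0$ there is a choice of $\bgamma$ with $u_{k-1}^\top Z_{k-1}(\lambda,\bGamma) u_{k-1}/(u_{k-1}^\top u_{k-1}) = U_{k-1}(\lambda,\bgamma)/U_{k-2}(\lambda,\bgamma)$, which is nonzero and of constant sign over the relevant $\lambda$-range, giving definiteness and hence invertibility of $Z_{k-1}(\lambda,\bGamma)$.

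Carrying the elimination through all $N$ block rows yields $Z_{N+1}(\lambda,\bGamma)\, u_{N+1} = 0$ with $u_{N+1} \ne 0$ (one must check $u_{N+1}\ne 0$, which follows because if $u_{N+1}=0$ then back-substitution forces $u_N = \dots = u_1 = 0$). But $\lambda \notin \mathcal I_{N+1} = \mathcal I(U_{N+1})$ combined with Lemma \ref{lem2} (applied with $k = N$) shows $u_{N+1}^\top Z_{N+1}(\lambda,\bGamma) u_{N+1}/(u_{N+1}^\top u_{N+1}) = U_{N+1}(\lambda,\bgamma)/U_N(\lambda,\bgamma) \ne 0$ for a suitable $\bgamma$, so $Z_{N+1}(\lambda,\bGamma)$ is definite, hence nonsingular — contradicting $Z_{N+1}(\lambda,\bGamma) u_{N+1} = 0$. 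Therefore every eigenvalue lies in $\bigcup_{k=1}^{N+1}\mathcal I_k$.

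The main obstacle I anticipate is the bookkeeping in the induction establishing the reduced relations $Z_k(\lambda,\bGamma) u_k + R_k^\top u_{k+1} = 0$: one must verify that eliminating $u_{k-1}$ from equation $k$ of \eqref{dpsp} using the relation from equation $k-1$ reproduces exactly the recursive definition $Z_{k+1} = -R_k Z_k^{-1} R_k^\top + \lambda I + (-1)^{k+1}(I - R_k R_k^\top)$, including getting the sign pattern $(-1)^{k+1}$ and the base cases $Z_1, Z_2$ right. A subtle point requiring care is that Lemma \ref{lem2} produces, for each fixed $u$, \emph{some} admissible $\bgamma$; one has to make sure the choice of $\bgamma$ can be made consistently (it suffices to apply the lemma pointwise to conclude definiteness of each $Z_k(\lambda,\bGamma)$ — which is a statement about the actual matrix $\bGamma = [R_1R_1^\top,\dots]$, not the scalar surrogate — at the given $\lambda$), and that the exclusion sets $\mathcal I_j$ for $j < k$ are already accounted for when invoking the lemma at level $k$. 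Handling the degenerate cases where some $U_j(\lambda,\bgamma) = 0$ is avoided at the outset by the assumption $\lambda \notin \bigcup \mathcal I_j$.
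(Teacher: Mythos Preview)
Your proposal is correct and follows essentially the same approach as the paper: inductive elimination of the $u_k$ via the $Z_k$ recursion, with invertibility (indeed definiteness) of each $Z_k$ guaranteed by Lemma~\ref{lem2} whenever $\lambda \notin \mathcal I_k$, terminating in $Z_{N+1} u_{N+1} = 0$ together with the back-substitution argument for $u_{N+1}\ne 0$. The only cosmetic differences are that the paper frames the induction as a dichotomy (either $\lambda \in \mathcal I_k$ or $u_k = Z_k^{-1}R_k^\top u_{k+1}$) rather than a global contradiction, and uses the sign convention $Z_k u_k = R_k^\top u_{k+1}$ (your relation $Z_k u_k + R_k^\top u_{k+1} = 0$ carries an extraneous minus sign, exactly the bookkeeping point you flagged).
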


\begin{proof}
	To show the statement, we define a candidate eigenvalue and prove by induction that for every $k \le N+1$ either
	\[ 	\text{(i)} \ \lambda \in  \mathcal{I}_k \quad \text{or} \quad  \text{(ii)}  \ u_k = Z_k^{-1} R_k^\top u_{k+1}, \]
	and for $k = N+1$ only condition (i) can hold.

	Let $u = \begin{bmatrix} u_1^\top, \ldots, u_{N+1}^\top\end{bmatrix}^\top$ be an eigenvector of $\mathcal Q$, 
		hence satisfying 
		\eqref{dpsp}.
	If $u_2 = 0$ then
$\lambda = 1$ is eigenvalue, clearly
	satisfying $U_1(1, \boldsymbol \gamma) = 0$.
	If $\lambda \ne 1$ we obtain $u_1$ from the first equation of \eqref{dpsp},
	\[ u_1 = \frac{R_1^\top u_2}{\lambda-1 } = Z_1^{-1}R_1^\top u_2,\]
and substitute it into the second one:
\[ \left( -\frac{R_1 R_1^\top}{\lambda-1} + I - R_1 R_1^\top + \lambda I\right)u_2  = R_2^\top u_3,
%\qquad \frac{1}{\lambda -1} \left(\lambda^2 I -  \lambda R_1 R_1^\top - I\right) u_2 = R_2^\top u_3,
	\qquad Z_2(\lambda,\boldsymbol{\Gamma}) u_2 = R_2^\top u_3.\]
	Since by \Cref{lem2} we have that
\[ \frac{u_2^\top Z_2(\lambda,\boldsymbol{\Gamma}) u_2}{u_2^\top u_2} =
	\frac{U_2(\lambda, \boldsymbol{\gamma})}{U_1(\lambda, \boldsymbol{\gamma})},
\]
	 if $u_3 = 0$ then $\lambda$ being an eigenvalue implies that $U_2(\lambda, \boldsymbol{\gamma}) = 0$.
	Otherwise, if $\lambda \not \in \mathcal{I}_2$ then $Z_2$ is definite and hence invertible, therefore we may write
	 \[u_2 = Z_2^{-1} R_2^\top u_3, \]
	 Assume now the inductive hypothesis holds for $k-1$. {If $\lambda \not \in \mathcal{I}_{k-1}$, then we can apply Lemma 2,
	 since in  this case we also have that $Z_{k-1}$ is definite and hence invertible.}
	% then, for any $\lambda \not \in \cup_{j=1}^{k-1} \mathcal{I}_j$
	 We can write
	 \[ u_{k-1} = Z_{k-1}^{-1} R_{k-1}^\top u_{k}. \]
	 Substituting this into the $k$th equation of \eqref{dpsp} yields
	 \[ Z_{k} (\lambda, \boldsymbol{\Gamma}) u_{k} \equiv  \left(-R_{k-1} Z_{k-1}(\lambda, \bGamma)^{-1} R_{k-1}^\top + (-1)^{k} (I - R_{k-1} R_{k-1}^\top) + \lambda I\right)u_{k} = R_{k}^\top u_{k+1}.\]
	 Now, if $\lambda$ is an eigenvalue with
$u_{k+1}=0$, from 
	 \[ 0 = \frac{u_k^\top Z_{k}(\lambda, \boldsymbol{\Gamma}) u_{k}}{u_{k}^\top u_{k}} = \frac{U_k(\lambda, \boldsymbol{\gamma})}{U_{k-1}(\lambda, \boldsymbol{\gamma})}
\]
	we have $U_k(\lambda, \boldsymbol{\gamma}) = 0$ and
hence $\lambda \in \mathcal{I}_{k}$.
	 %Now, if $u_{k+1} = 0$, from
	%{\color{red} it holds that if $\lambda$ is eigenvalue of \eqref{dpsp} then $U_k(\lambda, \boldsymbol{\gamma}) = 0$, which implies that
	%$\lambda \in \mathcal{I}_{k}$ [JP: I don't see this, for similar reasons as previously discussed]}.
	Otherwise
	 for any $\lambda \not \in \mathcal{I}_{k}$ we may write 
	 \[ u_{k} = Z_{k}^{-1} R_{k}^\top u_{k+1}. \]
	 The induction process ends for $k = N+1$. In this case, from
	\[ Z_{N+1}  u_{N+1} = 0,\]
	 we have that condition (i) holds, noticing that $u_{N+1}$  cannot be zero, as this would imply that $0 = u_{N+1} = u_{N}
	 = \ldots = u_1$, contradicting the definition of an eigenvector.
\end{proof}
%\end{document}

%%\textcolor{magenta} {Summarizing, the bounds for $U_5$ are given by the extremal roots of $T_5, V_5$ in addition
%to the bounds regarding the lowest order polynomials $U_4, U_3, \ldots$.  }

%	\bigskip

\section{Bounds for the extremal zeros}
\label{sec:polynomials}

Theorem \ref{theo2} guides us that to obtain eigenvalues of our preconditioned system of interest, we should establish appropriate bounds for the intervals $\mathcal{I}_k$, which we accomplish in this section by considering suitable sequences of polynomials of the general form $U_k$.
        
We start by denoting as $\boldsymbol 1_j$ a vector of ones of length $j$,
	and consider two particular cases of the polynomials defined in \eqref{un}: %{\color{red} %[JP: Question: are we happy with the notation $T_*$, as this is usually used to denote Chebyshev polynomials of the first kind, which these are not?]}
\begin{Definition}
	\label{defTV} \,
\begin{enumerate}
	\item[\emph{(i)}] $P_k(x) \equiv U_k(x, \boldsymbol{1}_{k-1})$,
	\item[\emph{(ii)}] $V_k(x) \equiv U_k(x,\begin{bmatrix} 0, \boldsymbol{1}_{k-2}\end{bmatrix})$.
			\end{enumerate}
\end{Definition}
			In case (i), the recurrence relation defines the same polynomials
			considered in \cite[Sec. 2 \& App. A]{SZ} where it is shown that the zeros of $P_k(x)$ are
			\[ p_j = 2 \cos \left (\frac{(2j-1)\pi} {2k+1}\right), \quad j =1, \ldots, k.\]
			For case (ii), we show by induction that
				\begin{equation}\label{Vrecur}
				V_{k+1}(x)  = (-1)^{k} (x-1) P_k(-x), \quad {k \ge 1.}%k = 1, \ldots, N-1.
                \end{equation}
				In fact,
			\begin{align*}
				V_2(x) &= x^2 -1  = -(x-1) P_1(-x)\\
				V_3(x) &= x (x^2-1) - (x-1) = (x-1) (x^2 + x -1) =			(x-1) P_2(-x).
			\end{align*}
			Supposing that \eqref{Vrecur} defines polynomials $V_*$ up to and including $V_k$, then
			\begin{align*} V_{k+1}(x)  &=  x V_k(x) - V_{k-1}(x) =
				(-1)^{k+1} x (x-1) P_{k-1}(-x) - (-1)^k (x-1)P_{k-2}(-x)  \\
				&= (-1)^k (x-1)\left(-x P_{k-1}(-x) - P_{k-2}(-x) \right)
			= (-1)^k (x-1) P_{k}(-x).\end{align*}
			Hence the zeros of $V_{k+1}(x)$ are defined by the set $\{1, v_1, \ldots, v_k\}$ where
			\[ v_j = -2 \cos \left (\frac{(2j-1)\pi} {2k+1}\right), \quad j = 1, \ldots, k.\]

	\begin{Remark}
		In recurrence \eqref{un}, if $\gamma_k = 0$  the roots of $U_{k+1}$ are the roots
		of $U_k$ as well as $1$ or $-1$.
		\label{rem}
	\end{Remark}

We are interested in finding bounds on the zeros of $U_{k+1}(x, \boldsymbol{\gamma}), \ k = 0, \ldots, N-1$.
To this end, we could directly use a recent result in \cite[Lem. 2.2]{BMPP_COAP24}. However, we find it more elegant for our purposes here to use the following result, which can be used as an alternative route to prove~\cite[Lem. 2.2]{BMPP_COAP24}:

\begin{Lemma} \label{lem:how_roots_move}
  Let $d \in \mathbb{N}$ and let $q: \R \times \R^d \to \R$ be continuously differentiable. If $(\lambda^\ast$, $\boldsymbol{\gamma}^\ast) \in \R \times \R^d$ satisfy
  \[
    q(\lambda^\ast, \boldsymbol{\gamma}^\ast) = 0 \quad \text{and} \quad
    \frac{\partial q}{\partial \lambda}(\lambda^\ast, \boldsymbol{\gamma}^\ast) \neq 0,
  \]
  then there exists a neighborhood $V \subset \R^d$ of $\boldsymbol{\gamma}^\ast$ and a continuously differentiable function $\lambda: V \to \R$ such that $\lambda(\boldsymbol{\gamma}^\ast) = \lambda^\ast$ and for all $\boldsymbol{\gamma} \in V$
  \begin{subequations}
    \begin{align}
      \label{eqn:implicit_zero}
      q(\lambda(\boldsymbol{\gamma}), \boldsymbol{\gamma}) &= 0, \\
      \label{eqn:lam_gamma}
      \lambda(\boldsymbol{\gamma}) &= \lambda^\ast - \sum_{j=1}^d \frac{\frac{\partial q}{\partial \gamma_j}(\lambda^\ast, \boldsymbol{\gamma}^\ast)}{\frac{\partial q}{\partial \lambda}(\lambda^\ast, \boldsymbol{\gamma}^\ast)} (\gamma_j - \gamma^\ast_j) + o\left(\left\lVert \boldsymbol{\gamma} - \boldsymbol{\gamma}^\ast \right\rVert \right).
    \end{align}
  \end{subequations}
\end{Lemma}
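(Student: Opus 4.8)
The plan is to recognize this as a direct application of the implicit function theorem followed by a first-order Taylor expansion. First I would observe that the hypotheses --- $q$ continuously differentiable, $q(\lambda^\ast, \boldsymbol{\gamma}^\ast) = 0$, and $\frac{\partial q}{\partial \lambda}(\lambda^\ast, \boldsymbol{\gamma}^\ast) \neq 0$ --- are precisely what the implicit function theorem requires in order to solve the scalar equation $q(\lambda, \boldsymbol{\gamma}) = 0$ locally for $\lambda$ as a function of $\boldsymbol{\gamma}$, treating the whole vector $\boldsymbol{\gamma} \in \R^d$ as the parameter. Thus there exist a neighborhood $V \subset \R^d$ of $\boldsymbol{\gamma}^\ast$ and a continuously differentiable map $\lambda : V \to \R$ with $\lambda(\boldsymbol{\gamma}^\ast) = \lambda^\ast$ and $q(\lambda(\boldsymbol{\gamma}), \boldsymbol{\gamma}) = 0$ for all $\boldsymbol{\gamma} \in V$, which is exactly \eqref{eqn:implicit_zero}. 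Shrinking $V$ if necessary, we may also assume $\frac{\partial q}{\partial \lambda}(\lambda(\boldsymbol{\gamma}), \boldsymbol{\gamma}) \neq 0$ throughout $V$ by continuity.

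Next, to obtain the quantitative expansion \eqref{eqn:lam_gamma}, I would differentiate the identity $q(\lambda(\boldsymbol{\gamma}), \boldsymbol{\gamma}) = 0$ with respect to each component $\gamma_j$ using the chain rule, giving
\[
  \frac{\partial q}{\partial \lambda}(\lambda(\boldsymbol{\gamma}), \boldsymbol{\gamma}) \, \frac{\partial \lambda}{\partial \gamma_j}(\boldsymbol{\gamma}) + \frac{\partial q}{\partial \gamma_j}(\lambda(\boldsymbol{\gamma}), \boldsymbol{\gamma}) = 0 .
\]
Evaluating at $\boldsymbol{\gamma} = \boldsymbol{\gamma}^\ast$, where $\lambda(\boldsymbol{\gamma}^\ast) = \lambda^\ast$, and dividing by the nonzero quantity $\frac{\partial q}{\partial \lambda}(\lambda^\ast, \boldsymbol{\gamma}^\ast)$ yields the partial derivatives of $\lambda$ at $\boldsymbol{\gamma}^\ast$ in closed form,
\[
  \frac{\partial \lambda}{\partial \gamma_j}(\boldsymbol{\gamma}^\ast) = - \frac{\frac{\partial q}{\partial \gamma_j}(\lambda^\ast, \boldsymbol{\gamma}^\ast)}{\frac{\partial q}{\partial \lambda}(\lambda^\ast, \boldsymbol{\gamma}^\ast)} , \qquad j = 1, \ldots, d .
\]

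Finally, since $\lambda$ is continuously differentiable on $V$, Taylor's theorem with a little-$o$ remainder gives $\lambda(\boldsymbol{\gamma}) = \lambda^\ast + \sum_{j=1}^d \frac{\partial \lambda}{\partial \gamma_j}(\boldsymbol{\gamma}^\ast)(\gamma_j - \gamma_j^\ast) + o(\lVert \boldsymbol{\gamma} - \boldsymbol{\gamma}^\ast \rVert)$ as $\boldsymbol{\gamma} \to \boldsymbol{\gamma}^\ast$; substituting the expressions for the partial derivatives obtained above produces \eqref{eqn:lam_gamma}. There is no genuine obstacle here --- the argument is entirely standard --- but I would be careful about two points: that the mere $C^1$ regularity of $q$ (hence of $\lambda$) is what forces the error term to be $o(\lVert \boldsymbol{\gamma} - \boldsymbol{\gamma}^\ast \rVert)$ rather than a quadratic $O(\lVert \boldsymbol{\gamma} - \boldsymbol{\gamma}^\ast \rVert^2)$ term, and that the implicit function theorem is being used in its scalar-equation form where the distinguished variable is $\lambda$ and the multidimensional parameter is $\boldsymbol{\gamma}$.
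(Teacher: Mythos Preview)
Your proposal is correct and follows essentially the same approach as the paper: apply the Implicit Function Theorem to obtain $\lambda(\boldsymbol{\gamma})$ and the derivative formula $\frac{\partial \lambda}{\partial \gamma_j} = -\frac{\partial q/\partial \gamma_j}{\partial q/\partial \lambda}$, then Taylor-expand to first order. The paper's version is slightly terser, but the logic is identical.
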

\begin{proof}
  The Implicit Function Theorem delivers the existence of $V$ and $\lambda: V \to \R$ satisfying~\eqref{eqn:implicit_zero} and
  \begin{equation} \label{eqn:dlam_dgamma}
    \frac{\partial \lambda}{\partial \gamma_j}(\lambda(\boldsymbol{\gamma}), \boldsymbol{\gamma}) = \left. -\frac{\partial q}{\partial \gamma_j}(\boldsymbol{\gamma}) \middle/ \frac{\partial q}{\partial \lambda}(\lambda(\boldsymbol{\gamma}), \boldsymbol{\gamma}) \right. \quad \text{for all } j = 1, \dotsc, d.
  \end{equation}
  Taylor expansion of $\lambda(\boldsymbol{\gamma})$ around $\boldsymbol{\gamma}^\ast$ together with~\eqref{eqn:dlam_dgamma} yields~\eqref{eqn:lam_gamma}.
\end{proof}

  {The next lemma shows that all zeros of $U_{k+1}(\lambda, \boldsymbol{\gamma})$ are real and distinct for each combination of the
  parameters with $\gamma_k \in (0,1], \ k =1, \ldots, N$. This guarantees that the  hypothesis $\frac{\partial U_{k+1}(\lambda, \boldsymbol{\gamma})}{\partial \lambda }(\xi) \ne 0$, with
  $\xi$ a zero of $U_{k+1}(\lambda, \boldsymbol{\gamma})$, is always satisfied,
  allowing us to use Lemma \ref{lem:how_roots_move} to  show
  that the extremal values of the zeros of $U_{k+1}(x, \boldsymbol{\gamma})$
  are obtained at the boundary values of $\gamma_k$, namely either $0$ or $1$.
  \begin{Lemma}
	\label{lemma:real_roots}
%	Let $\xi \ne \pm 1$ be a zero of $U_{k+1}$, then %the following statements  hold:
%	\begin{equation*}
%		\frac{\partial U_{k+1}(\lambda, \boldsymbol{\gamma})}{\partial \lambda }(\xi) \ne 0. \label{PartialLambda}
%		%\frac{\partial U_{k+1}(\lambda, \boldsymbol{\gamma})}{\partial \gamma_k }(\xi) &\ne& 0. \label{PartialGamma}  \\
%		%\textcolor{blue}{	\frac{\partial U_{k+1}(\lambda, \boldsymbol{\gamma})}{\partial \gamma_j }(\xi)} &\ne&
%		%\textcolor{blue}{0, \ j = 1, \ldots, k-1. \label{PartialGammaj} }
%	\end{equation*}
	  {
	  Let $\gamma_k \in (0,1]$ for $k =1, \ldots, N$. Then the  polynomial $U_{k+1}(\lambda, \boldsymbol{\gamma})$ has
	  $k+1$ real and distinct roots.}
\end{Lemma}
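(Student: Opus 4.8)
The plan is to realize $U_{k+1}(\cdot,\boldsymbol{\gamma})$ as the characteristic polynomial of a real symmetric tridiagonal (Jacobi) matrix that is \emph{unreduced} whenever $\gamma_1,\dots,\gamma_k>0$, and then invoke the classical spectral theory of such matrices. First, an immediate induction on \eqref{un} shows that each $U_k(\cdot,\boldsymbol{\gamma})$ is monic of degree $k$: this is clear for $U_0,U_1,U_2$, and the recurrence multiplies the monic degree-$k$ polynomial $U_k$ by the monic linear factor $x+(-1)^{k+1}(1-\gamma_k)$ and subtracts $\gamma_k U_{k-1}$, which has degree $k-1$ and hence does not affect the leading coefficient.

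Next, fix $\boldsymbol{\gamma}\in(0,1]^N$ and define the $(k+1)\times(k+1)$ symmetric tridiagonal matrix $J_{k+1}$ with diagonal entries $a_0=1$, $a_j=(-1)^j(1-\gamma_j)$ for $j=1,\dots,k$, and off-diagonal entries $\beta_j=\sqrt{\gamma_j}$ for $j=1,\dots,k$; note $\beta_j$ is real and nonzero precisely because $\gamma_j>0$. Expanding $\det(\lambda I - J_{k+1})$ along its last row gives $\det(\lambda I - J_{k+1}) = (\lambda - a_k)\det(\lambda I - J_k) - \beta_k^2\det(\lambda I - J_{k-1})$, and substituting $a_k=(-1)^k(1-\gamma_k)$ and $\beta_k^2=\gamma_k$ turns this into exactly \eqref{un}. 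Since the base cases match ($\det(\lambda I - J_1)=\lambda-1=U_1$ and $\det(\lambda I - J_2)=\lambda^2-\gamma_1\lambda-1=U_2$), induction yields $U_{k+1}(\lambda,\boldsymbol{\gamma})=\det(\lambda I - J_{k+1})$. It then suffices to invoke the classical fact that an unreduced real symmetric tridiagonal matrix (all sub-diagonal entries nonzero) has $n$ real and pairwise distinct eigenvalues: since $\gamma_j\in(0,1]$ forces $\beta_j\ne 0$, the matrix $J_{k+1}$ is unreduced, hence has $k+1$ distinct real eigenvalues, which are precisely the zeros of $U_{k+1}(\lambda,\boldsymbol{\gamma})$. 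A self-contained alternative, more in the spirit of the rest of the paper, is an induction on $k$ establishing that the zeros of $U_{k-1}$, $U_k$, $U_{k+1}$ strictly interlace: evaluating \eqref{un} at the roots $\eta_1<\cdots<\eta_k$ of $U_k$ gives $U_{k+1}(\eta_i)=-\gamma_k U_{k-1}(\eta_i)$, which by the inductive interlacing hypothesis alternates in sign because $\gamma_k>0$; combined with $U_{k+1}(\lambda)\to\pm\infty$ as $\lambda\to\pm\infty$, this locates $k+1$ simple real roots, one below $\eta_1$, one in each interval $(\eta_i,\eta_{i+1})$, and one above $\eta_k$.

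I do not expect a deep difficulty here; the points demanding care are the sign bookkeeping in the diagonal of $J_{k+1}$ — the alternating factor $(-1)^j$ must be matched to the $(-1)^{k+1}$ in \eqref{un}, while the separately prescribed value $a_0=1$ amounts to formally setting $\gamma_0=0$ — and, along the self-contained route, ensuring that consecutive interlacing roots never collide, which is exactly what carrying the \emph{strict} interlacing property through the induction guarantees. Note also that only $\gamma_j>0$ is actually used, not the upper bound $\gamma_j\le 1$.
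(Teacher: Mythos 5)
Your proof is correct, and your primary route is genuinely different from the paper's proof of this lemma. The paper works purely with the polynomial recurrence: it introduces the Wronskian-like sequence $a_{k+1} = U_k U'_{k+1} - U'_k U_{k+1}$, proves it satisfies $a_{k+1} = U_k^2 + \gamma_k a_k$ with $a_1 = 1$ (hence is strictly positive), and then uses positivity of $a_{k+1}$ at the roots of $U_k$ to deduce that $U_{k+1}$ alternates in sign there, giving $k+1$ Bolzano intervals. Your Jacobi-matrix argument instead identifies $U_{k+1}(\cdot, \boldsymbol{\gamma})$ as $\det(\lambda I - J_{k+1})$ for an explicit unreduced real symmetric tridiagonal matrix and cites the classical fact that such matrices have simple real spectrum; the bookkeeping on the diagonal signs and the check that $\beta_j = \sqrt{\gamma_j} \neq 0$ are exactly the points that need care, and you have them right. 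What is notable is that the paper does build essentially the same tridiagonal matrix, but only later (in Theorem \ref{thm:zeros_of_U}, with the diagonal negated so that $U_{k+1}(x) = \chi_{k+1}(-x)$) and only for $\boldsymbol{\gamma} \in \{0,1\}^k$ in order to compute the zeros exactly; your observation shows this linear-algebraic device could have been introduced earlier to dispose of Lemma \ref{lemma:real_roots} at once, at the cost of appealing to a standard external result rather than a self-contained recurrence argument. Your sketched alternative (carrying strict interlacing of $U_{k-1}, U_k, U_{k+1}$ through the induction via $U_{k+1}(\eta_i) = -\gamma_k U_{k-1}(\eta_i)$) is essentially the paper's argument stripped of the $a_k$ bookkeeping, and is also sound. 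Your closing remark that only $\gamma_j > 0$, not $\gamma_j \le 1$, is used is accurate and matches the paper.
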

}
\begin{proof}
	{
	We first show that the sequence of polynomials of degree $2k$, $a_{k+1}(\lambda, \boldsymbol \gamma) =  U_k(\lambda, \boldsymbol \gamma) U'_{k+1}(\lambda, \boldsymbol \gamma) -U'_k(\lambda, \boldsymbol \gamma)U_{k+1}(\lambda, \boldsymbol \gamma)$ is always positive 
	for every choice of $\boldsymbol \gamma$.
	{We first observe that the highest order monomial in $a_{k+1}(\lambda, \bgamma)$ is $x^k (k+1) x^k - k x^{k-1} x^{k+1} = x^{2k}$, which shows that $a_{k+1}$ is a monic polynomial.}
	We set, for brevity, $c_{k+1} = (-1)^{k+1}(1-\gamma_k)$, and omit the parameter $\boldsymbol \gamma$ for the rest of the proof.  
	From \eqref{un} we can write
	\begin{align*} U_{k+1}(\lambda) &= (c_{k+1} + \lambda)  U_k(\lambda) - \gamma_k U_{k-1}(\lambda), \\
	U'_{k+1}(\lambda)  &= U_k(\lambda) + (c_{k+1} + \lambda)  U'_k(\lambda) - \gamma_k U'_{k-1}(\lambda). \end{align*}
	Now, multiplying the first of the previous by $U'_{k}(\lambda)$ and the second by $U_{k}(\lambda)$ yields
	\begin{align*} U'_k(\lambda)U_{k+1}(\lambda) &= (c_{k+1} + \lambda)  U_k'(\lambda) U_k(\lambda) - \gamma_k  U_k'(\lambda) U_{k-1}(\lambda), \\
	U_k(\lambda) U'_{k+1}(\lambda)  &= U_k(\lambda)^2 + (c_{k+1} + \lambda)  U_k(\lambda) U'_k(\lambda) - \gamma_k U_k(\lambda) U'_{k-1}(\lambda). \end{align*}
	Taking the difference between the expressions provides
	\[	a_{k+1}(\lambda) = U_k(\lambda)^2 + \gamma_k a_k(\lambda).  \]
	Since $a_1 = 1$, by induction the sequence of polynomials $\{a_k(\lambda)\}$ is positive for all $\lambda \in \R$.
	}

	{We now proceed by induction to prove the statement, which we assume to be true for index $k$, namely that there are real numbers
	\[ \xi_1^{(k)} <  \xi_2^{(k)} <  \ldots <  \xi_k^{(k)}, \]
	such that $U_k(\xi_j^{(k)}) = 0, \ j = 1, \ldots, k$.
%	\deleted[id=AP]{Consider the case $k$ even with $k = 2n$.} 
	Since $U_k(\lambda)$ is a monic polynomial it turns out that {$(-1)^i U_k'(\xi^{(k)}_{k-i}) > 0$ for $i = 0, \dotsc, k-1$.}
	%\deleted[id=AP]{}
%	\[ 
	%\deleted[id=AP]{U_k'(\xi_{2i-1}^{(k)}) < 0 \quad \text{and} \quad  U_k'(\xi_{2i}^{(k)}) > 0, \quad i = 1, \ldots, n.} \]
	For every $j$ it holds
	\[ 0 < a_{k+1}(\xi_j^{(k)}) =  U_k(\xi_j^{(k)}) U'_{k+1}(\xi_j^{(k)}) -U'_k(\xi_j^{(k)})U_{k+1}(\xi_j^{(k)}) =
	-U'_k(\xi_j^{(k)})U_{k+1}(\xi_j^{(k)}), \]
	which gives immediately the alternating signs of the values of the polynomial $U_{k+1}(\lambda)$ at the roots of $U_k(\lambda)$
	and $k+1$ intervals in which we may apply Bolzano's theorem. 
%	\deleted[id=AP]{Below we report those intervals, and the sign of $U_{k+1}(\lambda)$
	%at the endpoints of them:}
	%\[ \deleted[id=AP]{\begin{array}{ccccccccc}
		%(-\infty, & \xi_1^{(k)})& (\xi_1^{(k)}, &\xi_2^{(k)})& \ldots& (\xi_{k-1}^{(k)}, &\xi_k^{(k)})& (\xi_k^{(k)}, & +\infty) \\
		%- &  + & + & - & \ldots & + & - & - & + 
	%\end{array}} \]
	%\deleted[id=AP]{The statement in the  case of odd $k$ is analogously proved.}
	}
	\end{proof}

	{The following Lemma states that at a a root $\xi$, the polynomial $U_{k+1}$ is frequently a monotonic function in all its parameters $\gamma_*$, and if a derivative of $U_{k+1}$ with respect to $\gamma_j$ vanishes, then $\xi$ is also a zero of a lower-order polynomial in the sequence.}
	{We again suppress the $\bgamma$-dependency in the arguments for better readability.}
	\begin{Lemma} 
		{Let $0 < j \le k$, $\gamma_i \in (0, 1]^N$ for $i = j, \dotsc, k$, and $\xi$ be a zero of $U_{k+1}$.}{Let $\xi$ be a zero of $U_{k+1}$ (for $j \le k$) {and $\gamma_i \in (0,1]$ (for $j \le i \le k $).}} Then either
		$\frac {\partial U_{k+1}}{\partial \gamma_j}(\xi)  \ne 0 $ or $\xi$ is a zero of a lower-order polynomial $U_s$ (with 
		$s \le k$). %{\color{red} [$s \leq k$ if first comment holds.]} %[Do we need to fix $\gamma_j \in (0,1]$ for $j \le k$, to avoid dividing by zero?]}
%		   \frac{1}{U_k(\xi)}\gamma_k \ldots \gamma_{j+1} \left((-1)^j U_j(\xi) - U_{j-1}(\xi)\right) U_{j}(\xi) \]
	\end{Lemma}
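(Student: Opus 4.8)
The plan is to differentiate the three-term recurrence \eqref{un} with respect to $\gamma_j$ and track how this derivative propagates up the sequence, exactly mirroring the structure of the proof of \Cref{lemma:real_roots}. Writing $\dot U_m \equiv \frac{\partial U_m}{\partial \gamma_j}$ and again abbreviating $c_{m+1} = (-1)^{m+1}(1-\gamma_m)$, we have $\dot U_m = 0$ for $m < j$ (these polynomials do not involve $\gamma_j$), while the first affected index gives, from \eqref{un} with $k = j$,
\begin{equation*}
  \dot U_{j+1} = -U_j(\lambda) \quad\text{plus a term }(-1)^{j+1} U_j\text{-correction?}
\end{equation*}
so more carefully: since $U_{j+1} = ((-1)^{j+1}(1-\gamma_j) + \lambda)U_j - \gamma_j U_{j-1}$, differentiating in $\gamma_j$ yields $\dot U_{j+1} = -(-1)^{j+1} U_j - U_{j-1}$. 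For $m > j$ we get the homogeneous-type recurrence $\dot U_{m+1} = (c_{m+1} + \lambda)\dot U_m - \gamma_m \dot U_{m-1}$, i.e.\ the \emph{same} recurrence as $U$ itself but with different initial data at index $j$. The idea is then to form a Wronskian-like combination $b_{m+1}(\lambda) := U_m(\lambda)\dot U_{m+1}(\lambda) - U'_m(\lambda) \dot U_{m+1}(\lambda)$ — wait, the natural object is $b_{m+1} := \dot U_m U_{m+1} - U_m \dot U_{m+1}$ or similar — and show by the same multiply-and-subtract trick used for $a_{k+1}$ that it satisfies $b_{m+1} = (\text{something}) + \gamma_m b_m$, giving a sign-definiteness statement from which $\dot U_{k+1}(\xi) = 0$ forces $\xi$ to be a common root of two consecutive polynomials $U_s, U_{s-1}$ in the chain, hence (by \eqref{un}) a root of all lower ones down to index $j$, in particular of $U_s$ with $s \le k$.

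Concretely, first I would establish the base identity at index $j$: show that $\dot U_{j+1}(\xi) = 0$ together with $U_{k+1}(\xi) = 0$ already implies what we want, or else $\dot U_{j+1}(\xi) \neq 0$ and we propagate. Then, assuming inductively that $\dot U_m(\xi) \neq 0$ for $j < m \le k$, I would use the cross-multiplication of $U_{k+1} = (c_{k+1}+\lambda)U_k - \gamma_k U_{k-1}$ and $\dot U_{k+1} = (c_{k+1}+\lambda)\dot U_k - \gamma_k \dot U_{k-1} - [j=k](-1)^{j+1}U_j - [j=k]U_{j-1}$ to derive a recurrence for $b_{k+1} := U_{k-1}\dot U_k - U_k \dot U_{k-1}$ (or the analogous antisymmetric pairing), obtaining $b_{k+1} = U_k \dot U_k + \gamma_k b_k$ type formula, hmm — the precise pairing that telescopes cleanly needs to be pinned down. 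The cleanest route: define $W_{m} := U_{m-1}\dot U_{m} - U_{m}\dot U_{m-1}$; plugging the two recurrences in and simplifying should give $W_{m+1} = -U_m \dot U_m \cdot 0 + \gamma_m W_m$ up to the inhomogeneous kick at $m=j$, i.e.\ $W_{m+1} = \gamma_m W_m$ for $m > j$ and $W_{j+1} = U_j^2 \cdot(\pm 1)$ from the initial kick. Since $\gamma_m > 0$, $W_{m+1}$ has constant sign and vanishes only where $U_j(\xi) = 0$. Then at a root $\xi$ of $U_{k+1}$, if $\dot U_{k+1}(\xi) = 0$ we combine $U_k(\xi)\dot U_{k+1}(\xi) = 0$ with $W_{k+1}(\xi) = U_k(\xi)\dot U_{k+1}(\xi) - U_{k+1}(\xi)\dot U_k(\xi) = 0$ (using $U_{k+1}(\xi)=0$), forcing $W_{k+1}(\xi)=0$, hence $U_j(\xi)=0$ with $j \le k$, which is the conclusion.

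The main obstacle I anticipate is \emph{bookkeeping the inhomogeneous term} that appears only at the single index $m = j$, and choosing the antisymmetric pairing $W_m$ so that this term produces a clean sign-definite seed (like $U_j(\xi)^2$ or $P_s(\xi)^2$) rather than an indefinite mess; this is where the analogy with $a_{k+1} = U_k^2 + \gamma_k a_k$ in \Cref{lemma:real_roots} must be adapted carefully, because there the seed came from the $U_k(\lambda)^2$ produced by differentiating the leading factor in $\lambda$, whereas here it comes from differentiating the coefficient $(-1)^{j+1}(1-\gamma_j)$ and the coefficient $-\gamma_j$ of $U_{j-1}$. A secondary subtlety is that the conclusion only claims $\xi$ is a zero of \emph{some} lower-order $U_s$ with $s \le k$, so I do not actually need the full strength of "$U_j(\xi)=0$"; it suffices to trace the vanishing of $W_{k+1}$ back to the vanishing of a single earlier polynomial in the chain, and then invoke \eqref{un} repeatedly. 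I would close by remarking that since \Cref{lemma:real_roots} guarantees $U_{k+1}$ has simple roots when all $\gamma_i \in (0,1]$, the alternative "$\xi$ is a zero of a lower-order polynomial" is genuinely the only obstruction to strict monotonicity of $\xi$ in $\gamma_j$ — which is precisely the hook needed for applying \Cref{lem:how_roots_move} to push the extremal roots to the boundary $\gamma_j \in \{0,1\}$.
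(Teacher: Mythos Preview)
Your Wronskian approach is exactly the paper's: defining $W_{m+1} = U_m \dot U_{m+1} - U_{m+1}\dot U_m$ and showing $W_{m+1} = \gamma_m W_m$ for $m > j$ is precisely the paper's $\widehat a_{m+1}$ recurrence. And you are right that at a root $\xi$ of $U_{k+1}$ with $\dot U_{k+1}(\xi) = 0$, one gets $W_{k+1}(\xi) = 0$, hence $W_{j+1}(\xi) = 0$ since all $\gamma_m > 0$.

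The gap is in your seed computation. Since $\dot U_j = 0$ and $\dot U_{j+1} = (-1)^j U_j - U_{j-1}$, one has
\[
  W_{j+1} \;=\; U_j\,\dot U_{j+1} \;-\; U_{j+1}\,\dot U_j
  \;=\; U_j\bigl((-1)^j U_j - U_{j-1}\bigr),
\]
which is \emph{not} $\pm U_j^2$. So $W_{j+1}(\xi) = 0$ yields two cases: either $U_j(\xi) = 0$ (and you are done), or $(-1)^j U_j(\xi) = U_{j-1}(\xi)$ with $U_j(\xi) \neq 0$. Your sketch does not handle this second case, and it cannot be dismissed: it is precisely where the conclusion ``$\xi$ is a root of some lower-order $U_s$'' is nontrivial.

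The paper closes this gap by a factorization argument. When $(-1)^j U_j(\xi) = U_{j-1}(\xi)$, the recurrence at index $j$ collapses to $U_{j+1}(\xi) = (\xi + (-1)^{j+1}) U_j(\xi)$, i.e.\ $U_{j+1}(\xi) = U_1(\xi) U_j(\xi)$ or $U_{j+1}(\xi) = \frac{U_2(\xi,0)}{\xi - 1} U_j(\xi)$ according to the parity of $j$. Feeding this back into the three-term recurrence for $m = j+1, j+2, \dotsc, k$ (with $\gamma_m > 0$), one shows by induction that $U_{k+1}(\xi)$ factors as $U_j(\xi)$ times a polynomial of the form $U_{k+1-j}(\xi; \gamma_{j+1}, \dotsc, \gamma_k)$ (or $\frac{1}{\xi-1} U_{k+2-j}(\xi; 0, \gamma_{j+1}, \dotsc, \gamma_k)$). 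Since $U_{k+1}(\xi) = 0$ and $U_j(\xi) \neq 0$, the other factor vanishes, and its degree is at most $k$, giving the required lower-order $U_s$. Your remark that ``I do not actually need the full strength of $U_j(\xi) = 0$'' is prescient, but the mechanism that produces the weaker conclusion is this factorization, not a sign-definiteness of the seed.
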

	\begin{proof}
		If $k = j$, the statement follows immediately from the definition of $U_{j+1}$: %{\color{red} [Missing factor of $-\frac{1}{\gamma_j}$ in the second part of the equality below, assuming $\gamma_j \ne 0$?]}
		\[\dfrac {\partial U_{j+1}}{\partial \gamma_j}(\xi)  = 
		   (-1)^j U_j(\xi) - U_{j-1}(\xi) = -\frac{1}{\gamma_j} \left(\xi + (-1)^{j+1} \right) U_j(\xi).  \]
		   {Since $U_j(\xi)$ cannot be zero, by virtue of the interlacing property shown in the proof of Lemma 4,}
		   the previous implies that $\frac {\partial U_{j+1}}{\partial \gamma_j}(\xi) = 0 \ \Leftrightarrow \ \xi = (-1)^j$.
		   If $j$ is even it holds that $U_1(\xi) = 0$, and if it is odd then $U_2(\xi, 0) = 0$.

		Now, to consider the remaining cases, we fix $0 < j < k$.
		If we define
		\[ \eta_k(x) = (-1)^{k+1}(1-\gamma_k) + x,\]
		we can write
		\[ 0 = U_{k+1}(\xi) = \eta_k(\xi) U_k(\xi) - \gamma_k U_{k-1}(\xi),\]
		from which (again $U_k (\xi)$ cannot be zero):
		\[ \eta_k(\xi) = \gamma_k \frac{U_{k-1}(\xi)}{U_{k}(\xi)}.\]
We now compute the relevant partial derivatives, with every polynomial evaluated at $\xi$:
		\[ \dfrac {\partial U_{k+1}}{\partial \gamma_j} = \eta_k \dfrac {\partial U_{k}}{\partial \gamma_j}
		- \gamma_k  \dfrac {\partial U_{k-1}}{\partial \gamma_j} 
		 = \frac{\gamma_k}{U_k}\left( U_{k-1} \dfrac {\partial U_{k}}{\partial \gamma_j} 
		- \dfrac {\partial U_{k-1}}{\partial \gamma_j}   U_k  \right). \]
		Notice that for $k>j$, $\eta_k$ does not depend on $\gamma_j$. 
		We now show that the sequence 
		\[ \widehat a_{k+1} = U_{k}(\xi) \dfrac {\partial U_{k+1}}{\partial \gamma_j} (\xi) -
		\dfrac {\partial U_{k}}{\partial \gamma_j} (\xi) U_{k+1}(\xi)\]
		can be computed by recurrence, as in the proof of Lemma 4, using the identities:
		 \begin{align*} U_{k+1} &= \eta_k U_k - \gamma_k U_{k-1}, \\
         \dfrac {\partial U_{k+1}}{\partial \gamma_j}  &= \eta_k  \dfrac {\partial U_{k}}{\partial \gamma_j} - \gamma_k  \dfrac {\partial U_{k-1}}{\partial \gamma_j}. \end{align*}
        Multiplying the first of the previous by $\frac {\partial U_{k}}{\partial \gamma_j}$ and the second by $U_{k}$, then taking the difference yields
		\[ \widehat a_{k+1} = \gamma_k \widehat a_k, \qquad \ldots, \qquad  \widehat a_{k} = \gamma_{k-1} \ldots \gamma_{j+1} \widehat a_{j+1}.\]
		We can therefore write
		\begin{align*}  \dfrac {\partial U_{k+1}}{\partial \gamma_j} &= \frac{1}{U_k}\gamma_k \ldots \gamma_{j+1}   \left(U_{j} \dfrac {\partial U_{j+1}}{\partial \gamma_j} -
		\dfrac {\partial U_{j}}{\partial \gamma_j}  U_{j+1}\right)
			= \frac{1}{U_k}\gamma_k \ldots \gamma_{j+1} U_{j} \dfrac {\partial U_{j+1}}{\partial \gamma_j} \\
			&= \frac{1}{U_k}\gamma_k \ldots \gamma_{j+1} U_{j} \left((-1)^j U_j - U_{j-1}\right),
		\end{align*}
		{where we use that $U_j$ does not depend on $\gamma_j$ and hence $\frac {\partial U_{j}}{\partial \gamma_j} = 0$}. We conclude that 
		$\frac {\partial U_{k+1}}{\partial \gamma_j}(\xi) = 0$ occurs if either $U_j(\xi) = 0$ or $(-1)^j U_j(\xi) - U_{j-1}(\xi) = 0$. %{\color{red} [$\gamma_* = 0$ excluded.]}
		In this last case we also have that
		\[ U_{j+1}(\xi) = \left( (-1)^{j+1} + \xi \right) U_j(\xi) + \gamma_j\left((-1)^j U_j(\xi) - U_{j-1}(\xi) \right) =  \left( \xi + (-1)^{j+1} \right) U_j(\xi).\]
		We consider separately the cases of $j$ even and odd. Assuming first that $j$ is even, then $U_{j+1}(\xi) = (\xi-1) U_j(\xi) = U_1(\xi) U_j(\xi)$, and 
		\[ U_{j+2}(\xi) = ((1-\gamma_{j+1} + \xi) U_{1}(\xi) - \gamma_{j+1}) U_j(\xi) = U_2(\xi,\gamma_{j+1}) U_j(\xi).\]
		Exploiting the recurrence relation \eqref{un}, we obtain
		\[ 0 = U_{k+1}(\xi) =  U_{k+1-j}(\xi,\gamma_{j+1}, \ldots, \gamma_{k}) U_j(\xi).\]
		If $j$ is odd, then $U_{j+1}(\xi) = (\xi+1) U_j(\xi) = \dfrac{U_2(\xi, 0)}{\xi-1}%\footnote{this rational expression, as well as the one in the next equation is well defined since 1 being a root of $U_1(\lambda)$ and of $U_2(\lambda, 0)$, is also a root of $U_i(\lambda, 0, \ldots)$ for every $i \ge 1$} 
		U_j(\xi)$. Again, by exploiting the recurrence \eqref{un} we obtain %{\color{red} [Does one need to divide the right expression below by $\xi - 1$, as I guess $\xi = 1$ may not necessarily be a legitimate case? Also, the polynomial on the right is one order higher than that on the left.]}
		\[ 0 = U_{k+1}(\xi) =  {\frac{1}{\xi - 1}} U_{k+2-j}(\xi,0,\gamma_{j+1}, \ldots, \gamma_{k}) U_j(\xi).\]
		In both cases {it holds that if} $\dfrac {\partial U_{k+1}}{\partial \gamma_j}(\xi)$ vanishes then $\xi$ is a zero of a
		lower-order polynomial $U_s$ for a specific choice of its parameters.
	\end{proof}
	Summarizing, we have proved that either
	\begin{enumerate}
		\item[(i)] any zero $\xi$ of the polynomial $U_k(\lambda, \bgamma)$ is a monotonic function in each of its parameters
		$\gamma_j \in (0,1]$. If this function is increasing (respectively decreasing), its maximum (respectively minimum) 
		is achieved at $\gamma_j = 1$. Using a continuity argument, we can use $\gamma_j = 0$ to obtain a lower (respectively upper) bound of the value of $\xi$, or
	% \end{enumerate}
	% or
	% \begin{enumerate}
	% \setcounter{enumi}{1}
	\item[(ii)]
		$\xi$ is also a zero of a suitable polynomial $U_s$ of lower order, {the value of which does not depend on $\gamma_j$}.
	\end{enumerate}

	\subsection{Zeros of $U_{k+1}(x, \boldsymbol{\gamma})$}
    %{\color{red} [Have we argued all the zeros must be real?]}
We are now in a position to establish ranges for the zeros of the polynomial family $U_*$, and hence deploy Theorem \ref{theo2} to obtain eigenvalue bounds of $\mathcal{P}_D^{-1} \mathcal{A}$ for any number of blocks.

	{
	\begin{Theorem}
		\label{Theo1}
		The zeros of $U_{k+1}(x, \boldsymbol{\gamma})$ are contained in the following set:
		\begin{equation*}% \label{intervals} 
			\mathcal{I}_{k+1} \equiv \bigcup_{j=1}^{k+1} \mathcal{I}(P_{j})   \cup -\mathcal{I}(P_{j-1}),  
        \end{equation*}
		where $P_j$ is as in \Cref{defTV}.
		\label{Th1}
	\end{Theorem}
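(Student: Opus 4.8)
The plan is to prove the statement by induction on $k$, leveraging the dichotomy (i)--(ii) obtained just above: at a zero of $U_{k+1}$, the zero is either a strictly monotone function of each $\gamma_j$ (so its extreme value over $\gamma_j \in [0,1]$ is attained at $\gamma_j \in \{0,1\}$, using the continuity argument), or it is simultaneously a zero of a strictly lower-order polynomial $U_s$, $s \le k$, whose value does not depend on that $\gamma_j$. For the base case $k = 0$ one has $U_1(x,\bgamma) = x-1$, whose only zero is the zero $1$ of $P_1$, so $\mathcal{I}_1 = \mathcal{I}(P_1)$ and there is nothing to prove.

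For the inductive step, fix $\bgamma \in [0,1]^N$ (only $\gamma_1,\dots,\gamma_k$ are active) and a zero $\xi$ of $U_{k+1}(\cdot,\bgamma)$; by \Cref{lemma:real_roots} it is simple, hence $\frac{\partial U_{k+1}}{\partial \lambda}(\xi,\bgamma) \neq 0$, and $\xi$ extends to a continuous root branch over the cube. Applying the dichotomy coordinate by coordinate, either some $\gamma_j$ is encountered for which $\xi$ is also a zero of a lower-order $U_s$, in which case $\xi \in \mathcal{I}_s$, and the nested structure $\mathcal{I}_s = \bigcup_{i=1}^{s}\big(\mathcal{I}(P_i) \cup -\mathcal{I}(P_{i-1})\big) \subseteq \mathcal{I}_{k+1}$ (valid for index $s \le k$ by the induction hypothesis) settles the claim for that $\xi$; or else, invoking the first-order optimality conditions (which by simplicity of the root reduce to $\frac{\partial U_{k+1}}{\partial\gamma_j} = 0$, i.e.\ again to case (ii)), the extreme values of the branch through $\xi$ are attained at a vertex $\hat\bgamma \in \{0,1\}^k$, reducing the problem to bounding the zeros of $U_{k+1}(\cdot,\hat\bgamma)$ at vertices.

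At $\hat\bgamma = \boldsymbol{1}_k$ one has $U_{k+1}(\cdot,\boldsymbol{1}_k) = P_{k+1}$ by \Cref{defTV}, so those zeros lie in $\mathcal{I}(P_{k+1}) \subseteq \mathcal{I}_{k+1}$. At any other vertex, let $i$ be the largest index with $\hat\gamma_i = 0$; then $\hat\gamma_{i+1} = \dots = \hat\gamma_k = 1$, and iterating the recurrence \eqref{un} starting from the factorization $U_{i+1}(x,\hat\bgamma) = (x - (-1)^i)\,U_i(x,\hat\gamma_1,\dots,\hat\gamma_{i-1})$ of \Cref{rem} yields
\[
  U_{k+1}(x,\hat\bgamma) = \pi_{k+1-i}(x)\, U_i(x,\hat\gamma_1,\dots,\hat\gamma_{i-1}),
\]
where $\pi_{k+1-i}(x) = P_{k+1-i}(x)$ if $i$ is even and $\pi_{k+1-i}(x) = (-1)^{k+1-i} P_{k+1-i}(-x)$ if $i$ is odd --- the reflection being exactly the mechanism behind relation \eqref{Vrecur}. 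A zero of $\pi_{k+1-i}$ therefore lies in $\mathcal{I}(P_{k+1-i})$ or in $-\mathcal{I}(P_{k+1-i})$, both contained in $\mathcal{I}_{k+1}$ (the ranges $k+1-i \le k+1$ and $k+1-i \le k$ match the two families in the union), while a zero of $U_i(\cdot,\hat\gamma_1,\dots,\hat\gamma_{i-1})$ lies in $\mathcal{I}_i \subseteq \mathcal{I}_{k+1}$ by the induction hypothesis. Since every zero of $U_{k+1}(\cdot,\bgamma)$ lies on one of the $k+1$ continuous root branches over the connected cube, and the extreme values of each branch have just been placed inside $\mathcal{I}_{k+1}$, the entire branch --- hence every zero --- lies in $\mathcal{I}_{k+1}$.

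The main obstacle is this final closure step together with the vertex bookkeeping: one must check that at an \emph{arbitrary} vertex the iterated factorization produces precisely the polynomials $P_{\le k+1}$ and the reflected polynomials $P_{\le k}(-\,\cdot)$, with the correct sign pattern and the correct index shift between the two families; and --- more delicately --- that the nested union defining $\mathcal{I}_{k+1}$ is ``interval-convex along branches,'' i.e.\ that controlling only the extremal values of each root branch (attained at vertices and at degeneracy points) is enough to capture its whole range. Both points are underpinned by the interlacing of the zeros of consecutive $U_\ast$-polynomials exhibited in the proof of \Cref{lemma:real_roots}, which confines each branch between neighbouring lower-order zeros throughout the cube.
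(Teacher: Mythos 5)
Your proposal is correct and follows essentially the same route as the paper: reduce to the vertices of the cube via Lemmas~\ref{lemma:real_roots} and \ref{lem:how_roots_move}, handle $\boldsymbol{1}_k$ directly, and at any other vertex peel off the trailing block of ones as a factor $Q_{k+1-i}$ satisfying the Chebyshev-like recurrence (equal to $P_{k+1-i}(x)$ or, up to sign, $P_{k+1-i}(-x)$ depending on the parity of $i$), then recurse on $U_i$. The closure concern you raise in the final paragraph (whether controlling extremal vertex values of each root branch captures its whole range) is present, and treated with the same degree of informality, in the paper's own proof, which relies on the interpretation of $\mathcal{I}_{k+1}$ as a union of intervals together with the monotonicity and interlacing established in the preceding lemmas.
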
}

	\begin{proof}%\footnote{Thanks to Michele Bergamaschi for the main idea of this proof}
		The findings of \Cref{lemma:real_roots} and the previous observations allow us to choose $\gamma_j \in \{0,1\}$, $j = 1, \ldots, k$. {Notice that, whenever $\frac {\partial U_{k+1}}{\partial \gamma_j}(\xi) = 0$,  with $\xi$ a zero of $U_{k+1}$, we may fix $\gamma_j = 0$ for the analysis}.
		
		If $\boldsymbol \gamma = \boldsymbol 1_k$ then $U_{k+1} \equiv P_{k+1}$ and
		the thesis immediately follows. Otherwise,
		we show by induction that for every other choice
		of $\boldsymbol \gamma \in \{0,1\}^k$ the zeros of $U_{k+1}$ are in {$\bigcup_{j=1}^{k+1} \mathcal{I}(P_{j})\cup \mathcal{I}(V_{j})$.} 
		Let us assume the statement is true for indices $j \le k$ and prove it for index $k+1$.
        
		The basis of the induction: the roots of $U_2(x,\boldsymbol \gamma)$ {are the roots of $x^2-1 = 0$ if $\gamma_1 = 0$, or
		those of $x^2 -x - 1$ if $\gamma_1 = 1$, which once put together form the set
		$\{\pm 1, \frac{1\pm \sqrt 5}{2}\}$. Employing our convention of combining negative and positive intervals gives that $\mathcal{I}(P_{2})\cup \mathcal{I}(V_{2}) \equiv [-1, \frac{1 - \sqrt 5}{2}] \cup [1, \frac{1 + \sqrt 5}{2}]$.}
        
		Denote now $i = \max\{r \text{ such that } \gamma_r = 0\}$.
		Then for $j > 0$ we have (since $\gamma_{i+j} = 1$):
		\[U_{j+i+1} = x U_{j+i}- U_{j+i-1}.\]
		We then show by induction that
		\begin{equation} \label{Q}
		U_{j+i} = Q_j U_{i},\quad  j = 0, \ldots, k-i+1, \end{equation}
		where $Q_j$ satisfies the same recurrence relation
		as the polynomials {$P_*$} and $V_*$:
			\[ Q_{j+2} = x Q_{j+1} - Q_{j}, \quad j \ge 0. \]
		First, \eqref{Q} holds trivially for $j=0$, if we set $Q_0 = 1$.
		%Consider now separately the cases $i+1$ odd and $i+1$ even, which provide different definitions of $Q_1$.
		Since $\gamma_i = 0$,
		\[
    U_{i+1} =
    \begin{cases}
      (x+1) U_{i} \equiv Q_1^{\text{(even)}} U_i & \text{if $i+1$ is even}, \\
      (x-1) U_{i} \equiv Q_1^{\text{(odd)}} U_i & \text{if $i+1$ is odd}.
    \end{cases}
    \]
		Then in both cases, with $j \ge 0$,
		\[U_{j+i+2} = x U_{j+i+1}- U_{j+i} = x Q_{j+1} U_i - Q_j U_i = (x Q_{j+1} - Q_j) U_i = Q_{j+2} U_i, \]
		which proves \eqref{Q}.
		%Clearly the recurrence for the $Q_j$s is the same as the one for the polynomials {\color{red} $P_j$} (and $V_j$).
		Considering the case $i+1$ odd, we have  that
		$Q_j^{\text{(odd)}} \equiv {P_j}$. If $i+1$ is even, we have that
		$Q_0 = Q_0^{\text{(even)}} = \dfrac{V_{1}}{x-1}$,
		$Q_1^{\text{(even)}} = \dfrac{V_{2}}{x-1}$ and hence
		$Q_j^{\text{(even)}} \equiv \dfrac{V_{j+1}}{x-1}$ (notice that we may argue this even for $x=1$ since, by \eqref{Vrecur}, $V_j(x)/(x-1)$ is a polynomial). Recalling \eqref{Vrecur} {and letting $j=k-i+1$ in~\eqref{Q}}, we conclude that
		\[ U_{k+1}(x) = \begin{cases}
			{P_{k-i+1}(x)} U_i(x) & \text{if } i+1 \text{ is } \text{odd,} \\
		\dfrac{V_{k-i+2}(x)}{x-1} U_i(x)  = (-1)^{k-i+1} {P_{k-i+1}(-x)} U_i(x)& \text{if } i+1 \text{ is } \text{even.} \end{cases}
			\]
			Applying the inductive hypothesis to $U_i$ (since $i \le k$) completes the proof.
	\end{proof}
	The exact zeros of the polynomials $U_k(\lambda, \boldsymbol \gamma)$ for binary $\bgamma$ can also be derived from the eigenvalues of suitably chosen tridiagonal matrices:
\begin{Theorem} \label{thm:zeros_of_U}
  Let $\bgamma \in \{0, 1\}^k$ and denote by $j_1 < \ldots < j_{\ell-1}$  %{\color{red} [JP: $j_{\ell}$?]} 
	all indices where $\gamma_{j}$ vanishes. Moreover, define $k_i = j_i - j_{i-1}$ for $i = 1, \dotsc, \ell$, where $j_0 = 0$ and $j_\ell = k+1$.
  Then, the zeros of $U_{k+1}(x, \bgamma)$ are given by 
%	{\color{red} 
%	[JP: check indices? If $\ell=1$, then we define only $k_1=k$ roots for a polynomial of degree $k+1$. Possibly also $j_{\ell}=k+1$.] [JP: I think second case can be stated on its own, because it covers the first?]}
  \[
  %x_{i,1} = (-1)^{j_{i-1}} \quad \emph{if } k_i = 1 \quad \emph{or else} \quad
  x_{i,s} = 2 (-1)^{j_{i-1}} \cos \left(\frac{(2s - 1) \pi}{2 k_i + 1}\right) \quad \text{for } s = 1, \dotsc, k_i, ~ i = 1, \dotsc, \ell.
  \]
	%{The only multiple roots can be $\pm 1$.}
\end{Theorem}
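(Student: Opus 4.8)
The plan is to exploit the factorization of $U_{k+1}(x,\bgamma)$ into a product of shifted Chebyshev-type polynomials that is already implicit in the proof of Theorem~\ref{Th1}, and then read off the zeros block by block. First I would iterate the identity \eqref{Q} across \emph{all} the vanishing indices $j_1 < \cdots < j_{\ell-1}$, not just the last one. The argument in the proof of Theorem~\ref{Th1} shows that between two consecutive zero-indices the polynomial picks up a factor obeying the three-term recurrence $Q_{m+2}=xQ_{m+1}-Q_m$; the only subtlety is the sign convention induced by parity, captured by \eqref{Vrecur}, namely that a segment starting at an even position contributes $(-1)^{(\text{length})-1}P_{\text{length}}(-x)$ rather than $P_{\text{length}}(x)$. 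Carrying this through yields a telescoping factorization
\[
  U_{k+1}(x,\bgamma) \;=\; \prod_{i=1}^{\ell} \varepsilon_i\, P_{k_i}\!\big((-1)^{j_{i-1}} x\big),
\]
with $\varepsilon_i \in \{\pm 1\}$ irrelevant sign constants and $k_i = j_i - j_{i-1}$ the block lengths.

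Second, I would verify this factorization by induction on $\ell$ (the number of blocks): the base case $\ell = 1$ is the statement $U_{k+1}(x,\boldsymbol 1_k) \equiv P_{k+1}(x)$ noted right after Definition~\ref{defTV}, and the inductive step is exactly the computation in the proof of Theorem~\ref{Th1}, where splitting off the final block at the last vanishing index $j_{\ell-1}$ gives $U_{k+1} = (\text{factor of length } k_\ell) \cdot U_{j_{\ell-1}}$, with the parity of $j_{\ell-1}+1$ determining whether the factor is $P_{k_\ell}(x)$ or $\pm P_{k_\ell}(-x)$; then apply the inductive hypothesis to $U_{j_{\ell-1}}$, which is $U_{(j_{\ell-1}-1)+1}$ with binary parameter vector having vanishing indices $j_1,\dots,j_{\ell-2}$. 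One must be slightly careful that $U_{j_{\ell-1}}$ really is governed by $\bgamma$ restricted to its first $j_{\ell-1}-1$ entries, which is immediate from the recurrence \eqref{un}, and that the parity bookkeeping $(-1)^{j_{i-1}}$ composes correctly across the split.

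Third, with the factorization in hand the zeros follow immediately: the zeros of $P_{k_i}(y)$ are $y = 2\cos\big((2s-1)\pi/(2k_i+1)\big)$ for $s = 1,\dots,k_i$ by the result quoted from \cite{SZ}, so the zeros of the $i$-th factor $P_{k_i}((-1)^{j_{i-1}}x)$ are obtained by solving $(-1)^{j_{i-1}} x = 2\cos(\cdots)$, i.e.\ $x = 2(-1)^{j_{i-1}}\cos\big((2s-1)\pi/(2k_i+1)\big)$, which is exactly $x_{i,s}$. Summing the block lengths $\sum_i k_i = j_\ell - j_0 = k+1$ confirms we have accounted for all $k+1$ zeros (they are distinct by Lemma~\ref{lemma:real_roots} when all $\gamma$'s are positive, and one can note the boundary case is covered by continuity / direct inspection).

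The main obstacle I anticipate is purely bookkeeping rather than conceptual: getting the parity signs $(-1)^{j_{i-1}}$ and the auxiliary $\pm 1$ factors exactly right when chaining several blocks together, since each time a block begins at an even index the variable gets reflected $x \mapsto -x$ and one has to track how these reflections accumulate (they should simply alternate with the parity of the starting index, but one should double-check via a two-block example, e.g.\ reproducing $\mathcal{I}(P_2)\cup\mathcal{I}(V_2)$ from the $\gamma_1=0$ case in the proof of Theorem~\ref{Th1}). A secondary point to handle cleanly is the degenerate factor of length one: a block $P_1(y) = y$ contributes the single zero $x = 0$, and $P_1((-1)^{j_{i-1}}x)$ still gives $x=0$, consistent with $2(-1)^{j_{i-1}}\cos(\pi/3) = (-1)^{j_{i-1}}$ — wait, that is not zero, so in fact $k_i=1$ gives $\cos(\pi/3)=1/2$ and $x_{i,s} = (-1)^{j_{i-1}}$, matching Remark~\ref{rem} that a vanishing $\gamma$ introduces a root $\pm 1$; this is consistent but worth stating explicitly so the reader sees the $\gamma_k=0$ roots of Remark~\ref{rem} emerging correctly from the formula.
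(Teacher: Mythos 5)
Your approach is correct, but it takes a genuinely different route from the paper's proof, and in fact the paper's Remark following the theorem anticipates exactly this alternative.

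The paper proves the theorem by a linear-algebra argument: it identifies $U_{k+1}(x,\bgamma)$, up to the reflection $x\mapsto -x$, with the characteristic polynomial of a symmetric tridiagonal $(k{+}1)\times(k{+}1)$ matrix $M_{k+1}$ whose diagonal entries are $c_1=-1$, $c_j=(-1)^j(1-\gamma_{j-1})$ and whose off-diagonals are $b_j=\sqrt{\gamma_{j-1}}$. When a $\gamma_j$ vanishes, the corresponding off-diagonal entry is zero and $M_{k+1}$ decouples mechanically into $\ell$ blocks $\pm M_{k_i}^{\pm}$; each block is itself a small preconditioned saddle-point matrix, so Theorem~\ref{theo2} in the trivial case (with identity preconditioner) gives its spectrum explicitly. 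No parity bookkeeping is required because the block structure arises automatically from the vanishing of $b_{j+1}$.

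You instead iterate the polynomial-factorization identity~\eqref{Q} from the proof of Theorem~\ref{Th1} across all the vanishing indices, obtaining $U_{k+1}(x,\bgamma)=\prod_{i=1}^{\ell}\varepsilon_i P_{k_i}\!\big((-1)^{j_{i-1}}x\big)$ and reading off the zeros from the formula for the zeros of $P_{k_i}$ quoted from~\cite{SZ}. This works, and the inductive step you sketch (peel off the last block, apply the hypothesis to $U_{j_{\ell-1}}$, which depends only on $\gamma_1,\dots,\gamma_{j_{\ell-1}-1}$) is sound. One clarifying remark, since you worried about it yourself: the reflections do \emph{not} accumulate across blocks. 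Each factor $P_{k_i}$ carries its own reflection $(-1)^{j_{i-1}}$, determined by the parity of the \emph{absolute} starting index $j_{i-1}$, while the inner polynomial $U_{j_{\ell-1}}$ is still evaluated at $x$ (not at $\pm x$); so there is no composition of sign flips to track. Also, your initial slip $P_1(y)=y$ should simply be $P_1(y)=y-1$ (zero at $y=1$), which is what your self-correction lands on and agrees with Remark~\ref{rem}.

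In short: both arguments are valid. The tridiagonal-matrix route is slightly cleaner on the bookkeeping and connects nicely to Theorem~\ref{theo2}; your recurrence-based route stays entirely within the polynomial framework and makes the link to the factorization in Theorem~\ref{Th1}'s proof explicit.
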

\begin{proof}
  Let us define a symmetric tridiagonal $k \times k$ matrix $M_k$ with diagonal entries $c_1, \dotsc, c_{k}$ and sub- and super-diagonal entries $b_2, \dotsc, b_{k}$. The characteristic polynomial $\chi_k(\lambda)$ of $M_k$ is obtained by the well-known recurrence relation~\cite{wilkinson1962handbook}:
  \[
  \chi_0(\lambda) = 1, \qquad \chi_1(\lambda) = c_1 - \lambda, \qquad \chi_{j+1} = (c_{j+1} - \lambda) \chi_j(\lambda) - b_{j+1}^2 \chi_{j-1}(\lambda), \quad j = 1, \dotsc, k-1.
  \]
  If we choose $c_1 = -1$, $c_j = (-1)^j (1 - \gamma_{j-1})$ for $j = 2, \dotsc, k$, and $b_j = \sqrt{\gamma_{j-1}}$ for $j = 2, \dotsc, k$, then $U_{k+1}(x, \bgamma)$ coincides with $\chi_{k+1}(-x)$. Hence, the roots of $U_{k+1}(x, \bgamma)$ are the eigenvalues of $-M_{k+1}$.

  For $\gamma_j = 0$ we have $b_{j+1} = 0$ and the matrix $M_{k+1}$ decouples into two smaller blocks.
	%of size 
	%{\color{red} $r$ and $k-r+1$ [JP: $j$ and $k-j+1$? $r$ undefined at this point.]}. 
	Hence, because $\bgamma \in \{ 0, 1 \}^k$, the matrix $M_{k+1}$ can be decomposed into $\ell$ blocks of sizes $r = k_1, \dotsc, k_\ell$ of the form
  \[
    M_r^- =
    \begin{bmatrix}
      -1 & 1 \\
      1 & 0 & \ddots \\
      & \ddots & \ddots & 1 \\
      & & 1 & 0
    \end{bmatrix}
    \quad \text{or} \quad
    M_r^+ =
    \begin{bmatrix}
      1 & 1 \\
      1 & 0 & \ddots \\
      & \ddots & \ddots & 1 \\
      & & 1 & 0
    \end{bmatrix},
  \]
  where the leading entry is $c_{j_{i-1}+1} = (-1)^{j_{i-1}+1}$. % Note that this sign is negated in the statement of the eigenvalues, because the roots are the eigenvalues of $-M_{k+1}$, so we are left with a $(-1)^{j_{i-1}}$ term.
  We can view $M_r^+$ and $- M_r^-$ as multiple saddle-point systems of dimension $r$, and their analogous block diagonal preconditioners are identity matrices in both cases. Hence, the eigenvalues of $M_r^+$ are $\{ 1 \}$ if $r = 1$ or $\{ 2 \cos ( (2s - 1) \pi / (2 r + 1)) \mid s = 1, \dotsc, r \}$ if $r > 1$,
%	{\color{red} [JP: I think the $r=1$ case could be absorbed into the $r>1$ case?]}, 
	while the eigenvalues of $M_r^-$ are the same up to a negative sign, which yields the asserted formulas.
  %{  Finally, Lemma~\ref{lem:derivatives} rules out multiple roots other than $\pm 1$.}
\end{proof}
\begin{Remark}
  Theorem \ref{Theo1}, which we proved using the recursive definition of polynomials, could also be proved via Theorem \ref{thm:zeros_of_U}, leveraging a linear algebra-oriented argument. We believe it to be worthwhile to present both approaches here.
\end{Remark}
	\begin{Remark} The bounds for $k=2$ coincide with the eigenvalues of the $3 \times 3$ block systems in \cite{Bradley}, and the bounds for $k=3$ with those of
the $4 \times 4$ block systems in \cite[Thm. 3.1]{pearson2023symmetric}. Figure \ref{fig:Bounds}, which reports the intervals containing
		the eigenvalues of the preconditioned multiple saddle-point matrices, for $k = 1, \ldots, 9$, extends Table 1 in \cite{pearson2023symmetric}.
	\end{Remark}

%\begin{Corollary}
%	The zeros of $U_{k+1}(x, \boldsymbol{\gamma})$ are contained in $\mathcal{I}(T_{k+1})   \cup -\mathcal{I}(T_{k})$.
%\end{Corollary}
%\begin {proof}
%The  statement follows from \Cref{Theo1} and the inclusion property of  the intervals containing the zeros of $T_k$: $\mathcal{I}(T_{1}) \subset \mathcal{I}(T_{2}) \ldots
%\subset \ldots \mathcal{I}(T_{k+1}).$
%\end{proof}

% \begin{comment}
% 			\begin{table}[h!]
% \begin{small}
% \begin{minipage}{7cm}
%                         \begin{tabular}{lrrrrr}
%                                 $k$ & & & & &  \\
% 				0 & 1 & & & & \\
% 				1 & $-0.6180$              & 1.6180 & & &             \\
% 				2 & $-1.2470$ & 0.4450 & 1.8019& &   \\
% 				3 & $-1.5321$ & $-0.3473$ &  1 & 1.8794 & \\
% 				4 &    $-1.6825$  &${-0.8308}$&   0.2846&   1.3097&   {1.9190} \\
%                         \end{tabular}
% \end{minipage}
% 	\hspace{4mm}
% \begin{minipage}{7cm}
%                         \begin{tabular}{rrrrr}
%                                  & & & &  \\
%                                  1 \\
%                                  $-1$ &  1 \\
%                                  $-1.6180$& 0.6180& 1 \\
%                                  $-1.8019$& $-0.4450$& 1& 1.2470  \\
% 				 ${-1.8794}$ &$-1$& 0.3473& 1& 1.5321  \\
%                         \end{tabular}
% \end{minipage}
% \end{small}
% 				\caption{Roots of $T_{k+1}$ (left) and of $V_{k+1}$ (right) for $k = 0, \ldots, 4$.}
% 				\label{figroots}
% 	\end{table}
% 	\end{comment}

	\begin{figure}
	\begin{minipage}{7cm}
	\begin{tabular}{|c||c|c|c|c|} \hline
		$k$  & $\text{Bound}_l^-$  & $\text{Bound}_l^+$  & $\text{Bound}_u^-$  & $\text{Bound}_u^+$  \\ \hline \hline
		9  &   $-1.9727$   &  $-0.1495$   &   0.1652  &    1.9777 \\[.2em]
		8  &   $-1.9659$   &  $-0.1845$   &   0.1652  &    1.9727 \\[.2em]
		7  &   $-1.9563$   &  $-0.1845$   &   0.2091  &    1.9659 \\[.2em]
		6  &   $-1.9419$   &  $-0.2411$   &   0.2091  &    1.9563 \\[.2em]
		5  &   $-1.9190$   &  $-0.2411$   &   0.2846  &    1.9419 \\[.2em]
		4  &   $-1.8794$   &  $-0.3473$   &   0.2846  &    1.9190 \\[.2em]
		3  &   $-1.8019$   &  $-0.3473$   &   0.4450  &    1.8794 \\[.2em]
		2  &   $-1.6180$   &  $-0.6180$   &   0.4450  &    1.8019 \\[.2em]
		1  &   $-1$   &  $-0.6180$   &   1  &    1.6180 \\[.0001em] \hline% [.2em]
%		&&&& \\ \hline
	\end{tabular}
	\end{minipage}
	\begin{minipage}{7cm}
		\includegraphics[width=8cm]{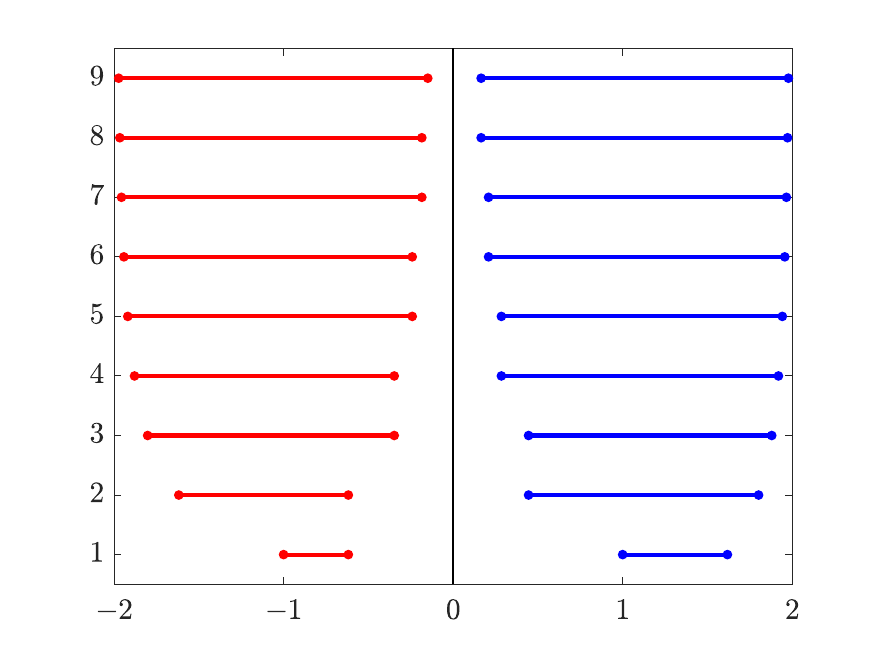}
	\end{minipage}
		\caption{Bounding intervals $\mathcal I_{k+1} = [\text{Bound}_l^-, \text{Bound}_l^+] \cup [\text{Bound}_u^-, \text{Bound}_u^+]$ for $U_{k+1}$, $k = 1, \ldots, 9$. This corresponds to eigenvalue bounds for preconditioned multiple saddle-point systems with $k+1$ blocks (i.e., $N=k$).}
		\label{fig:Bounds}
	\end{figure}

\subsection{Numerical validation}
To validate the analysis of this section, in particular Theorem \ref{Theo1}, we now construct randomly-generated test cases with increasing numbers of blocks\footnote{All computational tests in this paper are carried out on an Intel(R) Core(TM) i5-13500T CPU with 14 cores and 20 threads, in {\scshape Matlab} R2022a.}. For each numerical test with $N+1$ blocks, we set $n_0 = 300$ and let $n_{k+1} = n_k - \lceil 10 \ast \texttt{rand} \rceil$ for $k = 0, \ldots, N-1$, using {\scshape Matlab}'s \texttt{rand} function repeatedly. We also evaluate $B_k$ using \texttt{rand} for each matrix entry. We carry out tests within two settings for $A_k$:
\begin{itemize}
    \item Compute a diagonal matrix $A_0$ using \texttt{rand} for each diagonal entry. For $k=1, \ldots, N$, compute diagonal matrices $A_k$ using $10^{-4} \ast \texttt{rand}$ for each diagonal entry.
    \item Compute $A_0$ using \texttt{rand} for each entry, take its symmetric part, then add 1.01 times an identity matrix multiplied by the absolute value of the smallest eigenvalue, to ensure
    symmetric positive definiteness. For $k=1, \ldots, N$, compute $A_k$ using \texttt{rand} for each entry, take its symmetric part, then add an identity matrix multiplied by the absolute value of the smallest eigenvalue, to ensure
    symmetric positive semi-definiteness.
\end{itemize}

% \begin{eqnarray*}
%	n_0 &=& 300, \\ n_{k+1} &=& n_k - \texttt{ceil}(10 \ast \texttt{rand}(1)), \ k = 0, \ldots, N-1. \\
%	 A_0 &=& \text{diag}(\texttt{rand}(n_0) ),  \\ 
%	 A_k &=& \delta \cdot \text{diag}(\texttt{rand}(n_k) ), \qquad 
%	 B_k = \texttt{rand}(n_{k+1},n_k ), \quad k = 0, \ldots, N-1; 
% \end{eqnarray*}
% and the diagonal preconditioner is obtained  with exact Schur complements. We solved linear systems with the previously defined matrices using $\delta = 10^{-4}$ and $k = 2,\ldots, 9$, a random right-hand-side and a zero initial solution, with the MINRES iterative solver, until the residual norm is reduced by $10^{-10}$. Table \ref{tabexact} report bounds, true eigenvalue limits and number of MINRES iterations for each $k$. The number in the ``Iter'' column at end of each ``predicted'' row is the prediction of the number of iterations, computed by
% \[ \text{Iter} = \left \lceil \frac{10 \log(10)}{-\log R} \right \rceil, \qquad R =\sqrt{\frac{\sqrt{|d_0 d_3|} - \sqrt{|d_1  d_2|}}{\sqrt{|d_0 d_3|} + \sqrt{|d_1  d_2|}}}  \]

 \begin{table}[h!]
 \begin{center}
 \begin{tabular}{|r|r||c|c|c|c|c|} \hline
	 $N$ (\text{size}) &&  $\text{Bound}_l^-$ &  $\text{Bound}_u^-$ &  $\text{Bound}_l^+$ &  $\text{Bound}_u^+$  & Iter \\
	 \hline 
	 \hline 
	 1 (595) & computed          & $-0.6207$    & $-0.6180$     & 1.0000   &  1.6180   &    10 \\
	 1 (591) & computed          & $-0.9999$    & $-0.6180$     & 1.0000   &  1.6180   &    33 \\
	1 &theoretical    & $-1$    & $-0.6180$     & 1    &  1.6180 &  \\
\hline
	 2 (878)  & computed          & $-1.2486$    & $-0.6180$     & 0.4450     & 1.8019    &   38 \\
	 2 (886)  & computed          & $-1.5313$    & $-0.6251$     & 0.5371     & 1.7102    &   59 \\
	2 &theoretical    & $-1.6180$    & $-0.6180$     & 0.4450     & 1.8019      &  \\
\hline
	 3 (1171) & computed          & $-1.5321$    & $-0.3473$     & 0.4451     & 1.8794      &63 \\
	 3 (1175) & computed          & $-1.6421$    & $-0.4951$     & 0.6095     & 1.7298      &64 \\
	3 &theoretical    & $-1.8019$    & $-0.3473$     & 0.4450     & 1.8794      &  \\
\hline
	 4 (1408) & computed          & $-1.6825$    & $-0.3495$     & 0.2846     & 1.9190      &131 \\
	 4 (1443) & computed          & $-1.6483$    & $-0.5410$     & 0.4993     & 1.7186      &69 \\
	4 &theoretical    & $-1.8794$    & $-0.3473$     & 0.2846     & 1.9190      &  \\
\hline
	 5 (1696) & computed          & $-1.7709$    & $-0.2411$     & 0.2852     & 1.9419      &162 \\
	 5 (1742) & computed          & $-1.6768$    & $-0.4830$     & 0.5486     & 1.7222      &72 \\
	5 &theoretical    & $-1.9190$    & $-0.2411$     & 0.2846     & 1.9419      &  \\
\hline
	 6 (1910) & computed          & $-1.8271$    & $-0.2559$     & 0.2091     & 1.9563      &219 \\
	 6 (1966) & computed          & $-1.6732$    & $-0.5285$     & 0.4815     & 1.7295      &72 \\
	6 &theoretical    & $-1.9419$    & $-0.2411$     & 0.2091     & 1.9563      &  \\
\hline
	 7 (2183) & computed          & $-1.8649$    & $-0.1845$      &0.2103     & 1.9659      &255 \\
	 7 (2228) & computed          & $-1.6793$    & $-0.4792$      &0.5339     & 1.7211      &74 \\
	7 &theoretical    & $-1.9563$    & $-0.1845$      &0.2091     & 1.9659      &  \\
\hline
	 8 (2549) & computed          & $-1.8916$    & $-0.2027$      &0.1652     & 1.9727      &310 \\
	 8 (2543) & computed          & $-1.6846$    & $-0.5136$      &0.4679     & 1.7282      &75 \\
	8 &theoretical    & $-1.9659$    & $-0.1845$      &0.1652     & 1.9727      &  \\
\hline
 \end{tabular}
 \end{center}
 %\end{table}
% \begin{minipage}{7cm}
 %\includegraphics[width=7cm]{its}
 %\end{minipage}
 \medskip
 \caption{Endpoints of the intervals of negative and positive eigenvalues of preconditioned multiple saddle-point system, number of {\scshape Minres} iterations required to solve a system with a random right-hand side (denoted `Iter'), as well as the value of $N$ (number of blocks minus one) and the dimension of the system solved. For each value of $N$ we solve one problem with diagonal $A_k$ (top of cell), one problem with non-diagonal random matrices (middle), and also state the bounds predicted by Theorem \ref{Theo1}.}%For each $N = 2, \ldots, 9$ we report the endpoints of the predicted intervals containing the numerically determined eigenvalues, the number of actual MINRES iterations, and the predicted MINRES iterations. {\color{red} [Check use of $N$ is consistent with how it is originally defined: should this be $1, \ldots, 8$ here?]}}
	 \label{tabexact}
	 \end{table}

The computed (negative and positive) ranges of eigenvalues, as well as the theoretically-computed bounds, are shown in Table \ref{tabexact}. Also reported are the number of {\scshape Minres} iterations required to reduce the residual norm by a factor of $10^{10}$, for a system with a right-hand side generated using the \texttt{rand} function for each entry.

In the first experimental setup we notice the close agreement of the theoretical bounds with the intervals containing the computed eigenvalues, aside from the bound on the smallest negative eigenvalue. As predicted by the theoretical and computed bounds, the {\scshape Minres} iteration count noticeably increases as $N$ does. 
% Furthermore, the bounds are fully descriptive of the MINRES behavior, giving an accurate prediction on the number of iterations, especially for larger $N$.
In the second setup this bound is more accurately captured for small $N$, but as $N$ increases all the theoretical bounds become looser, and as a result there is a much less substantial increase in {\scshape Minres} iterations.

\section{Multiple saddle-point linear systems: The inexact case}
\label{sec:inexact}

In practice, the use of exact Schur complements $S_i$ is often prohibitive, especially for larger $i$ due to the recursive definition of the Schur complements based on inverses of previous Schur complements.
In this section we present two possible approaches for analyzing the effect of applying the Schur complements in $\mathcal{P}_D$ only approximately. We first provide a general qualitative perturbation result for the case of general $N$ in Section \ref{sec:Perturb}, and then provide quantitative bounds based on field-of-value indicators in the specific setting of double saddle-point systems in Section \ref{sec:DSPAnalysis}.

\subsection{Perturbation analysis} \label{sec:Perturb}

For our analysis, we denote the approximated Schur complements by $\widehat{S}_i$ for $i = 0, \dotsc, N$. It is important to point out that each of these should approximate a perturbed Schur complement $\widetilde{S}_i$ that takes the previous Schur complement approximations into account according to
\begin{equation} \label{ApproxSi}
  \widehat{S}_0 \approx \widetilde{S}_0 := S_0 = A_0, \qquad
  \widehat{S}_i \approx \widetilde{S}_i := A_i + B_i \widehat{S}_{i-1}^{-1} B_i^{\top}, \quad i = 1, \dotsc, N.
\end{equation}

To analyze the approximate preconditioner
\[
\widehat{\mathcal{P}}_D = \text{blkdiag}\left(\widehat{S}_0, \dotsc, \widehat{S}_N\right),
\]
we define the perturbed block matrix, 
\[
\widehat{\mathcal{A}} = \mathcal{A} + \text{blkdiag}\left(\Delta_0, \ldots, \Delta_N\right), \qquad \text{with} \quad \Delta_k = (-1)^{k} \left(\widehat{S}_k - \widetilde{S}_k\right) ,
\]
and denote its diagonal blocks by $\widehat{A}_i$ for $i = 0, \dotsc, N$.
%[JP: should $(-1)^{k+1}$ be $(-1)^k$ above?]}
We shall prove the following perturbation result:
\begin{Theorem} \label{thm:perturbation}
	 {
Let $\widehat{A}_k$ be symmetric positive semi-definite (with $\widehat{A}_0$ symmetric positive definite) and $\widehat{S}_k$ be symmetric positive definite, for $k=0, \dotsc, N$, and let $\sigma^-$ and $\sigma^+$ denote the minimum and maximum 
over the eigenvalues of all $-\widehat{S}_k^{-1} \Delta_k = (-1)^{k+1} \left(I - \widehat{S}_k^{-1} \widetilde{S}_k\right)$, which are all real. Then, the eigenvalues of the approximately preconditioned matrix $\widehat{\mathcal{P}}_D^{-1} \mathcal{A}$ are contained in the Min\-kows\-ki sum $\mathcal{I}_{N+1} + [\sigma^-, \sigma^+]$.
}
\end{Theorem}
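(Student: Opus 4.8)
The plan is to reduce the inexact case to the exact case by a similarity transformation, and then absorb the perturbation into an additive shift via Weyl's inequality. First I would observe that $\widehat{\mathcal{A}} = \mathcal{A} + \mathrm{blkdiag}(\Delta_0, \dotsc, \Delta_N)$ has diagonal blocks $\widehat{A}_k = (-1)^k A_k + \Delta_k$; a short computation using \eqref{ApproxSi} and the definition $\Delta_k = (-1)^k(\widehat{S}_k - \widetilde{S}_k)$ shows that $\widehat{A}_k = (-1)^k\bigl(A_k + (\widehat{S}_k - \widetilde{S}_k)\bigr)$, and since $\widetilde{S}_k = A_k + B_k \widehat{S}_{k-1}^{-1} B_k^\top$ we get $(-1)^k \widehat{A}_k = \widehat{S}_k - B_k \widehat{S}_{k-1}^{-1} B_k^\top$, i.e. the $\widehat{S}_k$ are exactly the Schur complements of $\widehat{\mathcal{A}}$ relative to the block-diagonal preconditioner $\widehat{\mathcal{P}}_D$. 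Consequently $\widehat{\mathcal{P}}_D$ stands in the same relation to $\widehat{\mathcal{A}}$ as $\mathcal{P}_D$ does to $\mathcal{A}$, so Theorem \ref{theo2} (together with Theorem \ref{Theo1}) applies verbatim to the pair $(\widehat{\mathcal{A}}, \widehat{\mathcal{P}}_D)$: every eigenvalue of $\widehat{\mathcal{P}}_D^{-1}\widehat{\mathcal{A}}$ lies in $\mathcal{I}_{N+1}$.

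Next I would pass to the symmetric form. The eigenvalues of $\widehat{\mathcal{P}}_D^{-1}\mathcal{A}$ coincide with those of the symmetric matrix $\widehat{\mathcal{Q}} := \widehat{\mathcal{P}}_D^{-1/2} \mathcal{A}\, \widehat{\mathcal{P}}_D^{-1/2}$, and likewise the eigenvalues of $\widehat{\mathcal{P}}_D^{-1}\widehat{\mathcal{A}}$ coincide with those of $\widehat{\mathcal{P}}_D^{-1/2}\widehat{\mathcal{A}}\,\widehat{\mathcal{P}}_D^{-1/2}$. Writing $\widehat{\mathcal{A}} = \mathcal{A} + \mathrm{blkdiag}(\Delta_0,\dotsc,\Delta_N)$, we have
\[
  \widehat{\mathcal{P}}_D^{-1/2}\widehat{\mathcal{A}}\,\widehat{\mathcal{P}}_D^{-1/2}
  = \widehat{\mathcal{Q}} + \widehat{\mathcal{P}}_D^{-1/2}\,\mathrm{blkdiag}(\Delta_0,\dotsc,\Delta_N)\,\widehat{\mathcal{P}}_D^{-1/2},
\]
so $\widehat{\mathcal{Q}} = \widehat{\mathcal{P}}_D^{-1/2}\widehat{\mathcal{A}}\,\widehat{\mathcal{P}}_D^{-1/2} - \mathcal{E}$ with perturbation $\mathcal{E} := \mathrm{blkdiag}\bigl(\widehat{S}_k^{-1/2}\Delta_k\widehat{S}_k^{-1/2}\bigr)_{k=0}^N$. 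The matrix $\mathcal{E}$ is symmetric and block diagonal, so its spectrum is the union of the spectra of the blocks $\widehat{S}_k^{-1/2}\Delta_k\widehat{S}_k^{-1/2}$; by similarity each such block has the same eigenvalues as $\widehat{S}_k^{-1}\Delta_k = -(-1)^{k+1}(I - \widehat{S}_k^{-1}\widetilde{S}_k)$, which are real and lie in $[-\sigma^+, -\sigma^-]$ by the hypothesis. Hence $\sigma(\mathcal{E}) \subset [-\sigma^+, -\sigma^-]$.

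Finally I would invoke Weyl's perturbation theorem for symmetric matrices: since $\widehat{\mathcal{Q}} = \bigl(\widehat{\mathcal{P}}_D^{-1/2}\widehat{\mathcal{A}}\,\widehat{\mathcal{P}}_D^{-1/2}\bigr) + (-\mathcal{E})$ with both summands symmetric, each eigenvalue of $\widehat{\mathcal{Q}}$ equals an eigenvalue of $\widehat{\mathcal{P}}_D^{-1/2}\widehat{\mathcal{A}}\,\widehat{\mathcal{P}}_D^{-1/2}$ plus an eigenvalue of $-\mathcal{E}$, up to the interlacing slack — more precisely, $\lambda_i(\widehat{\mathcal{Q}})$ lies within $[\lambda_{\min}(-\mathcal{E}), \lambda_{\max}(-\mathcal{E})]$ of the interval of eigenvalues of $\widehat{\mathcal{P}}_D^{-1/2}\widehat{\mathcal{A}}\,\widehat{\mathcal{P}}_D^{-1/2}$, which is contained in $\mathcal{I}_{N+1}$. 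Since $\sigma(-\mathcal{E}) \subset [\sigma^-, \sigma^+]$, every eigenvalue of $\widehat{\mathcal{Q}}$, and hence of $\widehat{\mathcal{P}}_D^{-1}\mathcal{A}$, lies in the Minkowski sum $\mathcal{I}_{N+1} + [\sigma^-, \sigma^+]$, as claimed.

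The main obstacle — really the only nontrivial point — is verifying the first step, namely that the $\widehat{S}_k$ defined recursively through \eqref{ApproxSi} are genuinely the exact block-Schur-complement preconditioner for the perturbed matrix $\widehat{\mathcal{A}}$; this is precisely what the careful bookkeeping with the perturbed Schur complements $\widetilde{S}_k$ and the sign factors $(-1)^k$ in the definition of $\Delta_k$ is designed to make work, and one must also check that the sign/definiteness hypotheses of Theorem \ref{theo2} carry over (the $\widehat{A}_k$ symmetric positive semi-definite, $\widehat{A}_0$ symmetric positive definite, $\widehat{S}_k$ symmetric positive definite), which is exactly what is assumed. Everything after that is the standard Weyl bound applied in the $\widehat{\mathcal{P}}_D^{-1/2}$-symmetrized coordinates.
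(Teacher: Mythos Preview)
Your proof is correct and follows the same backward-analysis strategy as the paper: show that $\widehat{\mathcal{P}}_D$ is the exact Schur-complement preconditioner for the perturbed matrix $\widehat{\mathcal{A}}$ (this is Lemma~\ref{lem:SCs_of_perturbed_A} in the paper), pass to the symmetric form $\widehat{\mathcal{P}}_D^{-1/2}(\cdot)\widehat{\mathcal{P}}_D^{-1/2}$, and then apply Weyl's inequality to the additive decomposition $\widehat{\mathcal{Q}} = \widehat{\mathcal{P}}_D^{-1/2}\widehat{\mathcal{A}}\,\widehat{\mathcal{P}}_D^{-1/2} + (-\mathcal{E})$.

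The one difference worth noting is in the Weyl step. You use the simplest form of Weyl, $\lambda_i(M_1)+\lambda_n(M_2)\le\lambda_i(M_1+M_2)\le\lambda_i(M_1)+\lambda_1(M_2)$, applied to every index $i$; since each $\lambda_i(M_1)\in\mathcal{I}_{N+1}$, this immediately yields $\lambda_i(\widehat{\mathcal{Q}})\in\mathcal{I}_{N+1}+[\sigma^-,\sigma^+]$. The paper instead first invokes Sylvester's law of inertia on the block-$LDL^\top$ factorization of $\mathcal{A}$ to identify which indices correspond to the smallest/largest positive and negative eigenvalues, and then applies the more general form of Weyl to each of these four extremal indices. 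This extra work buys something slightly stronger than the stated theorem: the positive eigenvalues of $\widehat{\mathcal{P}}_D^{-1}\mathcal{A}$ land in the positive component of $\mathcal{I}_{N+1}$ shifted by $[\sigma^-,\sigma^+]$, and likewise for the negative ones. For the Minkowski-sum inclusion as actually stated, however, your streamlined argument is entirely sufficient and arguably cleaner.
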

%{\color{red} [JP: if my previous comment is valid, should $(-1)^k$ be $(-1)^{k+1}$ above? Also, in the Minkowski sum, 
%should the superscript $N$ be $N+1$?]} 
We present two lemmas before we prove Theorem \ref{thm:perturbation}, the idea for the proof of which is based on a backward analysis argument that asks: For which system matrix is the approximate preconditioner an ideal Schur complement preconditioner, to which Theorem \ref{theo2} applies?
\begin{Lemma} \label{lem:SCs_of_perturbed_A}
  The exact Schur complements of $\widehat{\mathcal{A}}$ are $\widehat{S}_i$ for $i=0, \dotsc, N$.
\end{Lemma}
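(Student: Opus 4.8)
The plan is to prove Lemma~\ref{lem:SCs_of_perturbed_A} by induction on the block index $i$, directly unwinding the recursive definition of exact Schur complements applied to $\widehat{\mathcal{A}}$. Recall that $\widehat{\mathcal{A}}$ has the same off-diagonal blocks $B_k, B_k^\top$ as $\mathcal{A}$, while its diagonal blocks are $\widehat{A}_i = (-1)^i A_i + \Delta_i = (-1)^i A_i + (-1)^i(\widehat{S}_i - \widetilde{S}_i) = (-1)^i(A_i + \widehat{S}_i - \widetilde{S}_i)$ for $i = 0, \dots, N$ (for $i=0$ this reads $\widehat{A}_0 = A_0 + \widehat{S}_0 - \widetilde{S}_0 = \widehat{S}_0$ since $\widetilde{S}_0 = S_0 = A_0$). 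The recursive Schur complements $\widehat{\Sigma}_i$ of $\widehat{\mathcal{A}}$, in the sense matching the definition of $\mathcal{P}_D$, are $\widehat{\Sigma}_0 = \widehat{A}_0$ and $\widehat{\Sigma}_i = (-1)^i \widehat{A}_i + B_i \widehat{\Sigma}_{i-1}^{-1} B_i^\top$ for $i \ge 1$ --- here the sign factor $(-1)^i$ accounts for the fact that the $i$th diagonal block of $\mathcal{A}$ is $(-1)^i A_i$, so that the correctly-signed diagonal block entering the Schur complement recurrence is $(-1)^i$ times the literal block.

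The base case is $\widehat{\Sigma}_0 = \widehat{A}_0 = \widehat{S}_0$, as computed above. For the inductive step, assume $\widehat{\Sigma}_{i-1} = \widehat{S}_{i-1}$. Then
\[
  \widehat{\Sigma}_i = (-1)^i \widehat{A}_i + B_i \widehat{\Sigma}_{i-1}^{-1} B_i^\top
  = \bigl(A_i + \widehat{S}_i - \widetilde{S}_i\bigr) + B_i \widehat{S}_{i-1}^{-1} B_i^\top.
\]
Now invoke the definition~\eqref{ApproxSi} of the perturbed Schur complement, $\widetilde{S}_i = A_i + B_i \widehat{S}_{i-1}^{-1} B_i^\top$, so that $A_i + B_i \widehat{S}_{i-1}^{-1} B_i^\top = \widetilde{S}_i$, and the right-hand side collapses to $\widetilde{S}_i + \widehat{S}_i - \widetilde{S}_i = \widehat{S}_i$. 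This closes the induction and yields $\widehat{\Sigma}_i = \widehat{S}_i$ for all $i = 0, \dots, N$, which is exactly the claim.

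The only subtlety --- and the step I would be most careful about --- is bookkeeping the alternating signs $(-1)^i$ consistently between the ``literal'' diagonal blocks of the block-tridiagonal matrix and the ``effective'' blocks that feature in the Schur complement recurrence that defines $\mathcal{P}_D$; the cancellation of $\widetilde{S}_i$ in the inductive step is what makes the construction work, and it is engineered precisely by the choice $\Delta_k = (-1)^k(\widehat{S}_k - \widetilde{S}_k)$ in the definition of $\widehat{\mathcal{A}}$. Well-definedness of each $\widehat{\Sigma}_i$ (i.e.\ invertibility of $\widehat{S}_{i-1}$, needed to form $\widehat{S}_{i-1}^{-1}$) is guaranteed by the hypothesis in Theorem~\ref{thm:perturbation} that all $\widehat{S}_k$ are symmetric positive definite; strictly this hypothesis belongs to the theorem, but since the lemma is only used in that context I would either carry it along or simply state the lemma under the standing assumption that the $\widehat{S}_k$ are invertible.
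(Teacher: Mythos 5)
Your proof is correct and follows the same inductive strategy as the paper: establish $\widehat{A}_0 = \widehat{S}_0$ as the base case, then use $\widetilde{S}_i = A_i + B_i \widehat{S}_{i-1}^{-1} B_i^\top$ from~\eqref{ApproxSi} to make the inductive step telescope to $\widehat{S}_i$. The only (cosmetic) difference is sign bookkeeping---you read $\widehat{A}_i$ as the literal $(i,i)$ block and therefore carry an explicit $(-1)^i$ into the Schur recurrence, whereas the paper treats $\widehat{A}_i$ as the unsigned block (matching the conventions used for $\mathcal{A}$ and $\mathcal{P}_D$), so no sign factor appears; both conventions produce the identical cancellation, and your remark about needing invertibility of $\widehat{S}_{i-1}$ (supplied by the hypotheses of Theorem~\ref{thm:perturbation}) is a fair point to flag.
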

\begin{proof}
	We proceed by induction. For $i=0$, we observe that $\widehat{A}_0 = A_0 + \widehat{S}_0 - \widetilde{S}_0 = \widehat{S}_0$. For $i > 0$, we exploit the inductive hypothesis ($\widehat{S}_{i-1}$ is an exact Schur complement of $\widehat{\mathcal{A}}$) and the definitions of $\widehat{A}_i$ and $\widetilde{S}_i$ to obtain for the $i$-th Schur complement of $\widehat{\mathcal{A}}$ that
	\[
		\widehat{A}_i + B_i \widehat{S}_{i-1}^{-1} B_i^{\top}
= A_i + \widehat{S}_i - \widetilde{S}_i + B_i \widehat{S}_{i-1}^{-1} B_i^{\top}
= A_i + \widehat{S}_i - A_i - B_i \widehat{S}_{i-1}^{-1} B_i^{\top} + B_i \widehat{S}_{i-1}^{-1} B_i^{\top} = \widehat{S}_i. \qedhere
\]
\end{proof}
\begin{Lemma}[Weyl's inequality] \label{lem:Weyl}
  Let $M_1$ and $M_2$ be Hermitian $n$-by-$n$ matrices. %and $M_3 = M_1 + M_2$. 
  Denote their eigenvalues by $\lambda_1(\cdot) \ge \ldots \ge \lambda_n(\cdot)$. Then
	\begin{align*}
		\lambda_{i+j-1}(M_1 + M_2) &\le \lambda_i(M_1) + \lambda_j(M_2), \qquad  i+j \le n+1,\\
		\lambda_i(M_1) + \lambda_j(M_2) &\le \lambda_{i+j-n}(M_1 + M_2), \quad\ \, \, i+j \ge n+1.
	\end{align*}
\end{Lemma}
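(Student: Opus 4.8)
The plan is to obtain both inequalities from the Courant--Fischer min--max characterization of the eigenvalues of a Hermitian matrix. Recall that for a Hermitian $n\times n$ matrix $M$ with eigenvalues $\lambda_1(M)\ge\cdots\ge\lambda_n(M)$, and writing $R_M(x)=\dfrac{x^{*}Mx}{x^{*}x}$, one has for every $k\in\{1,\dots,n\}$
\[
\lambda_k(M)=\max_{\dim S=k}\ \min_{0\ne x\in S}R_M(x)=\min_{\dim S=n-k+1}\ \max_{0\ne x\in S}R_M(x),
\]
the maxima and minima being taken over subspaces $S\subseteq\mathbb{C}^{n}$ of the indicated dimension.

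For the first inequality, fix $i,j$ with $i+j\le n+1$ and put $k=i+j-1$. Let $U_1$ be the span of the eigenvectors of $M_1$ associated with its $n-i+1$ smallest eigenvalues $\lambda_i(M_1),\dots,\lambda_n(M_1)$; then $R_{M_1}(x)\le\lambda_i(M_1)$ for every $0\ne x\in U_1$ and $\dim U_1=n-i+1$. Analogously let $U_2$ be the span of the eigenvectors of $M_2$ associated with $\lambda_j(M_2),\dots,\lambda_n(M_2)$, so that $R_{M_2}(x)\le\lambda_j(M_2)$ on $U_2$ and $\dim U_2=n-j+1$. The dimension count $\dim(U_1\cap U_2)\ge(n-i+1)+(n-j+1)-n=n-k+1$ shows that $U_1\cap U_2$ contains a subspace $S$ of dimension exactly $n-k+1$, on which $R_{M_1+M_2}(x)=R_{M_1}(x)+R_{M_2}(x)\le\lambda_i(M_1)+\lambda_j(M_2)$. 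Feeding $S$ into the second (``$\min$ over $(n-k+1)$-dimensional subspaces'') Courant--Fischer formula for $\lambda_k(M_1+M_2)$ gives
\[
\lambda_{i+j-1}(M_1+M_2)\le\max_{0\ne x\in S}R_{M_1+M_2}(x)\le\lambda_i(M_1)+\lambda_j(M_2),
\]
which is the claimed bound.

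For the second inequality I would apply the first one to the Hermitian matrices $-M_1$ and $-M_2$, use $\lambda_k(-M)=-\lambda_{n+1-k}(M)$, and relabel: with $i'=n+1-i$, $j'=n+1-j$, the hypothesis $i+j\le n+1$ becomes $i'+j'\ge n+1$ and the inequality rearranges to $\lambda_{i'}(M_1)+\lambda_{j'}(M_2)\le\lambda_{i'+j'-n}(M_1+M_2)$, exactly as stated. (Equivalently, one repeats the argument above with the two Courant--Fischer formulas interchanged, taking $U_1,U_2$ to be spans of eigenvectors for the \emph{largest} eigenvalues.) This is the classical proof and presents no genuine obstacle; the only points requiring care are the dimension count for $U_1\cap U_2$ and matching the ``$\max$ over $k$-dimensional'' versus ``$\min$ over $(n-k+1)$-dimensional'' form of Courant--Fischer so that the inequality points in the correct direction, together with consistent bookkeeping of the decreasing-order convention $\lambda_1\ge\cdots\ge\lambda_n$.
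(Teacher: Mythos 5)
Your proof is correct and is precisely the Courant--Fischer argument that the paper delegates to \cite[Sec.~1.3.3]{tao2012topics}: the dimension count for $U_1\cap U_2$, the use of the min-over-$(n-k+1)$-dimensional-subspaces form, and the reduction of the second inequality to the first via $\lambda_k(-M)=-\lambda_{n+1-k}(M)$ are all standard and sound. Nothing to add.
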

\begin{proof}
  A proof based on the Courant--Fischer min--max theorem can be found in \cite[Sec. 1.3.3]{tao2012topics}.
\end{proof}
\noindent \emph{Proof of Theorem \ref{thm:perturbation}.}
The approximately preconditioned matrix can be written as
%\textcolor{red}
{
\[
\widehat{\mathcal{P}}_D^{-1} \mathcal{A}
= \widehat{\mathcal{P}}_D^{-1} \widehat{\mathcal{A}}
%%- \text{blkdiag}\left(I - \widehat{S}_0^{-1} \widetilde{S}_0, \dotsc, I - \widehat{S}_{N}^{-1} \widetilde{S}_N\right).
- \text{blkdiag}\left(\widehat{S}_0^{-1} \Delta_0, \dotsc, \widehat{S}_{N}^{-1} \Delta_N\right).
\]
A similarity transformation with $\widehat{\mathcal{P}}_D^{1/2}$ yields the spectrally equivalent symmetric matrices
\[
\widehat{\mathcal{P}}_D^{-1/2} \mathcal{A} \widehat{\mathcal{P}}_D^{-1/2}
= \widehat{\mathcal{P}}_D^{-1/2} \widehat{\mathcal{A}} \widehat{\mathcal{P}}_D^{-1/2}
- \text{blkdiag}\left(\widehat{S}_0^{-1/2} \Delta_0 \widehat{S}_0^{-1/2}, \dotsc, \widehat{S}_{N}^{-1/2} \Delta_N \widehat{S}_N^{-1/2}\right).
\]
}
We can now apply Weyl's inequality. To find the relevant pairs of $i$ and $j$, we recall the decomposition
\[
{\mathcal{A}} =
\begin{bmatrix}
  I \\
  B_1 {S}_0^{-1} & -I \\
  & \ddots & \ddots \\
  & & B_N {S}_{N-1}^{-1} & (-1)^N I
\end{bmatrix}
\begin{bmatrix}
  {S}_0 \\
  & -{S}_1 \\
  & & \ddots \\
  & & & (-1)^N {S}_N
\end{bmatrix}
\begin{bmatrix}
  I & {S}_0^{-1} B_1^\top \\
  & -I & \ddots \\
  & & \ddots & {S}_{N-1}^{-1} B_N^\top \\
  & & & (-1)^N I
\end{bmatrix}.
\]
Sylvester's law of inertia now tells us that ${\mathcal{A}}$ and hence also $\widehat{\mathcal{P}}_D^{-1/2} {\mathcal{A}} \widehat{\mathcal{P}}_D^{-1/2}$ have $n^+ = \sum_{k \text{ even}} n_k$ positive, $n - n^+$ negative, and no zero eigenvalues, where $n = \sum_{k=1}^N n_k$.

Consequently, the indices of the extremal eigenvalues of the symmetrically preconditioned matrix $M_1 + M_2$ with $M_1 = \widehat{\mathcal{P}}_D^{-1/2} \widehat{\mathcal{A}} \widehat{\mathcal{P}}_D^{-1/2}$ and 
%\textcolor{red}
{
	\[M_2 = -\text{blkdiag}\left(\widehat{S}_0^{-1/2} \Delta_0 \widehat{S}_0^{-1/2}, \dotsc, \widehat{S}_{N}^{-1/2} \Delta_N \widehat{S}_N^{-1/2}\right)
 = \text{blkdiag}\left(\widehat{S}_0^{-1/2} \widetilde S_0 \widehat{S}_0^{-1/2}- I, 
 I - \widehat{S}_1^{-1/2} \widetilde S_1 \widehat{S}_1^{-1/2}, \dotsc\right) \]
}
are $1$ (largest positive), $n^+$ (smallest positive), $n^+ + 1$ (largest negative), and $n$ (smallest negative).

For the largest positive eigenvalue, Weyl's inequality for $i + j - 1 = 1$ provides the upper bound
\[
\lambda_1(M_1 + M_2) \le \lambda_1(M_1) + \lambda_{1}(M_2).
\]
For the smallest negative eigenvalue ($i + j - n = n$) we obtain
\[
\lambda_n(M_1 + M_2) \ge \lambda_{n}(M_1) + \lambda_{n}(M_2).
\]
We now move to the eigenvalues closest to zero. For the smallest positive eigenvalue, Weyl's inequality ($i + j - n = n^+$) delivers the lower bound
\[
\lambda_{n^+}(M_1 + M_2) \ge \lambda_{n^+}(M_1) + \lambda_{n}(M_2).
\]
Likewise, we obtain for the largest negative eigenvalue ($i + j - 1 = n^+ + 1$)
\[
\lambda_{n^+ + 1}(M_1 + M_2) \le \lambda_{n^+ + 1}(M_1) + \lambda_{1}(M_2).
\]
Noting that the eigenvalues of $M_1$ can be characterized by Theorem \ref{theo2} finishes the proof. \hfill \qed

\subsection{Refined bounds for the double saddle-point systems with inexact Schur complements}\label{sec:DSPAnalysis}

We now consider bespoke eigenvalue bounds when preconditioning the linear system in the case $N=2$:
\begin{equation*}%\label{Eq1}
        \mathcal{A}w = b, \qquad \text{where} \quad \mathcal{A}  =
        \begin{bmatrix}
        A_0 & B_1^{\top} & 0\\
        B_1 & -A_1 & B_2^{\top}\\
0 & B_2 & A_2\end{bmatrix}.
\end{equation*}
%We define the following approximate Schur complements
%\begin{align*}
%	%\label{hattilde}
%	\widehat{S}_0 \approx{}&  A_0, && \nonumber \\
%	\widetilde{S}_i ={}&  A_i + B_i \widehat S_{i-1}^{-1} B_i^\top, &\widehat{S}_i  \approx{}& \widetilde{S}_i, \quad i = 1, 2.
%\end{align*}
We define the Schur complements as in \eqref{ApproxSi} to form an inexact block diagonal preconditioner $\widehat{\mathcal{P}}_D=\text{blkdiag}(\widehat S_0,  \widehat S_1, \widehat S_2)$, with $\widehat S_0$, $\widehat S_1$, $\widehat S_2$ symmetric positive definite.
%Note that ${\mathcal {Q}}$ is  the inexact counterpart of ${\mathcal{P}_3$, defined in (\ref{Eq7}), with positive $(3,3)$ block.

We analyze the eigenvalue distribution  of the preconditioned matrix
$\widehat{\mathcal{P}}_D^{-1}\mathcal{A}$ and relate its spectral properties
to the extremal eigenvalues of
$\overline{S}_i \equiv \widehat S_i^{-1/2} S_i \widehat S_i^{-1/2}$, for $i=0, 1, 2$.
Finding the eigenvalues of $\widehat{\mathcal{P}}_D^{-1}\mathcal{A}$ is equivalent to solving
$$\mathcal{Q}_{\text{in}} \equiv \widehat \P_D^{-1/2} \mathcal A \widehat \P_D^{-1/2} v = \lambda v,
\qquad v = \begin{bmatrix} x\\y\\z\end{bmatrix},$$ 
	where $x$, $y$, and $z$ denote vectors of length $n_0$, $n_1$, and $n_2$, respectively.
Exploiting the block components of this generalized eigenvalue problem, we obtain
\begin{equation*}%\label{Eq31}
	\begin{bmatrix}E_0  &  R_1^\top  &  0\\
		R_1  &  -E_1  &  R_2^\top\\
	0  &  R_2  & E_2 \end{bmatrix}
		\begin{bmatrix}x\\y\\z  \end{bmatrix}=\lambda
\begin{bmatrix}x\\y\\z  \end{bmatrix}.
\end{equation*}
where \begin{align*} E_i&=\widehat S_i^{-1/2}A_i\widehat S_i^{-1/2}, \quad i = 0, 1, 2,\\
	R_i &=\widehat S_i^{-1/2}B_i\widehat S_{i-1}^{-1/2}, \quad i = 1, 2.
\end{align*}
Notice that
	\begin{equation*}% \label{RRE2} 
    R_i R_i^\top = \widehat S_i^{-1/2} \left(\widetilde S_i - A_i\right) \widehat S_i^{-1/2} =
	\overline{S}_i - E_i. 
    \end{equation*}
	%R R^\top ={}&  \widehat S^{-1/2} \widetilde S \widehat S^{-1/2} %-\widehat D
	%\equiv S_{\text{prec}}, \\
	%K K^\top ={}&  \widehat X^{-1/2}  \left(\widetilde X -E\right) \widehat X^{-1/2} = \widehat X^{-1/2} \widetilde X \widehat X^{-1/2} -\overline{E} \equiv X_{\text{prec}} - \overline{E}.
%\end{align*}
We make use of the following indicators:
        \begin{align*}
		\label{indicators}
		\alpha_E^{(i)}  & \equiv \lambda_{\min} (E_i),  & \beta_E^{(i)}  & \equiv \lambda_{\max} (E_i), & \gamma_E^{(i)}(w_{k_i})
		&=\frac{w_{n_i}^\top E_i w_{n_i}}{w_{n_i}^\top w_{n_i}}  \in [ \alpha_{E_i},  \beta_{E_i} ] \equiv \widehat{\mathcal{I}}_{E_i},  \quad i = 0, 1, 2,  \nonumber\\[-.9em]
		&& \\[-.9em]
		\alpha_R^{(i)}  & \equiv \lambda_{\min} (R_i R_i^\top),  & \beta_R^{(i)}  & \equiv \lambda_{\max} (R_i R_i^\top), & \gamma_R^{(i)}(w_{n_i})
		&= \frac{w_{n_i}^\top R_i R_i^\top w_{n_i}}{w_{n_i}^\top w_{n_i}} \in [ \alpha_{R_i},  \beta_{R_i} ] \equiv \widehat{\mathcal{I}}_{R_i},  \quad i = 1, 2, \nonumber
%		\alpha_S^{(i)}  & \equiv \lambda_{\min} (\overline S_i),  & \beta_S^{(i)}  & \equiv \lambda_{\max} (\overline S_i), & \gamma_S^{(i)}(w_{n_i})
%		&= \frac{w_{n_i}^\top \overline S_i w_{n_i}}{w_{n_i}^\top w_{n_i}} \in [ \alpha_{S_i},  \beta_{S_i} ] \equiv \widehat{\mathcal{I}}_{S_i},  \quad i = 1, 2. \nonumber \\[.3em]
\end{align*}
%From \eqref{RRE2} it easily holds that
%$\gamma_S^{(i)}(w_{n_i}) = \gamma_E^{(i)}(w_{n_i}) + \gamma_R^{(i)}(w_{n_i}), \ i = 1,2.$
In the following, to make the notation easier, we remove the argument $w_*$ whenever one of the previous indicators is used.
		Finally, we define the vectors
		\[\boldsymbol{\gamma}_E = \begin{bmatrix} \gamma_E^{(0)}, \gamma_E^{(1)}, \gamma_E^{(2)} \end{bmatrix}, \quad
			\boldsymbol{\gamma}_R = \begin{bmatrix} \gamma_R^{(1)}, \gamma_R^{(2)} \end{bmatrix}.  \]
%We finally make the following assumptions:
%\begin{equation}
	%\label{spect_int}
	%\gamma_{\min}^A < 1 < \gamma_{\max}^A , \qquad 1 \in \widehat{\mathcal{I}}_S, \qquad 1 \in \widehat{\mathcal{I}}_X. %, \qquad \gamma_{\max}^A < 2.
%%	\begin{array}{lclclcl}
	%%0 & < & \gamma_{\min}^A  & < & 1 & < & \gamma_{\max}^A < 2\\
	%%0 & < & \gamma_{\min}^R  & < & 1 & < & \gamma_{\max}^R  \\
%%0 & < & \gamma_{\min}^X  & < & 1 & < & \gamma_{\max}^X .
	%%\end{array}
%\end{equation}

%Exploiting \eqref{Eq31} yields:
%\begin{align}
%&\widetilde{A}x-\lambda x=(\lambda -1) R^\top, \label{Eq32}\\
	%&(1-\lambda) Rx-\left(\widehat D + \lambda(I + R R^\top) \right) y =   - (1+\lambda )K^\top, \label{Eq33}\\
	%&(1+ \lambda) Ky + \left(\widehat E - \lambda(I + K K^\top)\right) z =0. \label{Eq34}
%\end{align}

\subsection*{Spectral analysis}
The eigenvalue problem \eqref{dpsp} then reads
\begin{align}
	(E_0-\lambda I)x  &~+& R_1^\top y                &  &                           &~=& 
    0, \nonumber\\%\label{Eq32.1}\\
	R_1x                    &~-& (E_1 + \lambda I) y &~+ & R_2^\top z                &~=& 0, \label{Eq33.1}\\
				& & R_2y                      &~+ & (E_2  - \lambda I) z&~=& 0. \label{Eq34.1}
\end{align}
We first concentrate on a classical saddle-point linear system. Letting
	\[\mathcal{A}_{SP} =
\begin{bmatrix}
	A_0 & B_1^{\top} \\
	B _1& -A_1\\
\end{bmatrix}, \qquad
	\widehat{\mathcal{P}}_{SP} =
\begin{bmatrix}
	\widehat{S}_0 & 0 \\
	0 & \widehat{S}_1\\
\end{bmatrix},  \]
		then the eigenvalues of
$\widehat{\mathcal{P}}_{SP}^{-1}\mathcal{A}_{SP}$ are the same as those of
\begin{equation}\label{Eq31.0}
        \begin{bmatrix}E_0  &  R_1^\top  \\
	R_1 &  - E_1  \\ \end{bmatrix}
                \begin{bmatrix}x\\y  \end{bmatrix}=\lambda
\begin{bmatrix}x\\y \end{bmatrix}.
\end{equation}

			The following theorem characterizes the eigenvalues of the preconditioned matrix $\mathcal{A}_{SP}$,
			and hence a classical saddle-point linear system preconditioned by ${\widehat{\mathcal{P}}_{SP}}$, in terms
			of the previously defined indicators.
The findings of this theorem also constitute the basis for the proof of \Cref{Theo:pi}.
\begin{Theorem} \label{Theo:p}
	\label{saddle-point}
	The eigenvalues of $\widehat{\mathcal{P}}_{SP}^{-1}\mathcal{A}_{SP}$  are either contained in $\widehat{\mathcal{I}}_{E_0}$, or they
	are the roots of the $(\boldsymbol \gamma_E, \boldsymbol \gamma_R)$-parametric family of polynomials
	\begin{equation*}% \label{plambda}
		p(\lambda; \boldsymbol \gamma_E, \boldsymbol \gamma_R) \equiv \lambda^2 - \lambda(\gamma_E^{(0)}  - \gamma_E^{(1)})  -\gamma_R^{(1)} -\gamma_E^{(0)} \gamma_E^{(1)}, \quad  \text{for }~\gamma_E^{(*)} \in \widehat{\I}_{E_*},~\gamma_R^{(*)} \in \widehat{\I}_{R_*}
\end{equation*}
%\begin{Remark}[Notation]
	%For quantities which depend on $\gamma$-indicators we will frequently suppress the $\gamma$-indicator arguments unless they are substituted by special values. Such quantities can be polynomials or their roots. For instance, we will frequently write $p(\lambda)$ as shorthand for $p(\lambda; \gamma_E^{(0)}, \gamma_R^{(1)})$ but may explicitly write, e.g., $p(\lambda; \alpha_E^{(0)}, \beta_R^{(1)})$.
%\end{Remark}
%%	More precisely, denoting as $\lambda_-$ and $\lambda_+$ the two roots of $p(\lambda)$,
	%with
%\[ \lambda_{+,-} = \frac{\gamma_E^{(0)} \gamma_R^{(1)} + \gamma_E^{(0)} - 2\gamma_R^{(1)} \pm \sqrt {\left(\gamma_E^{(0)} \gamma_R + \gamma_E^{(0)} - 2\gamma_R\right)^2- 4 \gamma_R }}{2}, \]
	%the following bounds hold:
	%\[ \lambda \in \widehat{\mathcal{I}}_- \cup \widehat{\mathcal{I}}_+, \qquad \widehat{\mathcal{I}}_- = [\rho_0, \rho_1], \ \widehat{\mathcal{I}}_+ = [\rho_2, \rho_3],
		%\qquad \text{where} \]
	%\begin{eqnarray*}
                %\rho_0 =
                %\lambda_-(\gamma_{\min}^A, \gamma_{\max}^R) &\qquad  &  \rho_1 = \lambda_-(\gamma_{\max}^A, \gamma_{\min}^R) \\
                %\rho_2 = \lambda_+(\gamma_{\min}^A, \gamma_{\min}^R) &\qquad  &  \rho_3 = \lambda_+(\gamma_{\max}^A, \gamma_{\max}^R).
        %\end{eqnarray*}
\end{Theorem}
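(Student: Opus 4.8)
The plan is to follow the blueprint of \Cref{theo2} specialized to two blocks, distinguishing according to whether the lower block component of the eigenvector vanishes. Let $[x^\top, y^\top]^\top \ne 0$ solve \eqref{Eq31.0} for the eigenvalue $\lambda$. If $y = 0$, the first block row forces $(E_0 - \lambda I)x = 0$ with $x \ne 0$, so $\lambda \in \sigma(E_0) \subseteq \widehat{\mathcal{I}}_{E_0}$ and the first alternative holds. If $y \ne 0$ but $\lambda \in \widehat{\mathcal{I}}_{E_0}$ there is again nothing to prove, so the substantive case is $y \ne 0$ and $\lambda \notin \widehat{\mathcal{I}}_{E_0}$.

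In that case, $E_0$ being symmetric with spectrum in $[\alpha_E^{(0)}, \beta_E^{(0)}]$ makes $E_0 - \lambda I$ definite, hence invertible, with $0 \notin [\min \sigma(E_0 - \lambda I), \max \sigma(E_0 - \lambda I)]$, so \Cref{Le1} applies to it. I would eliminate $x = -(E_0 - \lambda I)^{-1} R_1^\top y$ from the first block row and insert it into the second, obtaining
\[
\left( -R_1 (E_0 - \lambda I)^{-1} R_1^\top - E_1 - \lambda I \right) y = 0,
\]
and then take the Rayleigh quotient against $y$, which yields
\[
0 = -\frac{y^\top R_1 (E_0 - \lambda I)^{-1} R_1^\top y}{y^\top y} - \gamma_E^{(1)} - \lambda,
\qquad \gamma_E^{(1)} = \frac{y^\top E_1 y}{y^\top y} \in \widehat{\mathcal{I}}_{E_1}.
\]

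The crux is rewriting the first term. Since $B_1$ (hence $R_1$) has full row rank, $w := R_1^\top y \ne 0$, so
\[
\frac{y^\top R_1 (E_0 - \lambda I)^{-1} R_1^\top y}{y^\top y}
= \frac{w^\top w}{y^\top y} \cdot \frac{w^\top (E_0 - \lambda I)^{-1} w}{w^\top w}
= \gamma_R^{(1)} \cdot \frac{w^\top (E_0 - \lambda I)^{-1} w}{w^\top w},
\]
with $\gamma_R^{(1)} = y^\top R_1 R_1^\top y / (y^\top y) \in \widehat{\mathcal{I}}_{R_1}$; then \Cref{Le1} supplies a vector $v \ne 0$ with $w^\top (E_0 - \lambda I)^{-1} w / (w^\top w) = 1/(\gamma_E^{(0)} - \lambda)$, where $\gamma_E^{(0)} = v^\top E_0 v / (v^\top v) \in \widehat{\mathcal{I}}_{E_0}$ and $\gamma_E^{(0)} - \lambda \ne 0$ because $\lambda \notin \widehat{\mathcal{I}}_{E_0}$. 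Substituting and multiplying through by the nonzero factor $\gamma_E^{(0)} - \lambda$ collapses everything to
\[
0 = -\gamma_R^{(1)} - (\gamma_E^{(1)} + \lambda)(\gamma_E^{(0)} - \lambda)
= \lambda^2 - \lambda(\gamma_E^{(0)} - \gamma_E^{(1)}) - \gamma_R^{(1)} - \gamma_E^{(0)} \gamma_E^{(1)},
\]
i.e.\ $p(\lambda; \boldsymbol{\gamma}_E, \boldsymbol{\gamma}_R) = 0$ for these admissible indicator values (each obtained as a Rayleigh quotient of the corresponding symmetric matrix), giving the second alternative.

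I expect the only real care to be in the bookkeeping that justifies invoking \Cref{Le1}: verifying definiteness of $E_0 - \lambda I$ off the interval $\widehat{\mathcal{I}}_{E_0}$, and confirming $R_1^\top y \ne 0$ from the full-rank assumption on $B_1$, so that the ratio $w^\top w / (y^\top y)$ is a genuine Rayleigh quotient of $R_1 R_1^\top$ and the substitution $w = R_1^\top y$ is legitimate. Everything else is the elimination of $x$ and a short algebraic rearrangement, exactly as in the proof of \Cref{theo2}.
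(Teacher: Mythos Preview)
Your proof is correct and follows essentially the same route as the paper: eliminate $x$ from the first block row (using definiteness of $E_0-\lambda I$ when $\lambda\notin\widehat{\mathcal I}_{E_0}$), take the Rayleigh quotient of the resulting equation against $y$, factor the first term via $w=R_1^\top y$, and invoke \Cref{Le1} to reduce to $p(\lambda)=0$. If anything, your version is slightly more meticulous, since you explicitly dispose of the $y=0$ case and justify $R_1^\top y\ne 0$ from the full-rank assumption on $B_1$, points the paper leaves implicit.
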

\begin{proof}
	Assume that $\lambda \not \in \widehat{\mathcal{I}}_{E_0}$.
	Then from the first row of \eqref{Eq31.0} we obtain
\begin{equation}\label{Eq65}
	x = (\lambda I -E_0)^{-1} R_1^\top y.
\end{equation}
	Inserting (\ref{Eq65}) into the second row of \eqref{Eq31.0} yields
\begin{equation*}
	\underbrace{\left( R_1 (\lambda I -  E_0)^{-1} R_1^\top - \lambda I - E_1 \right)}_{\text{\normalsize$Y(\lambda) $}} y = 0.
\end{equation*}
	Applying Lemma \ref{Le1} to $Z(\lambda) = \lambda I - E_0$ and setting $u = R_1^\top y$ yields
	\begin{align}
		\nonumber 0 ={}& 	\frac{y^\top Y(\lambda) y}{y^\top y} =
		\frac{y^\top R_1 (\lambda I -  E_0)^{-1} R_1^\top y}{y^\top y} - \lambda  - \frac{y^\top E_1 y}
		{y^\top y}  \\
		    \label{yZy}  ={}& \frac{u^\top (\lambda I -  E_0)^{-1} u}{u^\top u}\frac{y^\top R_1 R_1^\top y}{y^\top y} - \lambda - \gamma_E^{(1)} = \frac{\gamma_R^{(1)}}{\lambda - \gamma_E^{(0)}} - \lambda - \gamma_E^{(1)} = \frac{p(\lambda)}{\gamma_E^{(0)}-\lambda},
    \end{align}
	applying the shorthand $p(\lambda)$ for $p(\lambda; \boldsymbol \gamma_E, \boldsymbol \gamma_R)$, i.e.,
		\[p(\lambda) = \lambda^2 - (\gamma_E^{(0)} - \gamma_E^{(1)}) \lambda- \gamma_R^{(1)} -\gamma_E^{(0)} \gamma_E^{(1)}.\]
\end{proof}
We can finally bound the roots of $p(\lambda)$:
\begin{Lemma}
	\label{Lem:optp}
	Denote $\lambda_-(\gamma_E^{(0)}, \gamma_E^{(1)}, \gamma_R^{(1)})$ and $\lambda_+(\gamma_E^{(0)}, \gamma_E^{(1)}, \gamma_R^{(1)})$ as the two roots of $p(\lambda)$, that is
	\[ \lambda_\pm = \frac{1}{2} (\gamma_E^{(0)} -\gamma_E^{(1)}) \pm \sqrt{\frac{1}{4} (\gamma_E^{(0)} +\gamma_E^{(1)})^2 +\gamma_R^{(1)}}. \]
	Then, $\lambda \in \widehat{\mathcal{I}}_- \cup \widehat{\mathcal{I}}_+, \quad$ where \\
	\begin{equation*}
		\widehat{\mathcal{I}}_- = \left[\lambda_-(\alpha_E^{(0)}, \beta_E^{(1)}, \beta_R^{(1)}) , \lambda_-(\beta_E^{(0)}, \alpha_E^{(1)}, \alpha_R^{(1)})\right],  \quad
		\widehat{\mathcal{I}}_+ = \left[ \lambda_+(\alpha_E^{(0)}, \beta_E^{(1)}, \alpha_R^{(1)}),  \lambda_+(\beta_E^{(0)}, \alpha_E^{(1)}, \beta_R^{(1)}) \right].
		%\label{pbounds}
	\end{equation*}
\end{Lemma}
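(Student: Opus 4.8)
The plan is to combine the characterization in \Cref{Theo:p} with an elementary monotonicity analysis of the explicit formula for $\lambda_\pm$. By \Cref{Theo:p}, every eigenvalue of $\widehat{\mathcal{P}}_{SP}^{-1}\mathcal{A}_{SP}$ that does not lie in $\widehat{\mathcal{I}}_{E_0}$ is a root of $p(\cdot;\boldsymbol{\gamma}_E,\boldsymbol{\gamma}_R)$ for an admissible triple $(\gamma_E^{(0)},\gamma_E^{(1)},\gamma_R^{(1)}) \in \widehat{\mathcal{I}}_{E_0} \times \widehat{\mathcal{I}}_{E_1} \times \widehat{\mathcal{I}}_{R_1}$, so it suffices to show that, over this box, $\lambda_-$ takes values in $\widehat{\mathcal{I}}_-$ and $\lambda_+$ in $\widehat{\mathcal{I}}_+$. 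First I would record the quadratic-formula derivation, using the identity $(\gamma_E^{(0)}-\gamma_E^{(1)})^2 + 4\gamma_E^{(0)}\gamma_E^{(1)} = (\gamma_E^{(0)}+\gamma_E^{(1)})^2$, so that the two roots of $p$ are exactly the stated $\lambda_\pm$ with the quantity under the square root equal to $D := \tfrac14(\gamma_E^{(0)}+\gamma_E^{(1)})^2 + \gamma_R^{(1)}$.

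Next I would collect the sign facts that make the monotonicity work. Each $E_i = \widehat{S}_i^{-1/2} A_i \widehat{S}_i^{-1/2}$ is symmetric positive semi-definite, so $\gamma_E^{(0)},\gamma_E^{(1)} \ge 0$; and since $B_1$ has full rank with $n_1 \le n_0$, the matrix $R_1 R_1^\top$ is symmetric positive definite, whence $\gamma_R^{(1)} > 0$. Consequently $D \ge \tfrac14(\gamma_E^{(0)}+\gamma_E^{(1)})^2 > 0$, so $\sqrt D \ge \tfrac12(\gamma_E^{(0)}+\gamma_E^{(1)})$, the roots are real and distinct, and $\lambda_\pm$ are smooth functions of the three parameters on the (compact, connected) box.

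The core step is to differentiate $\lambda_\pm$ in each parameter. A direct computation gives
\[
\frac{\partial \lambda_\pm}{\partial \gamma_R^{(1)}} = \pm\frac{1}{2\sqrt D}, \qquad
\frac{\partial \lambda_\pm}{\partial \gamma_E^{(0)}} = \frac12 \pm \frac{\gamma_E^{(0)}+\gamma_E^{(1)}}{4\sqrt D}, \qquad
\frac{\partial \lambda_\pm}{\partial \gamma_E^{(1)}} = -\frac12 \pm \frac{\gamma_E^{(0)}+\gamma_E^{(1)}}{4\sqrt D},
\]
and the bound $(\gamma_E^{(0)}+\gamma_E^{(1)})/(4\sqrt D) \in [0,\tfrac12]$ from the previous paragraph shows that $\lambda_+$ is nondecreasing in $\gamma_E^{(0)}$, nonincreasing in $\gamma_E^{(1)}$, and increasing in $\gamma_R^{(1)}$, whereas $\lambda_-$ is nondecreasing in $\gamma_E^{(0)}$ and decreasing in both $\gamma_E^{(1)}$ and $\gamma_R^{(1)}$. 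Since $\lambda_\pm$ are separately monotone in each coordinate, their extrema over the box are attained at the corners listed in the statement --- for $\lambda_-$ the minimum at $(\alpha_E^{(0)},\beta_E^{(1)},\beta_R^{(1)})$ and the maximum at $(\beta_E^{(0)},\alpha_E^{(1)},\alpha_R^{(1)})$, and for $\lambda_+$ the analogous corners with $\alpha_R^{(1)}$ and $\beta_R^{(1)}$ interchanged --- and by connectedness of the box the range of each $\lambda_\pm$ is exactly the interval $\widehat{\mathcal{I}}_\pm$. Together with \Cref{Theo:p}, this yields the assertion.

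I do not anticipate a genuine obstacle here; the analysis is routine calculus once the sign information is in place. The only points demanding care are invoking the positive (semi-)definiteness of the $A_i$ and the full rank of $B_1$ to guarantee $D>0$ (hence real, simple roots and a well-defined $\sqrt D$), and the bookkeeping of matching each monotonicity direction to the correct endpoint $\alpha$ or $\beta$.
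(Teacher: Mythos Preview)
Your argument is correct. Both your proof and the paper's hinge on the same idea---show that each root $\lambda_\pm$ is monotone in every parameter and then read off the extrema at the appropriate corners of the box---but the execution differs. You differentiate the explicit quadratic formula for $\lambda_\pm$ and use $\sqrt{D}\ge \tfrac12(\gamma_E^{(0)}+\gamma_E^{(1)})$ to pin down the signs; the paper instead computes the partial derivatives of $p$ itself, evaluates them at a root (rewriting $\xi+\gamma_E^{(1)}$ and $\xi-\gamma_E^{(0)}$ via $p(\xi)=0$), tabulates the signs, and then invokes Lemma~\ref{lem:how_roots_move} (the implicit function theorem) to translate these into monotonicity of $\lambda_\pm$. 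Your route is a bit more elementary here because an explicit root formula is available; the paper's route is consistent with the machinery it develops for the higher-degree polynomials $U_k$, where no closed-form roots exist and the implicit-function approach is essential. A small remark: the lemma as stated concerns only the roots of $p$, so your appeal to Theorem~\ref{Theo:p} is not needed for the proof itself (it is used downstream in the corollary).
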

\begin{proof}
	The partial derivatives of the $p(\lambda, \gamma_*)$ have the following expressions:
  \begin{align*}
	  \frac{\partial p}{\partial \lambda}(\lambda) &= 2 \lambda - (\gamma_E^{(0)} -\gamma_E^{(1)}), &
	  \frac{\partial p}{\partial \gamma_E^{(0)}}(\lambda) &= -\lambda - \gamma_E^{(1)}, &
	  \frac{\partial p}{\partial \gamma_E^{(1)}}(\lambda) &= \lambda - \gamma_E^{(0)}, &
	  \frac{\partial p}{\partial \gamma_R^{(1)}}(\lambda) &= -1,
  \end{align*}
	We observe that if $\xi$ is a root of $p(\lambda)$, then
	\[ \xi+\gamma_E^{(1)} = \frac{\gamma_R^{(1)}}{\xi - \gamma_E^{(0)}}.\]
	It turns out that, at a root,
	\begin{equation*}
		\frac{\partial p}{\partial \gamma_E^{(0)}}(\xi) = -\frac{\gamma_R^{(1)}}{\xi - \gamma_E^{(0)}},  \qquad \quad
		\frac{\partial p}{\partial \gamma_E^{(1)}}(\xi) = \frac{\gamma_R^{(1)}}{\xi + \gamma_E^{(1)}}.
	\end{equation*}
	From the previous results, the signs of the partial derivative are summarized in the table below:
	\begin{center}
            \renewcommand{\arraystretch}{1.2}
		\begin{tabular}{|c||c|ccc|} \hline
			& $-\frac{\partial p}{\partial \lambda}(\xi)$ & $\frac{\partial p}{\partial \gamma_E^{(0)}}(\xi) $
			& $\frac{\partial p}{\partial \gamma_E^{(1)}}(\xi) $ & $\frac{\partial p}{\partial \gamma_R^{(1)}}(\xi) $ \\ \hline \hline
		$\xi = \lambda_-$ &  $+$ & $+$ & $-$ & $-$ \\
		$\xi = \lambda_+$ &  $-$ & $-$ & $+$ & $-$ \\
        \hline
	\end{tabular}
	\end{center}

         Using Lemma \ref{lem:how_roots_move} of this paper or \cite[Lem. 2.2]{BMPP_COAP24}, we obtain the thesis.
\end{proof}

\begin{Corollary}
	Any eigenvalue of \eqref{Eq31.0} satisfies
	\[ \lambda \in \widehat{\mathcal{I}}_- \cup
		\left[ \alpha_E^{(0)},  \lambda_+(\beta_E^{(0)}, \alpha_E^{(1)}, \beta_R^{(1)}) \right].
		\]
	\end{Corollary}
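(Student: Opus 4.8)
\emph{Proof proposal.} The plan is to invoke \Cref{Theo:p} and \Cref{Lem:optp} to place every eigenvalue in the three candidate intervals $\widehat{\mathcal{I}}_{E_0}$, $\widehat{\mathcal{I}}_-$, $\widehat{\mathcal{I}}_+$, and then to absorb $\widehat{\mathcal{I}}_{E_0}$ and $\widehat{\mathcal{I}}_+$ into the single interval $[\alpha_E^{(0)}, \lambda_+(\beta_E^{(0)}, \alpha_E^{(1)}, \beta_R^{(1)})]$ by a pair of endpoint comparisons. By \Cref{Theo:p}, any eigenvalue of \eqref{Eq31.0} lies in $\widehat{\mathcal{I}}_{E_0} = [\alpha_E^{(0)}, \beta_E^{(0)}]$ or is a root of $p(\lambda)$, and by \Cref{Lem:optp} the latter lie in $\widehat{\mathcal{I}}_- \cup \widehat{\mathcal{I}}_+$. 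Since $\widehat{\mathcal{I}}_-$ already appears in the claimed set, it remains only to show $\widehat{\mathcal{I}}_{E_0} \cup \widehat{\mathcal{I}}_+ \subseteq [\alpha_E^{(0)}, \lambda_+(\beta_E^{(0)}, \alpha_E^{(1)}, \beta_R^{(1)})]$.

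First I would record the sign facts that drive the merging: since $A_0$ and $\widehat{S}_0$ are symmetric positive definite, $E_0 = \widehat{S}_0^{-1/2} A_0 \widehat{S}_0^{-1/2}$ is symmetric positive definite, so $\alpha_E^{(0)} > 0$; since $A_1 \succeq 0$ we have $E_1 \succeq 0$, hence $\alpha_E^{(1)} \ge 0$; and since $B_1$ has full rank, $R_1 R_1^\top$ is symmetric positive definite, so $\alpha_R^{(1)}, \beta_R^{(1)} > 0$. Using $\lambda_+(\gamma_E^{(0)}, \gamma_E^{(1)}, \gamma_R^{(1)}) = \tfrac12(\gamma_E^{(0)} - \gamma_E^{(1)}) + \sqrt{\tfrac14 (\gamma_E^{(0)} + \gamma_E^{(1)})^2 + \gamma_R^{(1)}}$, the right endpoint comparison $\beta_E^{(0)} \le \lambda_+(\beta_E^{(0)}, \alpha_E^{(1)}, \beta_R^{(1)})$ is, after subtracting $\tfrac12(\beta_E^{(0)} - \alpha_E^{(1)})$ from both sides, equivalent to $\tfrac12(\beta_E^{(0)} + \alpha_E^{(1)}) \le \sqrt{\tfrac14 (\beta_E^{(0)} + \alpha_E^{(1)})^2 + \beta_R^{(1)}}$, which holds because both sides are nonnegative and $\beta_R^{(1)} \ge 0$; this gives $\widehat{\mathcal{I}}_{E_0} \subseteq [\alpha_E^{(0)}, \lambda_+(\beta_E^{(0)}, \alpha_E^{(1)}, \beta_R^{(1)})]$. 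For $\widehat{\mathcal{I}}_+$, whose right endpoint is already $\lambda_+(\beta_E^{(0)}, \alpha_E^{(1)}, \beta_R^{(1)})$, it suffices to check its left endpoint satisfies $\lambda_+(\alpha_E^{(0)}, \beta_E^{(1)}, \alpha_R^{(1)}) \ge \alpha_E^{(0)}$, and by the same manipulation this reduces to $\tfrac12(\alpha_E^{(0)} + \beta_E^{(1)}) \le \sqrt{\tfrac14 (\alpha_E^{(0)} + \beta_E^{(1)})^2 + \alpha_R^{(1)}}$, true since $\alpha_R^{(1)} \ge 0$. Combining, every eigenvalue lies in $\widehat{\mathcal{I}}_- \cup [\alpha_E^{(0)}, \lambda_+(\beta_E^{(0)}, \alpha_E^{(1)}, \beta_R^{(1)})]$, as asserted.

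The argument is essentially bookkeeping on top of \Cref{Theo:p} and \Cref{Lem:optp}; the only delicate point is that the two endpoint inequalities hinge on $\alpha_E^{(0)} > 0$ and $\alpha_E^{(1)}, \alpha_R^{(1)}, \beta_R^{(1)} \ge 0$, which themselves rest on the standing assumptions ($A_0$ symmetric positive definite, $A_1 \succeq 0$, $B_1$ of full rank, $\widehat{S}_i$ symmetric positive definite), so I would make sure these are invoked explicitly. No extra monotonicity of $\lambda_+$ beyond what is already packaged in \Cref{Lem:optp} is needed, and $\widehat{\mathcal{I}}_-$ is carried through unchanged.
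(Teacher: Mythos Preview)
Your proposal is correct and follows essentially the same approach as the paper: invoke \Cref{Theo:p} and \Cref{Lem:optp}, then merge $\widehat{\mathcal{I}}_{E_0}$ and $\widehat{\mathcal{I}}_+$ into a single interval by comparing endpoints. The paper obtains the key inequality $\gamma_E^{(0)} < \lambda_+$ more succinctly by observing $p(\gamma_E^{(0)}) = -\gamma_R^{(1)} < 0$, whereas you verify it by direct manipulation of the closed-form expression for $\lambda_+$; these are equivalent.
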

	\begin{proof}
		The statement follows from \Cref{Theo:p} and \Cref{Lem:optp},  and by
		observing that $p(\gamma_E^{(0)}) = -\gamma_R^{(1)} < 0$, which implies that $\widehat{\mathcal I}_{E_0} \cup \widehat{\mathcal I}_+$ = $ \left[ \alpha_E^{(0)},  \lambda_+(\beta_E^{(0)}, \alpha_E^{(1)}, \beta_R^{(1)}) \right]$.
	\end{proof}

%\begin{figure}[h!]
	%\includegraphics[width=7.6cm]{fig1}
	%\hspace{-8mm}
	%\includegraphics[width=7.6cm]{fig2}
	%\caption{Bounds vs true eigenvalues.
	%$\gamma_{\min}^A = 0.1,
	%\gamma_{\max}^A =  1.88,
	%\gamma_{\min}^R =  0.3,
	%\gamma_{\max}^R =  1.7$.}
%\end{figure}
We are now ready to characterize the eigenvalues of the preconditioned matrix
	$\widehat{\mathcal{P}}_D^{-1} \mathcal{A}$.
%	{To this end we require a further hypothesis on the
	%eigenvalues of the preconditioned $(1,1)$ block, in addition to \eqref{spect_int}, specifically that
	%\[ \gamma_{\max} ^A < 2.\]
	%}
{\begin{Theorem}
	\label{Theo:pi}
	The eigenvalues of $\widehat{\mathcal{P}}_D^{-1} \mathcal{A}$ either belong to
	$\widehat{\mathcal I}_- \cup \widehat{\mathcal I}_{E_0} \cup \widehat{\mathcal I}_+$
%	{\color{red} [DEFINE; should include $[\gamma_{\min}^A, \gamma_{\max}^A]$]},
	or they are solutions to the cubic polynomial equation
%	{\color{red} [$\boldsymbol{\gamma}_E$, $\boldsymbol{\gamma}_R$ undefined; should $p$ have fewer arguments than $\pi$?]}
		\[\pi(\lambda; \boldsymbol{\gamma}_E, \boldsymbol{\gamma}_R) \equiv (\gamma_E^{(0)} -\lambda) \gamma_R^{(2)} + p(\lambda; \boldsymbol{\gamma}_E, \boldsymbol{\gamma}_R) (\lambda -\gamma_E^{(2)})  = 0.  \]
%	\[ \lambda^3 - a_0 \lambda^2 - a_1 \lambda + a_2  = 0,\]
	%where
	%\begin{eqnarray}
		%\label{coefficients}
		%a_0 & = & 2 \gamma_R^{(2)} + \gamma_E^{(0)} + \gamma_R^{(1)}(1+\gamma_R^{(2)}) (\gamma_E^{(0)}-2)   \nonumber \\
		%a_1 & = &  \gamma_R^{(1)}(1+\gamma_R^{(2)}) + \gamma_R^{(2)}(1 - 2 \gamma_E^{(0)})  \\
		%a_2 & = &   \gamma_E^{(0)} \gamma_R^{(2)}. \nonumber
	%\end{eqnarray}
\end{Theorem}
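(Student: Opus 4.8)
\emph{Proof plan.} The strategy is to reproduce the two–stage Schur‑complement elimination used in the proofs of \Cref{theo2} and \Cref{Theo:p}: first remove $x$ using the first block row, then remove $y$ using the second block row, and finally extract a scalar identity from the third block row via a Rayleigh‑quotient argument based on \Cref{Le1}. Throughout, the parameters $\boldsymbol{\gamma}_E,\boldsymbol{\gamma}_R$ attached to a prospective eigenvalue are precisely the ones produced by the relevant Rayleigh quotients and invocations of \Cref{Le1}, and the goal is to show that whenever $\lambda \notin \widehat{\mathcal I}_- \cup \widehat{\mathcal I}_{E_0} \cup \widehat{\mathcal I}_+$ the cubic $\pi(\lambda;\boldsymbol{\gamma}_E,\boldsymbol{\gamma}_R)$ vanishes for that choice of parameters.

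I would begin with the degenerate cases. If $\lambda \in \widehat{\mathcal I}_{E_0}$ there is nothing to prove, so suppose $\lambda \notin \widehat{\mathcal I}_{E_0}$; then $\lambda I - E_0$ is definite and the first block row gives $x = (\lambda I - E_0)^{-1} R_1^\top y$ as in \eqref{Eq65}. If $y=0$ this forces $x=0$, whence the second block row \eqref{Eq33.1} gives $R_2^\top z = 0$, hence $z = 0$ since $R_2^\top$ is injective (recall $B_2$ has full rank and $n_2 \le n_1$), contradicting that the eigenvector is nonzero; so $y \neq 0$. Substituting $x$ into \eqref{Eq33.1} yields $Y(\lambda) y + R_2^\top z = 0$ with $Y(\lambda) = R_1(\lambda I - E_0)^{-1} R_1^\top - \lambda I - E_1$, the matrix from the proof of \Cref{Theo:p}, and \eqref{yZy} shows that every Rayleigh quotient of $Y(\lambda)$ equals $p(\lambda;\boldsymbol{\gamma}_E,\boldsymbol{\gamma}_R)/(\gamma_E^{(0)}-\lambda)$ for admissible $\boldsymbol{\gamma}_E,\boldsymbol{\gamma}_R$. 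If $z=0$, then $Y(\lambda) y = 0$ with $y\neq 0$ gives $p(\lambda;\boldsymbol{\gamma}_E,\boldsymbol{\gamma}_R) = 0$, so $\lambda$ is a root of $p$ and \Cref{Lem:optp} places it in $\widehat{\mathcal I}_- \cup \widehat{\mathcal I}_+$. Hence I may assume from now on that $\lambda \notin \widehat{\mathcal I}_- \cup \widehat{\mathcal I}_{E_0} \cup \widehat{\mathcal I}_+$ and that $z \neq 0$.

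The next step is to record that, under this assumption, $Y(\lambda)$ is definite, hence invertible: the denominator $\gamma_E^{(0)}-\lambda$ has constant sign because $\lambda \notin \widehat{\mathcal I}_{E_0}$, while the numerator $p(\lambda;\boldsymbol{\gamma}_E,\boldsymbol{\gamma}_R)$ cannot vanish for any admissible $\boldsymbol{\gamma}_E,\boldsymbol{\gamma}_R$ (otherwise $\lambda$ would be a root of $p$ and lie in $\widehat{\mathcal I}_- \cup \widehat{\mathcal I}_+$ by \Cref{Lem:optp}) and therefore, by continuity over the connected box of admissible parameters, also has constant sign; thus $0$ lies outside the spectral interval of $Y(\lambda)$. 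I then eliminate $y = -Y(\lambda)^{-1} R_2^\top z$ from \eqref{Eq33.1} and substitute into the third block row \eqref{Eq34.1}, obtaining $(-R_2 Y(\lambda)^{-1} R_2^\top + E_2 - \lambda I) z = 0$, so that $z \neq 0$ gives $-\,z^\top R_2 Y(\lambda)^{-1} R_2^\top z/(z^\top z) + \gamma_E^{(2)} - \lambda = 0$ with $\gamma_E^{(2)} = z^\top E_2 z/(z^\top z)$.

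Finally I would apply \Cref{Le1} to the definite matrix $Y(\lambda)$ with $s = R_2^\top z$, producing $w \neq 0$ with $s^\top Y(\lambda)^{-1} s/(s^\top s) = 1/\gamma_Y$ and $\gamma_Y = w^\top Y(\lambda) w/(w^\top w)$; by \eqref{yZy} applied to $w$ we have $\gamma_Y = p(\lambda;\boldsymbol{\gamma}_E,\boldsymbol{\gamma}_R)/(\gamma_E^{(0)}-\lambda)$ for admissible parameters, with the $R_1$-level parameters coming from $w$ and $\gamma_R^{(2)} = z^\top R_2 R_2^\top z/(z^\top z)$ from $z$. Then $z^\top R_2 Y(\lambda)^{-1} R_2^\top z/(z^\top z) = \gamma_R^{(2)}(\gamma_E^{(0)}-\lambda)/p(\lambda)$, and the scalar identity becomes $-\gamma_R^{(2)}(\gamma_E^{(0)}-\lambda)/p(\lambda) + \gamma_E^{(2)} - \lambda = 0$; multiplying through by $p(\lambda) \neq 0$ gives exactly $\gamma_R^{(2)}(\gamma_E^{(0)}-\lambda) + p(\lambda)(\lambda - \gamma_E^{(2)}) = \pi(\lambda;\boldsymbol{\gamma}_E,\boldsymbol{\gamma}_R) = 0$. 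I expect the only delicate point to be the bookkeeping of which parameter ranges govern the definiteness of $Y(\lambda)$ — that is, making precise that the admissible parameter box is connected so the sign of $p(\lambda;\cdot)$ is genuinely constant off $\widehat{\mathcal I}_- \cup \widehat{\mathcal I}_+$ — together with checking that the $\boldsymbol{\gamma}$'s coming from the separate uses of \Cref{Le1} and the direct Rayleigh quotients assemble into a single admissible $(\boldsymbol{\gamma}_E,\boldsymbol{\gamma}_R)$ at which $\pi$ vanishes; everything else is the routine elimination already performed in \Cref{Theo:p}.
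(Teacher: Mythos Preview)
Your proof is correct and follows essentially the same approach as the paper: eliminate $x$, then $y$, via Schur-complement reductions, and apply \Cref{Le1} together with \eqref{yZy} to reduce the third block row to the scalar cubic $\pi(\lambda)=0$. Your treatment is in fact slightly more thorough than the paper's --- you handle the degenerate cases $y=0$ and $z=0$ explicitly and justify the definiteness of $Y(\lambda)$ via the constant-sign/connectedness argument, whereas the paper simply invokes \Cref{Theo:p} for that step.
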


\begin{proof}
	Assuming $\lambda \not \in \widehat{\mathcal{I}}_{E_0}$ and
	inserting (\ref{Eq65}) into \eqref{Eq33.1} yields
	 $Y(\lambda) y =  -R_2^\top z$.
	Using \Cref{Theo:p}, we have that
	 if $\lambda \not \in \widehat{\mathcal I}_- \cup \widehat{\mathcal I}_+$ then $Y(\lambda) $ is either positive or negative definite,
	 and hence invertible.
	 We can then write
	 \[ y = -Y(\lambda)^{-1} R_2^\top z,
	% {\color{red} [y = -Y(\lambda)^{-1} R_2^\top z?]},
	\]
	 whereupon substitution into (\ref{Eq34.1}) yields
	\begin{equation} \label{noE}
	\left(R_2 Y(\lambda)^{-1} R_2^\top + \lambda I  - E_2 \right) z =0.
%	{\color{red} [\left(R_2 Y(\lambda)^{-1} R_2^\top + \lambda I  - E_2 \right) z =0?]}.
	\end{equation}
	Let us now pre-multiply~\eqref{noE} by  $\frac{z^\top}{z^\top z} $ to establish
	\[\frac{z^\top R_2 Y(\lambda)^{-1} R_2^\top z}{z^\top z} +  \lambda -
	\frac{z^\top E_2 z}{z^\top z} =0.
%	{\color{red} [\frac{z^\top \left(R_2 Y(\lambda)^{-1} R_2^\top \right) z}{z^\top z} +  \lambda - \frac{z^\top E_2 z}{z^\top z} =0?]}.
	\]
	Setting $s = R_2^\top z$, and multiplying the numerator and denominator of the first term by $s^\top s$, we obtain
	\begin{equation*}
		%\label{penultimate}
		\frac{s^\top Y(\lambda)^{-1} s}{s^\top s} \frac{z^\top R_2 R_2^\top z}{z^\top z} +\lambda - \gamma_E^{(2)} = 0.
%%		{\color{red} [\frac{s^\top Y(\lambda)^{-1} s}{s^\top s} \frac{z^\top R_2 R_2^\top z}{z^\top z} +\lambda - \gamma_E^{(2)} = 0?]}.
	\end{equation*}
	Using (\ref{yZy}) and applying Lemma \ref{Le1} to $Y(\lambda)$, we have that
	\[ \frac {\gamma_E^{(0)} - \lambda}{p(\lambda)} \gamma_R^{(2)} + \lambda -\gamma_E^{(2)} = 0,\]
	the zeros of which outside $ \widehat{\mathcal I}_- \cup  \widehat{\mathcal I}_+$ characterize the eigenvalues of the preconditioned matrix,
	or equivalently the zeros of
	\[\pi(\lambda) = p(\lambda) (\lambda - \gamma_E^{(2)}) + \gamma_R^{(2)} (\gamma_E^{(0)} - \lambda), \]
    where we apply the shorthand $\pi(\lambda)$ for $\pi(\lambda; \boldsymbol{\gamma}_E, \boldsymbol{\gamma}_R)$.
	% \begin{equation} \label{eqpi1}
	% 	\pi(\lambda; \gamma_E^{(0)}, \gamma_R, \gamma_R^{(2)}) = (1+ \lambda)^2 (\gamma_E^{(0)} - \lambda) \gamma_R^{(2)} + p(\lambda; \gamma_E^{(0)}, \gamma_R) \lambda (1+\gamma_R^{(2)})  = 0. \qedhere
	% \end{equation}
%which is equivalent to writing
       %\[ \lambda^3 - a_0 \lambda^2 - a_1 \lambda + a_2  = 0,\]
	%with $a_0, a_1$ and $a_2$ as in \eqref{coefficients}.
\end{proof}
{The polynomial $\pi(\lambda)$ has exactly three real roots $ \mu_a$, $\mu_b$, and $\mu_c$, satisfying
\[ \mu_a < \lambda_- < 0 < \mu_b < \lambda_+ < \mu_c,\]
due to
\begin{equation*}
        %\label{piconditions}
  \begin{aligned}
        \lim _{\lambda \to -\infty} \pi(\lambda) & = -\infty, &
        \pi(\lambda_-) & = \gamma_R^{(2)} (\gamma_E^{(0)} - \lambda_-)  > 0,  \\
          \lim _{\lambda \to +\infty} \pi(\lambda) & = +\infty, &
          \pi(0) & = (\gamma_R^{(1)} +\gamma_E^{(0)} \gamma_E^{(1)}) \gamma_E^{(2)} + \gamma_R^{(2)} \gamma_E^{(0)} > 0,   &
          \pi(\lambda_+) & = \gamma_R^{(2)} (\gamma_E^{(0)} - \lambda_+) < 0,
  \end{aligned}
\end{equation*}
where the last inequality is a consequence of the already observed inequality $p(\gamma_E^{(0)}) = -\gamma_R^{(1)} < 0$.
 It further holds that $\pi'(\mu_a) > 0$, $\pi'(\mu_b) < 0$, and $\pi'(\mu_c) > 0$.}

%The sign of $\pi'(\xi), \xi \in \{\mu_a, \mu_b, \mu_c\}$  is derived by observing Figure \ref{fig1}.
%\begin{figure}[h!]
%	\centerline{\includegraphics[width=9cm]{polynomials}}
%	\caption{Graphs of $\pi$ and $p$.}
%	\label{fig1}
%\end{figure}

To bound the roots of $\pi$ we need to determine the sign of the partial derivatives of the Lagrangian with respect to its variables.
{
\begin{Lemma}
	\label{Lemma:pi}
	A root $\xi$ of $\pi$ satisfies
	\begin{equation*}
		%\label{eq:piBounds}
	 \xi \in \left[\mu_-^{\tLB}, \mu_-^{\tUB}\right] \cup
	\left[\mu_+^{\tLB}, \mu_+^{\tUB}\right], \end{equation*}
	 where
	 \begin{align*}
		 \mu_-^{\tLB}& = \mu_a(\alpha_E^{(0)}, \beta_E^{(1)}, \beta_R^{(1)}, \alpha_E^{(2)}, \beta_R^{(2)}), \\
		 \mu_-^{\tUB}& = \mu_a(\beta_E^{(0)}, \alpha_E^{(1)}, \alpha_R^{(1)}, \beta_E^{(2)}, \alpha_R^{(2)}), \\
		 \mu_+^{\tLB}& = \min\left\{\alpha_E^{(0)}, \mu_b(\alpha_E^{(0)}, \beta_E^{(1)}, \beta_R^{(1)}, \alpha_E^{(2)}, \alpha_R^{(2)})\right\}, \\
		 \mu_+^{\tUB}& = \mu_c(\beta_E^{(0)}, \alpha_E^{(1)}, \beta_R^{(1)}, \beta_E^{(2)}, \beta_R^{(2)}).
	 \end{align*}
\end{Lemma}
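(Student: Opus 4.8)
The plan is to mimic the proof of \Cref{Lem:optp}, but now for the cubic $\pi$ rather than the quadratic $p$. The key input is the sign pattern of the partial derivatives of $\pi$ with respect to each of its five parameters $\gamma_E^{(0)}, \gamma_E^{(1)}, \gamma_R^{(1)}, \gamma_E^{(2)}, \gamma_R^{(2)}$, evaluated at a root $\xi$, together with the sign of $\partial \pi / \partial \lambda$ at $\xi$, which is already known from the discussion preceding the lemma ($\pi'(\mu_a) > 0$, $\pi'(\mu_b) < 0$, $\pi'(\mu_c) > 0$). Once these signs are in hand, \Cref{lem:how_roots_move} (or equivalently \cite[Lem. 2.2]{BMPP_COAP24}) tells us that each root is monotone in each parameter, so its extremal values over the parameter box $\prod \widehat{\I}_{E_i} \times \prod \widehat{\I}_{R_i}$ are attained at the corners, with the specific corner for each endpoint dictated by the sign table.

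First I would compute the five partial derivatives of $\pi(\lambda) = p(\lambda)(\lambda - \gamma_E^{(2)}) + \gamma_R^{(2)}(\gamma_E^{(0)} - \lambda)$, reusing the already-computed derivatives of $p$. Explicitly, $\partial \pi/\partial \gamma_R^{(2)} = \gamma_E^{(0)} - \lambda$; $\partial \pi/\partial \gamma_E^{(2)} = -p(\lambda)$; $\partial \pi/\partial \gamma_R^{(1)} = -(\lambda - \gamma_E^{(2)})$; $\partial \pi/\partial \gamma_E^{(0)} = (-\lambda - \gamma_E^{(1)})(\lambda - \gamma_E^{(2)}) + \gamma_R^{(2)}$; and $\partial \pi/\partial \gamma_E^{(1)} = (\lambda - \gamma_E^{(0)})(\lambda - \gamma_E^{(2)})$. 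Then, at a root $\xi$, I would use the root relation $p(\xi)(\xi - \gamma_E^{(2)}) = -\gamma_R^{(2)}(\gamma_E^{(0)} - \xi)$, the relation $\xi + \gamma_E^{(1)} = \gamma_R^{(1)}/(\xi - \gamma_E^{(0)})$ from the proof of \Cref{Lem:optp}, and the established ordering $\mu_a < \lambda_- < 0 < \mu_b < \lambda_+ < \mu_c$ together with $\gamma_E^{(0)} \ge 0 = \alpha_E^{(0)}$ wait, actually $\gamma_E^{(0)} = \gamma_E^{(0)}$ can be taken $\ge \alpha_E^{(0)} \ge 0$ and one uses $\gamma_E^{(0)} \in [\alpha_E^{(0)}, \beta_E^{(0)}]$ with $p(\gamma_E^{(0)}) = -\gamma_R^{(1)} < 0$ so $\gamma_E^{(0)}$ lies strictly between $\lambda_-$ and $\lambda_+$, hence $\gamma_E^{(0)} - \mu_a > 0$, $\gamma_E^{(0)} - \mu_c < 0$, and the sign of $\gamma_E^{(0)} - \mu_b$ is indeterminate (which is exactly why $\mu_+^{\tLB}$ takes a min with $\alpha_E^{(0)}$). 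I would then assemble a sign table analogous to the one in \Cref{Lem:optp}, with rows $\xi = \mu_a, \mu_b, \mu_c$ and columns the negatives/appropriate signs of the five parameter-derivatives divided by $\partial\pi/\partial\lambda$.

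Having the sign table, the conclusion is mechanical: for $\mu_a$ (a decreasing function of a parameter contributes its upper bound to $\mu_-^{\tLB}$ and lower bound to $\mu_-^{\tUB}$, and vice versa for an increasing one), reading off gives $\mu_-^{\tLB} = \mu_a(\alpha_E^{(0)}, \beta_E^{(1)}, \beta_R^{(1)}, \alpha_E^{(2)}, \beta_R^{(2)})$ and $\mu_-^{\tUB} = \mu_a(\beta_E^{(0)}, \alpha_E^{(1)}, \alpha_R^{(1)}, \beta_E^{(2)}, \alpha_R^{(2)})$; similarly $\mu_c$ yields $\mu_+^{\tUB} = \mu_c(\beta_E^{(0)}, \alpha_E^{(1)}, \beta_R^{(1)}, \beta_E^{(2)}, \beta_R^{(2)})$. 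For the inner endpoint $\mu_+^{\tLB}$ one combines the monotonicity of $\mu_b$ in those parameters whose derivative sign is determined with the safe fallback $\alpha_E^{(0)} \le \mu_b$ that comes from $\pi(\alpha_E^{(0)})$; more precisely, since $\widehat{\mathcal I}_{E_0} \cup [\mu_b\text{-interval}]$ collapses as in the Corollary, one gets $\mu_+^{\tLB} = \min\{\alpha_E^{(0)}, \mu_b(\alpha_E^{(0)}, \beta_E^{(1)}, \beta_R^{(1)}, \alpha_E^{(2)}, \alpha_R^{(2)})\}$. The main obstacle I anticipate is pinning down the sign of $\partial \pi/\partial \gamma_E^{(0)}$ and of $\gamma_E^{(0)} - \mu_b$ at the middle root $\mu_b$: these are genuinely indeterminate in general, which forces the $\min$ with $\alpha_E^{(0)}$ and requires the same absorption trick used in the Corollary after \Cref{Lem:optp}; getting the bookkeeping of which parameter sends which endpoint where exactly right (rather than off by a swap of $\alpha$ and $\beta$) is the delicate part, and I would double-check it against the limiting case $A_2 = 0$, $\widehat S_i = S_i$, where $\pi$ must reduce to the exact polynomial $U_3$ and the bounds to those of \Cref{Theo1}.
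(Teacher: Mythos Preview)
Your approach is essentially the paper's: compute the five partial derivatives of $\pi$, simplify them at a root $\xi$ via the relation $\pi(\xi)=0$, assemble a sign table for $\xi=\mu_a,\mu_b,\mu_c$, and invoke \Cref{lem:how_roots_move}. You also correctly isolate the reason for the $\min$ with $\alpha_E^{(0)}$, namely that the sign of $\gamma_E^{(0)}-\mu_b$ (hence of $\partial\pi/\partial\gamma_R^{(2)}$ at $\mu_b$) is indeterminate; the paper handles this by a case split ``if $\gamma_E^{(0)}\le\mu_b$ set $\mu_+^{\tLB}=\alpha_E^{(0)}$, otherwise use the sign table'' and then takes the minimum.

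One genuine slip to fix: you list ``the relation $\xi+\gamma_E^{(1)}=\gamma_R^{(1)}/(\xi-\gamma_E^{(0)})$ from the proof of \Cref{Lem:optp}'' among the tools. That identity holds only at roots of $p$, not of $\pi$, and a root of $\pi$ is never a root of $p$ (since $\pi(\lambda_\pm)=\gamma_R^{(2)}(\gamma_E^{(0)}-\lambda_\pm)\neq 0$). The paper instead uses only $\pi(\xi)=0$, i.e.\ $\xi-\gamma_E^{(2)}=\gamma_R^{(2)}(\xi-\gamma_E^{(0)})/p(\xi)$, to rewrite the awkward derivatives as
\[
\frac{\partial\pi}{\partial\gamma_E^{(0)}}(\xi)=-\frac{\gamma_R^{(1)}\gamma_R^{(2)}}{p(\xi)},\qquad
\frac{\partial\pi}{\partial\gamma_R^{(1)}}(\xi)=\gamma_R^{(2)}\frac{\gamma_E^{(0)}-\xi}{p(\xi)},\qquad
\frac{\partial\pi}{\partial\gamma_E^{(1)}}(\xi)=\gamma_R^{(2)}\frac{(\xi-\gamma_E^{(0)})^2}{p(\xi)},
\]
so that all signs reduce to the sign of $p(\xi)$, which the interlacing $\mu_a<\lambda_-<\mu_b<\lambda_+<\mu_c$ determines. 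Replace your borrowed relation with this substitution and the rest of your plan goes through verbatim.
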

}
\begin{proof}
As before, $\xi$ denotes a generic root of $\pi$.
	\begin{align*}
		\frac{\partial \pi}{\partial \gamma_E^{(0)}}(\xi) &= \gamma_R^{(2)} -(\xi - \gamma_E^{(2)})(\xi + \gamma_E^{(1)}), &
		\frac{\partial \pi}{\partial \gamma_R^{(1)}}(\xi) &= \gamma_E^{(2)} -\xi, \\%[-.2em]
		\frac{\partial \pi}{\partial \gamma_E^{(1)}}(\xi) &= (\xi - \gamma_E^{(0)})(\xi - \gamma_E^{(2)}), &
		\frac{\partial \pi}{\partial \gamma_R^{(2)}}(\xi) &= \gamma_E^{(0)} -\xi, \\%[-.2em]
		\frac{\partial \pi}{\partial \gamma_E^{(2)}}(\xi) &= -p(\xi).
	\end{align*}
	At first sight, the signs of the derivatives depend on the value of some indicators and the position of $\xi$.
	However, noticing that $\pi(\xi) = 0$ implies
	\[ \xi - \gamma_E^{(2)} = \frac{\gamma_R^{(2)}(\xi - \gamma_E^{(0)})}{p(\xi)}, \]
	we have, after some algebra, also that
	\begin{align*}
		\frac{\partial \pi}{\partial \gamma_E^{(0)}}(\xi) &= -\frac{\gamma_R^{(1)} \gamma_R^{(2)} }{p(\xi)}, \\%[-.4em]
		\frac{\partial \pi}{\partial \gamma_R^{(1)}}(\xi) &= \gamma_R^{(2)} \frac{\gamma_E^{(0)} - \xi}{p(\xi)}, \\%[-.4em]
		\frac{\partial \pi}{\partial \gamma_E^{(1)}}(\xi) &= \gamma_R^{(2)}\frac{(\xi - \gamma_E^{(0)})^2}{p(\xi)}.
	\end{align*}
	 If $\gamma_E^{(0)} \le \mu_b$ then we set  $\mu_+^{\text{LB}} \equiv \alpha_E^{(0)}$. % Had to change \tLB to \text{LB} for formatting
    Otherwise, we use
the fact that $\mu_a < \lambda_- < \mu_b < \lambda_+ < \mu_c$ to  devise the sign of $p(\xi)$ and that
	\[ p(\mu_a) >0, \ p(\mu_b) < 0, \ p(\mu_c) > 0, \qquad 0 < {\mu_b < \gamma_E^{(0)}} < \mu_c.\]
	Taking all the previous into account,
	the following sketch gives the desired monotonicity analysis:
	\begin{center}
            \renewcommand{\arraystretch}{1.2}
		\begin{tabular}{|c||c|ccccc|} \hline %%|ccccc}
			& $-\frac{\partial \pi}{\partial \lambda}(\xi)$ &  $\frac{\partial \pi}{\partial \gamma_E^{(0)} }(\xi)$
			& $\frac{\partial \pi}{\partial \gamma_E^{(1)} }(\xi)$ & $\frac{\partial \pi}{\partial \gamma_R^{(1)} }(\xi)$
			& $\frac{\partial \pi}{\partial \gamma_E^{(2)} }(\xi)$ & $\frac{\partial \pi}{\partial \gamma_R^{(2)} }(\xi)$  \\ \hline \hline
			%&$\frac{\partial L}{\gamma_E^{(0)} }(\xi)$
			%& $\frac{\partial L}{\gamma_E^{(1)} }(\xi)$ & $\frac{\partial L}{\gamma_R }(\xi)$
			%& $\frac{\partial L}{\gamma_E^{(2)} }(\xi)$ & $\frac{\partial L}{\gamma_R^{(2)} }(\xi)$ \\
		$\xi = \mu_a$ & $-$ & $-$ & $+$ & $+$ & $-$ & $+$  \\
		%& $+$ & $-$ & $-$ & $+$ & $-$ \\
		$\xi = \mu_b$ & $+$ & $+$ & $-$ & $-$  & $+$ & $+$  \\
		%& $+$ & $-$ & $-$  & $+$ & $+$ \\
		$\xi = \mu_c$ & $-$ & $-$ & $+$ & $-$  & $-$ & $-$  \\
		%& $+$ & $-$ & $+$  & $+$ & $+$ \\
        \hline
        \end{tabular}
	\end{center}
	 Using Lemma \ref{lem:how_roots_move} of this paper or \cite[Lem. 2.2]{BMPP_COAP24}, we obtain the thesis.
	 \end{proof}
	%and that
	%\begin{equation}
		%\label{eq:piBounds}
	 %\xi \in \left[\mu_-^{\tLB}, \mu_-^{\tUB}\right] \cup
	%\left[\mu_+^{\tLB}, \mu_+^{\tUB}\right], \end{equation}
	 %where, by \cite[Lem. 2.1]{BMPP_COAP24}
	 %\begin{eqnarray*}
		 %\mu_-^{\tLB}& =& \mu_a(\alpha_E^{(0)}, \beta_E^{(1)}, \beta_R^{(1)}, \alpha_E^{(2)}, \beta_R^{(2)}), \\
		 %\mu_-^{\tUB}& =& \mu_a(\beta_E^{(0)}, \alpha_E^{(1)}, \alpha_R^{(1)}, \beta_E^{(2)}, \alpha_R^{(2)}), \\
		 %\mu_+^{\tLB}& =& \min\left\{\alpha_E^{(0)}, \mu_b(\alpha_E^{(0)}, \beta_E^{(1)}, \beta_R^{(1)}, \alpha_E^{(2)}, \alpha_R^{(2)})\right\}, \\
		 %\mu_+^{\tUB}& =& \mu_c(\beta_E^{(0)}, \alpha_E^{(1)}, \beta_R^{(1)}, \beta_E^{(2)}, \beta_R^{(2)}).
	 %\end{eqnarray*}
%{\color{red} [In the end, would the upper bound on the negative eigenvalues be the upper bound of $\widehat{\mathcal{I}}_-$?]}
	 \begin{Corollary}
		 \label{corInexactDouble}
		    Any  eigenvalue $\lambda$ of $\widehat{\mathcal{P}}_D^{-1} \mathcal{A}$ satisfies
		   % {\color{red} [New interpretation of \cite[Thm. 2.2]{Bradley}, with particular choice of matrix blocks. There, the minimum of two terms involving $\alpha_E^{(0)}$ was not present in the lower bound for positive eigenvalues, because lower bound of $0$ was taken for $\gamma_E^{(2)}$, in which case $\pi(\gamma_A) < 0$. Lower bound of $0$ was taken for $\gamma_E^{(1)}$ for upper bound on negative eigenvalues]}
	\[ \lambda \in \left[\mu_-^{\tLB}, \lambda_-(\beta_E^{(0)}, \alpha_E^{(1)}, \alpha_R^{(1)})\right]  \cup
	 \left[\mu_+^{\tLB}, \mu_+^{\tUB}\right], \]
%       {\color{red} [$\boldsymbol{\gamma}_E$, $\boldsymbol{\gamma}_R$ undefined; should $p$ have fewer arguments than $\pi$?]}
	 \end{Corollary}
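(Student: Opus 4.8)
The plan is to combine \Cref{Theo:pi} with \Cref{Lemma:pi}, exactly as in the proof of the earlier \Cref{Corollary}, by observing that the interval $\widehat{\mathcal I}_{E_0}$ can be absorbed into the neighbouring positive interval, and that the leftmost admissible value from \Cref{Lemma:pi} coincides with (or is dominated by) the right endpoint of $\widehat{\mathcal I}_-$. Concretely, by \Cref{Theo:pi} every eigenvalue lies in $\widehat{\mathcal I}_- \cup \widehat{\mathcal I}_{E_0} \cup \widehat{\mathcal I}_+$ or is a root of $\pi$; by \Cref{Lemma:pi} the roots of $\pi$ lie in $[\mu_-^{\tLB},\mu_-^{\tUB}] \cup [\mu_+^{\tLB},\mu_+^{\tUB}]$. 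So it suffices to show that the union of all five of these intervals equals the two-interval set in the statement.

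First I would treat the positive part. Recall from the proof of the earlier \Cref{Corollary} that $p(\gamma_E^{(0)}) = -\gamma_R^{(1)} < 0$, which forced $\widehat{\mathcal I}_{E_0} \cup \widehat{\mathcal I}_+ = [\alpha_E^{(0)}, \lambda_+(\beta_E^{(0)},\alpha_E^{(1)},\beta_R^{(1)})]$; the same sign argument shows that $\gamma_E^{(0)}$ always lies strictly between the middle root $\mu_b$ and the largest root $\mu_c$ of $\pi$ (this is exactly the chain $\mu_b < \gamma_E^{(0)} < \mu_c$ already used in the proof of \Cref{Lemma:pi}). Hence $\alpha_E^{(0)} \le \mu_c$ for the extremal parameter choices, and in fact $\alpha_E^{(0)}$ is never smaller than $\mu_b$ evaluated at its LB-optimal arguments, so the definition $\mu_+^{\tLB} = \min\{\alpha_E^{(0)}, \mu_b(\ldots)\}$ is precisely what makes $\widehat{\mathcal I}_{E_0}$, $\widehat{\mathcal I}_+$, and the positive $\pi$-interval $[\mu_+^{\tLB},\mu_+^{\tUB}]$ merge into the single interval $[\mu_+^{\tLB},\mu_+^{\tUB}]$. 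I would spell out that $\widehat{\mathcal I}_+ = [\lambda_+(\alpha_E^{(0)},\beta_E^{(1)},\alpha_R^{(1)}), \lambda_+(\beta_E^{(0)},\alpha_E^{(1)},\beta_R^{(1)})]$ has its right endpoint bounded by $\mu_c$ at the same arguments (since $\mu_c > \lambda_+$ pointwise, by the root-ordering established after \Cref{Theo:pi}), giving $\widehat{\mathcal I}_+ \cup [\mu_+^{\tLB},\mu_+^{\tUB}] = [\mu_+^{\tLB},\mu_+^{\tUB}]$, and that the lower endpoints match up because $\mu_+^{\tLB} \le \alpha_E^{(0)} \le \lambda_+(\alpha_E^{(0)},\beta_E^{(1)},\alpha_R^{(1)})$, the last inequality coming from $p(\alpha_E^{(0)}) < 0$ again (so $\alpha_E^{(0)}$ sits between the two roots of that instance of $p$).

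For the negative part, the point is that $\widehat{\mathcal I}_- = [\lambda_-(\alpha_E^{(0)},\beta_E^{(1)},\beta_R^{(1)}), \lambda_-(\beta_E^{(0)},\alpha_E^{(1)},\alpha_R^{(1)})]$ already has exactly the right endpoint appearing in the corollary, and $\mu_-^{\tLB}$ is the left endpoint; so I only need $\mu_-^{\tUB} \le \lambda_-(\beta_E^{(0)},\alpha_E^{(1)},\alpha_R^{(1)})$ and $\mu_-^{\tLB} \le \lambda_-(\alpha_E^{(0)},\beta_E^{(1)},\beta_R^{(1)})$ so that $[\mu_-^{\tLB},\mu_-^{\tUB}] \cup \widehat{\mathcal I}_-$ collapses to $[\mu_-^{\tLB}, \lambda_-(\beta_E^{(0)},\alpha_E^{(1)},\alpha_R^{(1)})]$. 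Both follow from the strict ordering $\mu_a < \lambda_-$ that holds pointwise in the parameters (established in the discussion right after \Cref{Theo:pi}): applying it at the UB-arguments gives $\mu_-^{\tUB} = \mu_a(\beta_E^{(0)},\alpha_E^{(1)},\alpha_R^{(1)},\beta_E^{(2)},\alpha_R^{(2)}) < \lambda_-(\beta_E^{(0)},\alpha_E^{(1)},\alpha_R^{(1)})$ since $\lambda_-$ does not depend on the $\gamma^{(2)}$-indicators, and similarly at the LB-arguments. Putting the two parts together yields the claimed two-interval enclosure.

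The main obstacle I anticipate is bookkeeping the extremal parameter choices: one must verify that the specific arguments hard-wired into $\mu_-^{\tLB}, \mu_-^{\tUB}, \mu_+^{\tLB}, \mu_+^{\tUB}$ in \Cref{Lemma:pi} are genuinely compatible with the arguments in $\widehat{\mathcal I}_\pm$ and $\widehat{\mathcal I}_{E_0}$ — in particular that no $\gamma^{(2)}$-dependence spoils the comparison with $\lambda_\pm$, and that the $\min$ in the definition of $\mu_+^{\tLB}$ is exactly what is needed to swallow $\alpha_E^{(0)}$. Everything else is a short monotonicity-and-ordering argument that reuses facts already proved for \Cref{Theo:p}, \Cref{Lem:optp}, the earlier \Cref{Corollary}, and \Cref{Lemma:pi}.
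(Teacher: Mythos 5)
Your proposal is correct and uses essentially the same ingredients as the paper's (very terse) proof: Theorem \ref{Theo:pi}, Lemma \ref{Lemma:pi}, and the pointwise root ordering $\mu_a < \lambda_- < \mu_b < \lambda_+ < \mu_c$ established just after Theorem \ref{Theo:pi}. The paper simply states ``$\pi(\lambda_-) > 0$ implies $\lambda_- > \mu_a$'' and leaves the rest to the reader; you spell out the interval-merging bookkeeping on both the positive and negative sides, which is the right way to fill in the details. One small inaccuracy: you assert that ``$\alpha_E^{(0)}$ is never smaller than $\mu_b$ evaluated at its LB-optimal arguments''; this is not generally true (and indeed Lemma \ref{Lemma:pi}'s proof explicitly distinguishes the case $\gamma_E^{(0)} \le \mu_b$), but this claim is unnecessary for your argument---all you actually use is $\mu_+^{\tLB} \le \alpha_E^{(0)}$, which holds trivially by the $\min$ in the definition. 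Aside from that redundant side remark, the argument is sound.
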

	 \begin{proof}
		 We recall that
		 $\pi(\lambda_-) > 0$, which implies that $\lambda_- > \mu_a$.
		 The statement then follows from \Cref{Theo:pi} and \Cref{Lemma:pi}.
	 \end{proof}
	 %\textcolor{magenta}
	 {
	 \begin{Remark}
		 Some of the bounds in Corollary \ref{corInexactDouble} refine those in \cite[Thm. 2.2]{Bradley}. In particular, it turns
		 out that $\mu_-^{\tLB}$ and $\mu_+^{\tUB}$ are  exactly as in \cite[Eq. (2.3)]{Bradley}, while
		 \begin{align*}  \lambda_-(\beta_E^{(0)}, \alpha_E^{(1)}, \alpha_R^{(1)}) &\le \lambda_-(\beta_E^{(0)}, 0, \alpha_R^{(1)})
		 = \frac{\beta_E^{(0)}-\sqrt{{\beta_E^{(0)}}^2 + 4 \alpha_R^{(1)}}}{2}, \\
                  \mu_+^{\tLB}& = \min\left\{\alpha_E^{(0)}, \mu_b(\alpha_E^{(0)}, \beta_E^{(1)}, \beta_R^{(1)}, \alpha_E^{(2)}, \alpha_R^{(2)})\right\} \\
			 &\ge  \min\left\{\alpha_E^{(0)}, \mu_b(\alpha_E^{(0)}, \beta_E^{(1)}, \beta_R^{(1)}, 0, \alpha_R^{(2)})\right\} = \mu_b(\alpha_E^{(0)}, \beta_E^{(1)}, \beta_R^{(1)}, 0, \alpha_R^{(2)})
			 =: p^+_{\min},
		 \end{align*}
         where $p^+_{\min}$ is the notation of \cite{Bradley} for the lower bound of the positive eigenvalues. It turns out that $\mu_+^{\tLB}$ can be significantly larger than $p^+_{\min}$. For instance, considering the parameter selection $\alpha_E^{(0)} = 0.01$, $\beta_E^{(1)} = 0.1$, $\beta_R^{(1)} = 2$, $\alpha_E^{(2)} = 0.1$, $\alpha_R^{(2)} = 10^{-3}$, 
		% \begin{center}
		% \begin{tabular}{cccccccccc}
		%	 $\alpha_E^{(0)}$ &
		%	 $\beta_E^{(0)}$ &
		%	 $\alpha_E^{(1)}$ &
		%	 $\beta_E^{(1)}$ &
		%	 $\alpha_E^{(2)}$ &
		%	 $\beta_E^{(2)}$ &
		%	 $\alpha_R^{(1)}$ &
		%	 $\beta_R^{(1)}$ &
		%	 $\alpha_R^{(2)}$ &
		%	 $\beta_R^{(2)}$  \\
		%	 0.01 &    2 &    $10^{-3}$   &  0.1   &  0.1  &  1   &  0.01   &  2   &  $10^{-3}$   &  3 \\
		% \end{tabular}
		% \end{center}
		 one would obtain $\mu_+^{\tLB} = 0.01$,  whereas $p^+_{\min} \approx 5 \times 10^{-6}$.
	 \end{Remark}
	 }

\subsection{Numerical experiments: Randomly-generated matrices}

We now undertake numerical tests to validate the theoretical results of Section \ref{sec:DSPAnalysis}. For the first test, we ascertain the extremal eigenvalues of $\widehat{\mathcal{P}}_D^{-1} \mathcal{A}$, as compared to the bounds developed in Section \ref{sec:DSPAnalysis}, on randomly-generated linear systems. Specifically, we run $3^6 = 729$ different synthetic test cases in the simplified case with $E_1, E_2 \equiv 0$, combining the values of the extremal eigenvalues of the symmetric positive definite matrices involved in the previous discussion, reported in \Cref{TabPar}. Each test case has been run 25 times, generating random matrices that satisfy the relevant spectral properties, and we record the most extreme eigenvalues for each test case.

\begin{table}[h!]
\begin{center}
\begin{tabular}{|c||rrl|}
\hline
% $\alpha_E^{(0)}$ & 0.1 & 0.3 & 0.9 \\
% $\beta_E^{(0)}$ & 1.2 & 1.5 & 1.99\\
% $\alpha_R^{(i)}$ & 0.1 & 0.3 & 0.9 \\
% $\beta_R^{(i)}$ & 1.2 & 1.8 & 5 \\
$\alpha_E^{(0)}$, $\alpha_R^{(1)}$, $\alpha_R^{(2)}$ & 0.1 & 0.3 & 0.9 \\
$\beta_E^{(0)}$, $\beta_R^{(1)}$, $\beta_R^{(2)}$ & 1.2 & 1.8 & 5 \\
\hline
\end{tabular}
\end{center}
\caption{Extremal eigenvalues of the relevant symmetric positive definite matrices used in the verification of the bounds.}
\label{TabPar}
\end{table}

\begin{figure}[h!]
\begin{center}
\hspace{-5mm}
\includegraphics[width=7cm]{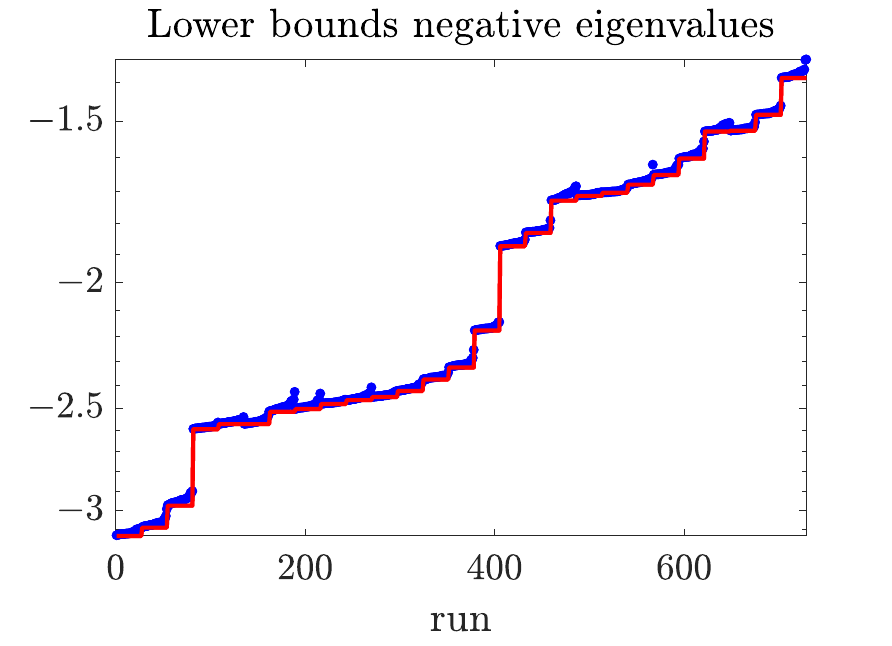}
\hspace{-5mm}
\includegraphics[width=7cm]{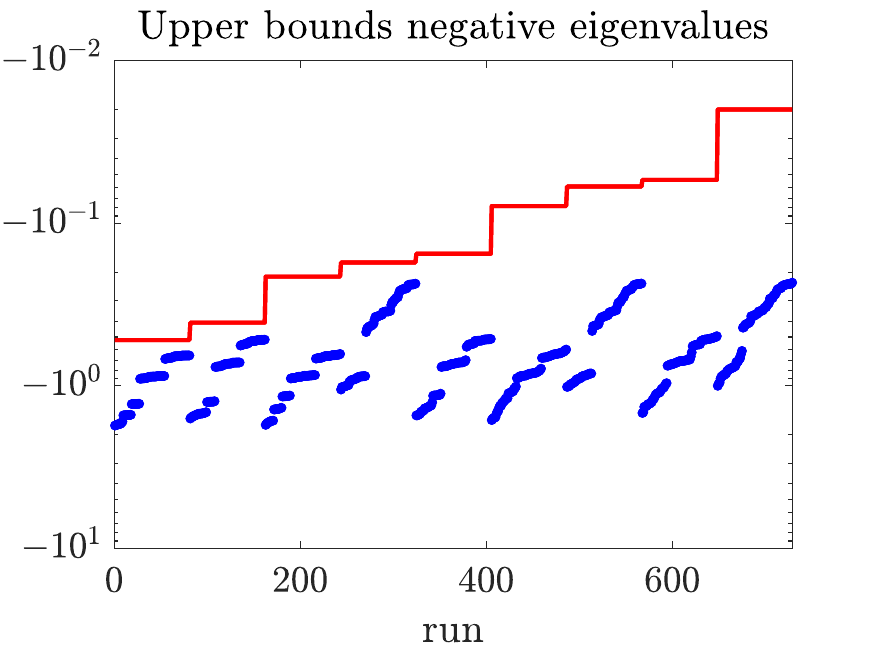}
\hspace{-5mm}
\includegraphics[width=7cm]{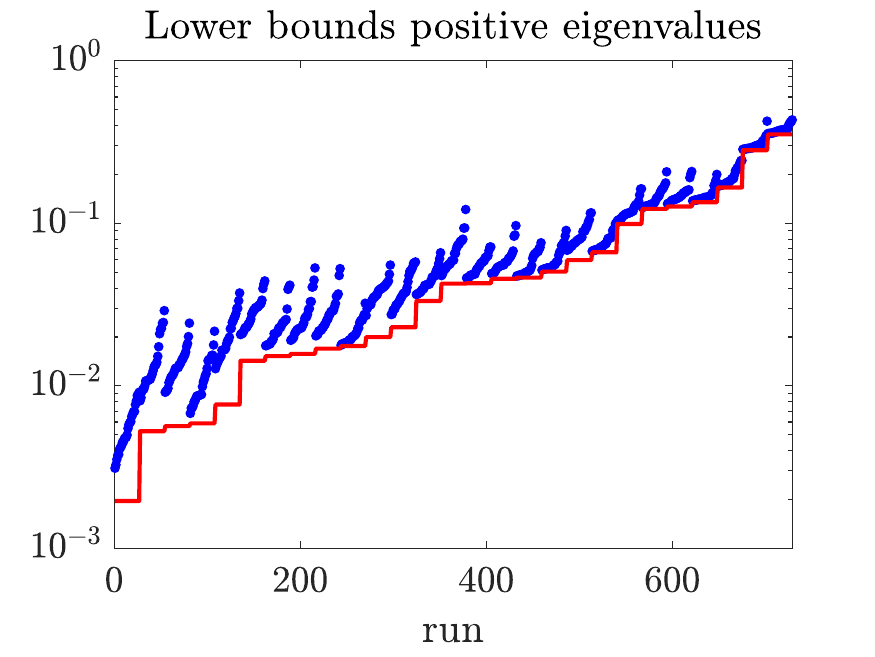}
\hspace{-5mm}
\includegraphics[width=7cm]{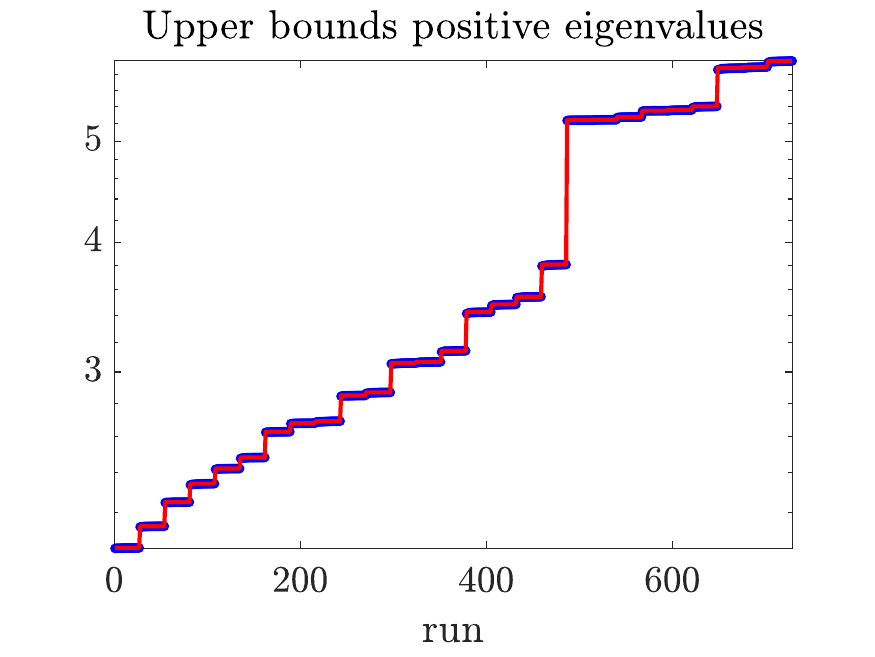}
\end{center}
\caption{Extremal eigenvalues of the preconditioned matrix (blue dots) and bounds (red line) after 25 runs with each combination of the parameters from \Cref{TabPar}.}
\label{eigvsbounds}
\end{figure}

In more detail, the dimensions $n_0$, $n_1$, and $n_2$ are computed using the closest integer to \texttt{50+10*rand}, using {\scshape Matlab}'s \texttt{rand} function, re-computing as necessary to ensure that $n_0 \geq n_1 \geq n_2$. The matrices $A_k$ and $B_k$ are computed using {\scshape Matlab}'s \texttt{randn} function, whereupon we take the symmetric part of $A_k$ and then add an identity matrix multiplied by the absolute value of the smallest eigenvalue (multiplied by 1.01 if $k=0$), to ensure symmetric positive semi-definiteness (definiteness if $k=0$). We then choose $\widehat{S}_0$ as a linear combination of $A$ and the identity matrix, such that the eigenvalues of $E_0$ are contained in $[\alpha_E^{(0)}, \beta_E^{(0)}]$, and similarly to construct $\widehat{S}_1$ and $\widehat{S}_2$. In \Cref{eigvsbounds} we present the (ordered) computed extremal eigenvalues and theoretical bounds. We notice that the plots indicate (for these problems) that three out of four bounds capture the behaviour of the eigenvalues very well, while only the upper bounds on the negative eigenvalues are not as tight. % These will be improved in Sec. [[4]] with an additional hypothesis on the sizes of the matrices involved.

\subsection{Numerical experiments: PDE-constrained optimization}

\begin{table}
\centering
\caption{Computed eigenvalues of $\widehat{\mathcal{P}}_D^{-1} \mathcal{A}$ (denoted `\text{Comp}') and theoretical bounds (denoted `\text{Bound}'), for PDE-constrained optimization problem with $h = 2^{-6}$, $\beta \in \{ 1, 10^{-3} \}$, and a range of Chebyshev semi-iterations \texttt{Cheb}.}\label{ResultsTable2}
%\caption{Computed eigenvalues of $\widehat{\mathcal{P}}_D^{-1} \mathcal{A}$ and theoretical bounds, for full observation problem with $h = 2^{-6}$, $\beta \in \{ 1, 10^{-3} \}$, and a range of Chebyshev semi-iterations \texttt{Cheb}.}\label{ResultsTable2}
\begin{tabular}{|c||c|c||c|c||c|c||c|c|}
\hline
 & \multicolumn{8}{c|}{$\beta = 1$} \\ \cline{2-9}
\texttt{Cheb} & $\text{Bound}_l^-$ & $\text{Comp}_l^-$ & $\text{Comp}_u^-$ & $\text{Bound}_u^-$ & $\text{Bound}_l^+$ & $\text{Comp}_l^+$ & $\text{Comp}_u^+$ & $\text{Bound}_u^+$ \\ \hline \hline
1 & $-1.9288$ & $-1.7075$ & $-0.7087$ & $-0.0944$ & 0.0537 & 0.2688 & 4.4527 & 4.4527 \\ \hline
3 & $-1.3239$ & $-1.3065$ & $-0.8286$ & $-0.5335$ & 0.3751 & 0.4294 & 4.2769 & 4.2769 \\ \hline
5 & $-1.2554$ & $-1.2537$ & $-0.7795$ & $-0.6084$ & 0.4370 & 0.4434 & 4.2582 & 4.2582 \\ \hline
10 & $-1.2470$ & $-1.2470$ & $-0.7733$ & $-0.6180$ & 0.4450 & 0.4450 & 4.2559 & 4.2559 \\ \hline
\end{tabular}

\begin{tabular}{|c||c|c||c|c||c|c||c|c|}
\hline
 & \multicolumn{8}{c|}{$\beta = 10^{-3}$} \\ \cline{2-9}
\texttt{Cheb} & $\text{Bound}_l^-$ & $\text{Comp}_l^-$ & $\text{Comp}_u^-$ & $\text{Bound}_u^-$ & $\text{Bound}_l^+$ & $\text{Comp}_l^+$ & $\text{Comp}_u^+$ & $\text{Bound}_u^+$ \\ \hline \hline
1 & $-1.9288$ & $-1.7070$ & $-0.7087$ & $-0.0944$ & 0.0537 & 0.2688 & 4002.7 & 4002.7 \\ \hline
3 & $-1.3239$ & $-1.3065$ & $-0.6639$ & $-0.5335$ & 0.3751 & 0.4294 & 4002.7 & 4002.7 \\ \hline
5 & $-1.2554$ & $-1.2537$ & $-0.6233$ & $-0.6084$ & 0.4370 & 0.4434 & 4002.7 & 4002.7 \\ \hline
10 & $-1.2470$ & $-1.2470$ & $-0.6182$ & $-0.6180$ & 0.4450 & 0.4450 & 4002.7 & 4002.7 \\ \hline
\end{tabular}
\end{table}

\begin{table}
\centering
\caption{Number of {\scshape Minres} iterations required for solution of PDE-constrained optimization problem, for a range of Chebyshev semi-iterations \texttt{Cheb}.}\label{ResultsTable3}
\begin{tabular}{|c||c|c|c|c|}
\hline
$h$ & $2^{-5}$ & $2^{-5}$ & $2^{-6}$ & $2^{-6}$ \\ \hline
$\texttt{Cheb} \backslash \beta$ & $~1~$ & $~10^{-3}~$ & $~1~$ & $~10^{-3}~$ \\ \hline \hline
1 & 104 & 135 & 104 & 138 \\ \hline
3 & 31 & 75 & 31 & 74 \\ \hline
5 & 21 & 72 & 21 & 68 \\ \hline
10 & 16 & 64 & 15 & 58 \\ \hline
\end{tabular}
\end{table}

To test our theoretical results of Section \ref{sec:DSPAnalysis} on a more practical PDE-based setting, we consider a PDE-constrained optimization example, and present the theoretical and computed, extremal (negative and positive) eigenvalues of $\widehat{\mathcal{P}}_D^{-1} \mathcal{A}$. Following literature such as \cite[Sec. 5]{BMPP_COAP24}, \cite{MNN}, and \cite[Sec. 4.1]{pearson2023symmetric}, we examine a problem of the form
\begin{equation*}
\min_{y,u} \quad \frac{1}{2}\left\|y-\widehat{y}\right\|_{L^2(\partial\Omega)}^2+\frac{\beta}{2}\left\|u\right\|_{L^2(\Omega)}^2 \qquad \text{s.t.} \quad \left\{\begin{array}{rl}
-\Delta y+y+u=0 & \text{in }\Omega, \\
\frac{\partial y}{\partial n}=0 & \text{on }\partial\Omega, \\
\end{array}\right.
\end{equation*}
where $\Omega := (0,1)^2$ with boundary $\partial\Omega$, $y$ and $u$ are \emph{state} and \emph{control variables}, $\widehat{y}$ is a specified \emph{desired state}, and $\beta>0$ is a regularization parameter. For these tests, the desired state is obtained from solving the forward problem with `true' control $4x_1(1-x_1)+x_2$, with $x_1$ and $x_2$ the spatial coordinates. We discretize these problems using P1 finite elements, to obtain a linear system
\begin{equation}\label{PDECO_system}
\left[\begin{array}{ccc}
\beta M & M & 0 \\ M & 0 & L \\ 0 & L & M_{\partial\Omega} \\
\end{array}\right]\left[\begin{array}{c}
u_h \\ p_h \\ y_h \\
\end{array}\right]=\left[\begin{array}{c}
0 \\ 0 \\ \widehat{y}_h \\
\end{array}\right],
\end{equation}
with $y_h$, $u_h$, $p_h$, and $\widehat{y}_h$ the discretized state, control, \emph{adjoint}, and desired state. The matrix $M$ is a finite element mass matrix, $L$ is the sum of finite element stiffness and mass matrices, and $M_{\partial\Omega}$ a boundary mass matrix.

Consistent with the structure of $\mathcal{P}_D$ above, we precondition the system \eqref{PDECO_system} with
\begin{equation*}
\widehat{\mathcal{P}}_D = \left[\begin{array}{ccc}
\beta \widehat{M} & 0 & 0 \\ 0 & \frac{1}{\beta} \widehat{M} & 0 \\ 0 & 0 & \beta L M^{-1} L \\
\end{array}\right],
\end{equation*}
where $\widehat{M}$ denotes a number of iterations of Chebyshev semi-iteration applied to $M$ (see \cite{GVI,GVII,WathenRees}) with Jacobi splitting. We vary the number of inner iterations, denoted \texttt{Cheb}, in our experiments. For the purposes of these results, we apply the inverse of $S_2$ ``exactly'' using a direct solver, but could easily apply a multigrid method (or similar) instead. As in \cite[Sec. 5]{BMPP_COAP24}, our theoretical bounds may be obtained using known results:
\begin{equation*}
[\alpha_E^{(0)},\beta_E^{(0)}] \in [1-\omega, 1+\omega], \qquad \text{where }~\omega = 1\left/T_{\texttt{Cheb}} \left( \frac{\lambda_{\max}^M+\lambda_{\min}^M}{\lambda_{\max}^M-\lambda_{\min}^M} \right)\right. \equiv 1\left/T_{\texttt{Cheb}} \left( \frac{5}{3} \right),\right.
\end{equation*}
with $T_{\texttt{Cheb}}$ the $\texttt{Cheb}$-th Chebyshev polynomial, and $\lambda_{\min}^M$ and $\lambda_{\max}^M$ the minimum and maximum eigenvalues of the mass matrix preconditioned by its diagonal, in this case $\frac{1}{2}$ and $2$ respectively \cite{Wathen87}. From this, we may take the bounds $\alpha_R^{(1)} = (\alpha_E^{(0)})^2$, $\beta_R^{(1)} = (\beta_E^{(0)})^2$, $\alpha_R^{(2)} = \alpha_E^{(0)}$, and $\beta_R^{(2)} = \beta_E^{(0)}$. Further, it holds that $\alpha_E^{(1)} = \beta_E^{(1)} = \alpha_E^{(2)} \equiv 0$, and we compute the value of $\beta_E^{(2)}$ directly.

In Table \ref{ResultsTable2} we show the extremal computed negative and positive eigenvalues ($\text{Comp}_l^-$, $\text{Comp}_u^-$, $\text{Comp}_l^+$, $\text{Comp}_u^+$) of $\widehat{\mathcal{P}}_D^{-1} \mathcal{A}$ with mesh parameter $h = 2^{-6}$ and $\beta \in \{ 1, 10^{-3} \}$, with different numbers of Chebyshev semi-iterations applied to approximate $M^{-1}$. We also provide the analytic bounds ($\text{Bound}_l^-$, $\text{Bound}_u^-$, $\text{Bound}_l^+$, $\text{Bound}_u^+$), obtained using the above theory. We also ran the analogous results for $h = 2^{-5}$, but do not present these as they are very similar to those for $h = 2^{-6}$. In Table \ref{ResultsTable3} we present the {\scshape Minres} \cite{minres} iteration numbers required to solve the systems from all problem settings, with $h = 2^{-5}$ and $h = 2^{-6}$, to a relative tolerance of $10^{-10}$. The dimensions of the systems for $h = 2^{-5}$ and $h = 2^{-6}$ are 3267 and 12675, respectively.

Table \ref{ResultsTable2} demonstrates that the theoretical bounds of this section are highly illustrative for a practical, PDE-based example. As also observed in Figure \ref{eigvsbounds}, the weakest bound is that for the negative eigenvalue of smallest magnitude, but as long as $\texttt{Cheb}$ is larger than $1$ this is still of the correct order of magnitude. As expected, as the quality of the approximation of $M$ improves through increasing \texttt{Cheb}, the more precise are the eigenvalue bounds. This also manifests itself in a reduction of iteration numbers in Table \ref{ResultsTable3}. The iteration numbers of Table \ref{ResultsTable3} also reflect the eigenvalue bounds of Table \ref{ResultsTable2} in that these increase for smaller $\beta$ due to $\text{Comp}_u^+$ being significantly larger, and they are robust (indeed, almost independent) with respect to $h$.

\section{Conclusion} \label{sec:conc}

We analyzed spectral bounds for the application of block diagonal Schur complement preconditioners to symmetric multiple saddle-point systems. The analysis is based on recursively defined polynomials, the zeros of which coincide with the eigenvalues of interest. We derived bounds on the positive and negative zeros, showing that the extremal eigenvalues are bounded from above by two in modulus independently of $N$, whereas the eigenvalues closest to zero converge to zero for $N \to \infty$. We provided a general perturbation result for the approximate application of the preconditioners based on a backward analysis argument and practical bounds based on field-of-value indicators for double saddle-point systems. Numerical experiments complement our theoretical findings and demonstrate that the bounds are in close agreement with the numerically-determined eigenvalues.

    %\hspace*{-12mm}
%%	\newpage

\section*{Acknowledgements}
We would like to thank Michele Bergamaschi for key discussions on the proof of Theorem \ref{Theo1}.
AM acknowledges support of the INdAM-GNCS Project CUP\underline{ }E53C23001670001. The work of AM was carried out within the PNRR research activities of the consortium iNEST (Interconnected Nord-Est Innovation Ecosystem), funded by the European Union Next-GenerationEU
%(%Piano Nazionale di Ripresa e Resilienza
PNRR -- Missione 4 Componente 2, Investimento 1.5 -- D.D. 1058 23/06/2022, ECS$\_$00000043).
This manuscript reflects only the Authors' views and opinions, neither the European Union nor the European Commission can be considered responsible for them.
JWP acknowledges support of the Engineering and Physical Sciences (EPSRC) UK grant EP/S027785/1. 
AP acknowledges support of the German Federal Ministry of Education and Research under grant 05M22VHA.

\bibliographystyle{siam}
\bibliography{BDP}
\end{document}